
\documentclass[reqno]{amsart}

\usepackage{amssymb}
\usepackage{mathtools}
\usepackage{a4wide,amsmath}
\usepackage{mathrsfs}
\usepackage{amsthm}
\numberwithin{equation}{section}
\numberwithin{figure}{section}
\numberwithin{table}{section}
\usepackage{bbm}
\usepackage{enumerate}
\usepackage[utf8]{inputenc}
\usepackage{hyperref}
\hypersetup{hidelinks}
\usepackage[textsize=tiny]{todonotes} \setlength{\marginparwidth}{2cm}

\long\def\MSC#1\EndMSC{\def\arg{#1}\ifx\arg\empty\relax\else
     {\narrower\noindent%
{2010 Mathematics Subject Classification}: #1\\} \fi}
\long\def\PACS#1\EndPACS{\def\arg{#1}\ifx\arg\empty\relax\else
     {\narrower\noindent%
{PACS numbers}: #1}\fi}
\long\def\KEY#1\EndKEY{\def\arg{#1}\ifx\arg\empty\relax\else
	{\narrower\noindent%
Keywords: #1\\}\fi}


%
%
\theoremstyle{plain}
\newtheorem{theorem}{Theorem}[section]
\newtheorem{lemma}[theorem]{Lemma}
\newtheorem{proposition}[theorem]{Proposition}
\newtheorem{corollary}[theorem]{Corollary}
\theoremstyle{definition}
\newtheorem{definition}[theorem]{Definition}
\theoremstyle{remark}
\newtheorem{remark}[theorem]{Remark}
\newtheorem{example}[theorem]{Example}

\newcommand{\N}{\mathbb{N}}
\newcommand{\R}{\mathbb{R}}
\newcommand{\C}{\mathbb{C}}

\DeclareMathOperator*{\essinf}{ess\,inf}
\DeclareMathOperator*{\esssup}{ess\,sup}

\begin{document}

\title[On regularity of the logarithmic forward map of EIT]{On regularity of the logarithmic forward map of electrical impedance tomography}

\author[H.~Garde]{Henrik Garde}
\address[H.~Garde]{Department of Mathematical Sciences, Aalborg University, Skjernvej 4A, 9220 Aalborg, Denmark.}
\email{henrik@math.aau.dk}

\author[N.~Hyv\"onen]{Nuutti Hyv\"onen}
\address[N.~Hyv\"onen]{Department of Mathematics and Systems Analysis, Aalto University, P.O. Box~11100, 02150 Espoo, Finland.}
\email{nuutti.hyvonen@aalto.fi}

\author[T.~Kuutela]{Topi Kuutela}
\address[T.~Kuutela]{Department of Mathematics and Systems Analysis, Aalto University, P.O.~Box 11100, 02150 Espoo, Finland.}
\email{topi.kuutela@aalto.fi}

\begin{abstract}
 This work considers properties of the logarithm of the Neumann-to-Dirichlet boundary map for the conductivity equation in a Lipschitz domain. It is shown that the mapping from the (logarithm of) the conductivity, i.e.~the (logarithm of) the coefficient in the divergence term of the studied elliptic partial differential equation, to the logarithm of the Neumann-to-Dirichlet map is continuously Fr\'echet differentiable between natural topologies. Moreover, for any essentially bounded perturbation of the conductivity, the Fr\'echet derivative defines a bounded linear operator on the space of square integrable functions living on the domain boundary, although the logarithm of the Neumann-to-Dirichlet map itself is unbounded in that topology. In particular, it follows from the fundamental theorem of calculus that the difference between the logarithms of any two Neumann-to-Dirichlet maps is always bounded on the space of square integrable functions. All aforementioned results also hold if the Neumann-to-Dirichlet boundary map is replaced by its inverse,~i.e.~the Dirichlet-to-Neumann map.
\end{abstract}

\maketitle

\KEY
Neumann-to-Dirichlet map, 
Fr\'echet derivative, 
logarithm, 
functional calculus, 
electrical impedance tomography.
\EndKEY

\MSC
35J15, 
46T20, 
47A60, 
35R30. 
\EndMSC

\section{Introduction}
This work is motivated by {\em electrical impedance tomography} (EIT),~i.e.,~the imaging modality whose aim is to reconstruct (useful information about) the conductivity inside a physical body from boundary measurements of current and voltage. The idealized mathematical model for EIT is to determine the strictly positive and bounded coefficient $\sigma: \Omega \to \R$ in the elliptic conductivity equation
\begin{equation}
\label{eq:intro}
\nabla \cdot (\sigma \nabla u) = 0 \qquad \text{in } \Omega
\end{equation}
from the Cauchy data of all its solutions on the boundary of the domain $\Omega \subset \R^d$, $d \geq 2$. In this paper, we employ this ideal model and assume the available measurement is the {\em Neumann-to-Dirichlet} (ND) boundary map associated to~\eqref{eq:intro}, although all practical setups for EIT actually employ a finite number of contact electrodes, resulting in a finite-dimensional measurement (cf.~\cite{Cheng89,Somersalo92}). However, it would also be possible to formulate our main ideas for realistic electrode models~(cf.~\cite{Hyvonen18}). For more information on practical EIT as well as on the related theoretical uniqueness and stability results, we refer to the review papers~\cite{Borcea02, Borcea2002,Cheney99,Uhlmann09} and the references therein.

The reconstruction task of EIT, as any other nonlinear inverse problem, can be straightforwardly tackled via regularized least squares minimization, that is, by iteratively linearizing the dependence of the data on the unknown and solving the resulting illposed linear problems by resorting to a suitable regularization technique. In EIT, it is also possible to obtain useful information about the unknown by only taking a single linearization step~\cite{Adler09,Cheney90,Harrach10}. The success of such straightforward approaches definitely depends on the degree of nonlinearity in the forward map that takes the unknown to the data,~i.e.~on the linearization error. On the other hand, the nonlinearity of the forward map can be altered by choosing different parametrizations for the unknown and the data.

Such an idea was tested for EIT in \cite{Hyvonen18}, where it was numerically demonstrated that the {\em completely logarithmic forward map} of EIT, taking the logarithm of the conductivity to the logarithm of the ND map, is significantly less nonlinear than, say, the standard forward map that sends the conductivity to the ND map itself.
To be slightly more precise, the mean relative linearization errors around the unit conductivity were computed over certain random samples of 50\,000 conductivities in the unit disk with different parametrizations for the forward map of EIT, and these mean errors were found to be approximately an order of magnitude smaller for the completely logarithmic forward map than for the standard one. This lower degree of nonlinearity was also observed with the complete electrode model (see \cite{Cheng89,Somersalo92}) as well as in the mean $L^2(\Omega)$ reconstruction errors for a simple one-step reconstruction algorithm. What is more, the `almost linearity' of the completely logarithmic forward map can actually be explicitly characterized in some simple geometries~\cite[Examples~2 \& 3]{Hyvonen18}. It should be noted, however, that some other transformation  could well lead to an even more advantageous parametrization for the forward map of EIT.

The studies in~\cite{Hyvonen18} were mainly based on finite-dimensional numerical approximations.
In particular, the Fr\'echet differentiability of the infinite-dimensional completely logarithmic forward map of EIT was not established and no actual mathematical proof for its low degree of nonlinearity was presented. The main goal of this work is to fix the first of these two imperfections.
To be more precise, we prove that the completely logarithmic forward map is continuously Fr\'echet differentiable from $L^\infty(\Omega)$ to the space of bounded linear operators between the mean-free Sobolev spaces $H^{\epsilon}_\diamond(\partial \Omega)$ and $H^{-\epsilon}_\diamond(\partial \Omega)$ for any $\epsilon > 0$. This is not an obvious result because the eigenvalues of a ND map accumulate at the origin and those of its logarithm at minus infinity.

Although it is natural to consider the logarithm of an ND map as an operator from $H^{\epsilon}_\diamond(\partial \Omega)$ to $H^{-\epsilon}_\diamond(\partial \Omega)$ because it is not bounded on the space of mean-free square integrable functions $L^2_\diamond(\partial \Omega)$, it turns out that the corresponding Fr\'echet derivative is more regular and defines a bounded linear operator on $L^2_\diamond(\partial\Omega)$ for any essentially bounded perturbation of the (log-)conductivity. In particular, it follows from the fundamental theorem of calculus that the difference between the logarithms of any two ND maps is always bounded on  $L^2_\diamond(\partial\Omega)$.
We want to emphasize that this slight increase in regularity when taking the difference
holds for any two log-conductivities in $L^\infty(\Omega)$ without any extra assumptions on their (common) behavior at or close to $\partial \Omega$; such assumptions are needed for the difference of two ND maps to exhibit higher regularity than either of the maps on their own (cf.,~e.g.,~\cite{Lee1989}). Loosely speaking, the `singularity' preventing the logarithms of ND maps from mapping $L^2_\diamond(\partial\Omega)$ to itself is the same for all conductivities, and it thus disappears when subtracting any two of such logarithms.

We present all our results for the ND map as it is more suitable for numerical studies than its inverse, the {\em Dirichlet-to-Neumann} (DN) map. However, our main theorems could as well be formulated for the DN map because the logarithms of the ND and DN maps for a given conductivity only differ by a change of sign.

This article is organized as follows. This introduction is first completed by reviewing the employed notation and terminology. The mathematical setting is formally introduced and the main results are formulated in Section~\ref{sec:setting}. Subsequently, Section~\ref{sec:shifted} moves the spectrum of the ND map by $\tau > 0$ to the right from the origin and proves Fr\'echet differentiability and other auxiliary results for the associated shifted logarithmic isomorphism. Finally, the main results are proven in Section~\ref{sec:convergence} by letting $\tau>0$ tend to zero in a controlled manner. The paper is concluded with two appendices considering the Fr\'echet derivatives of the ND map and equivalent norms for the mean-free Sobolev spaces $H^{r}_\diamond(\partial \Omega)$ with $r \in [-\tfrac{1}{2}, \tfrac{1}{2}]$. These equivalent norms form an essential tool for our analysis.

\subsection{Notation and terminology} \label{sec:notation}

We denote by $\mathscr{L}(X,Y)$ the space of bounded linear operators between Banach spaces $X$ and $Y$, and introduce the shorthand notation $\mathscr{L}(X) := \mathscr{L}(X,X)$. If $X$ is a Hilbert space, then $\mathscr{L}_{\rm sa}(X) \subset \mathscr{L}(X)$ denotes the closed subspace consisting of the bounded self-adjoint operators. 

Let $\Omega \subset \R^d$, $d \geq 2$, be a bounded domain with a Lipschitz boundary $\partial \Omega$. The bracket $\langle \cdot, \cdot \rangle \colon H^{-r}(\partial \Omega)\times H^{r}(\partial \Omega) \to \C$ denotes the \emph{sesquilinear} dual pairing on $\partial \Omega$ with an interpretation as an extension of the $L^2(\partial \Omega)$ inner product. 

For $r\in[-\tfrac{1}{2},\tfrac{1}{2}]$, we define the mean-free subspaces of $H^r(\partial\Omega)$ as
\begin{equation*}
H_\diamond^{r}(\partial \Omega) := \left\{ v \in H^{r}(\partial \Omega) \, | \,  \langle 1, v\rangle = 0 \right\}.
\end{equation*}
Since $H_\diamond^r(\partial\Omega) \subset H^r(\partial\Omega)$, it follows that $H^{-r}_\diamond(\partial\Omega) \subset H^{-r}(\partial\Omega) =  (H^r(\partial\Omega))' \subseteq (H_\diamond^r(\partial\Omega))'$, where the latter inclusion is not an embedding as it is not injective. On the other hand, if $f\in (H^r_\diamond(\partial\Omega))'$, then we may define its extension by zero via
\begin{equation*}
\langle\tilde{f},g\rangle = \begin{cases}
\langle f,g\rangle, & \quad g\in H^r_\diamond(\partial\Omega), \\[1mm]
0, & \quad g\in \textup{span}(1).
\end{cases}
\end{equation*} 
Obviously, $\tilde{f}\in (H^r(\partial\Omega))' = H^{-r}(\partial\Omega)$ is well defined and satisfies $\langle \tilde{f},1 \rangle = 0$, i.e., $\tilde{f} \in H^{-r}_{\diamond}(\partial\Omega)$. Identifying $f$ with $\tilde{f}$ gives a natural isometric isomorphism between $(H^r_\diamond(\partial\Omega))'$ and $H^{-r}_\diamond(\partial\Omega)$.

We denote by $T^*\in \mathscr{L}(H_\diamond^{-r}(\partial\Omega),H_\diamond^r(\partial\Omega))$ the unique \emph{dual operator} of a bounded linear map $T\in \mathscr{L}(H_\diamond^{-r}(\partial\Omega),H_\diamond^r(\partial\Omega))$ with respect to the dual bracket. In particular, for all $f,g\in H_\diamond^{-r}(\partial\Omega)$ it holds
\begin{equation*}
\langle f,Tg \rangle = \overline{\langle g, T^* f\rangle}.
\end{equation*}
If $T=T^*$, we call $T$ \emph{symmetric} with respect to the dual bracket.

\section{The setting and main results}
\label{sec:setting}
In this section, we first recall the definition of the ND map for the conductivity equation together with some of its basic properties. Subsequently, we define the logarithmic ND operator, introduce the completely logarithmic forward map of EIT, and finally state the main results of this work.

\subsection{Neumann-to-Dirichlet operator and its derivatives}
\label{sec:param}
Let us consider the Neumann boundary value problem
\begin{equation}
\label{eq:eitcont}
\begin{split}
\nabla \cdot (\sigma \nabla u) &= 0 \qquad \text{in } \Omega, \\
\sigma \frac{\partial u}{\partial \nu} &= f \qquad \text{on } \partial \Omega,
\end{split}
\end{equation}
where $\Omega \subset \R^d$, $d \geq 2$, is a bounded domain with a Lipschitz boundary $\partial \Omega$. The electrical conductivity $\sigma \in L_+^\infty(\Omega)$ is real-valued and isotropic, but $f \in H_\diamond^{-1/2}(\partial \Omega)$ is in general complex-valued.
The conductivity coefficient $\sigma$ is bounded from below by a positive constant, that is,
\begin{equation*}
L_+^\infty(\Omega) := \left\{ \varsigma \in L^\infty(\Omega; \R) \, | \,  \essinf \varsigma > 0  \right\}.
\end{equation*}
Note that apart from $L^\infty(\Omega) := L^\infty(\Omega; \R)$, the multiplier field for all employed function spaces is~$\C$.

The variational form of the Neumann problem \eqref{eq:eitcont} is to find $u \in H^1(\Omega)$ such that
\begin{equation}
\label{eq:varcont}
\int_\Omega \sigma \nabla u \cdot \nabla\overline{v} \, {\rm d} x = \big\langle f, v|_{\partial \Omega} \big\rangle
\end{equation}
holds for all $v \in H^1(\Omega)$.
The standard theory for elliptic partial differential equations reveals that there exists a unique solution to \eqref{eq:varcont} in the quotient space $H^1(\Omega) / \mathbb{C}$  for any given current density $f \in H^{-1/2}_\diamond(\partial \Omega)$. In particular, there is a unique mean-free boundary potential
\begin{equation}
\label{eq:Udef}
U := u|_{\partial \Omega} \in H_\diamond^{1/2}(\partial \Omega)
\end{equation}
that depends linearly and boundedly on the corresponding $f \in H^{-1/2}_\diamond(\partial \Omega)$. To be more precise,
\begin{equation}
\label{eq:Ubnd}
\| U \|_{H^{1/2}(\partial \Omega)} \leq C(\Omega) \| u \|_{H^{1}(\Omega)/\C}
\leq \frac{C(\Omega)}{\essinf{\sigma}} \| f \|_{H^{-1/2}(\partial \Omega)},
\end{equation}
as easily deduced using the Lax--Milgram lemma, trace theorem, and Poincar\'e inequality~\cite{Grisvard85}.

The linear map $f \mapsto U$, which obviously depends on $\sigma$, is called the ND operator and denoted by $\Lambda(\sigma)$. For any given $\sigma \in L_+^\infty(\Omega)$, the mapping
\begin{equation*}
  \Lambda(\sigma) \colon
  \left\{
  \begin{array}{l}
    f \mapsto U, \\[1mm]
    H_\diamond^{-1/2}(\partial \Omega) \to H_\diamond^{1/2}(\partial \Omega)
    \end{array}
    \right.
  \end{equation*}
is a symmetric linear isomorphism. Moreover $\Lambda(\sigma)$ is positive,
\begin{equation*}
\langle f, \Lambda(\sigma) f \rangle \geq c \| f \|_{H^{-1/2}(\partial \Omega)}^2 \qquad {\rm for} \ {\rm all} \ f \in H^{-1/2}_{\diamond}(\partial \Omega) \ {\rm and} \ {\rm some} \ c>0, 
\end{equation*}
as can be easily deduced from \eqref{eq:varcont} and Neumann trace theorems for those elements of $H^1(\Omega)/\mathbb{C}$ for which the range of $\nabla \cdot (\sigma \nabla(\cdot))$ is a subspace of $L^2(\Omega)$ (cf.,~e.g.,~\cite[p.~381, Lemma 1]{Dautray88}).

It follows from \eqref{eq:varcont} that considering $\Lambda(\sigma)$ as an element of $\mathscr{L}(L^2_\diamond(\partial\Omega))$ makes it self-adjoint as well as compact due to the compact and dense embeddings $H_\diamond^{1/2}(\partial \Omega) \hookrightarrow L^2_{\diamond}(\partial \Omega) \hookrightarrow H_\diamond^{-1/2}(\partial \Omega)$,
which inherit their properties from  the standard case $H^{1/2}(\partial \Omega) \hookrightarrow L^2(\partial \Omega) \hookrightarrow H^{-1/2}(\partial \Omega)$,~e.g.,~via taking intersections with $H_\diamond^{-1/2}(\partial \Omega)$.
In particular,  $\Lambda(\sigma)$  admits a spectral decomposition
\begin{equation}
\label{eq:spectral}
\Lambda(\sigma) f = \sum_{k=1}^\infty \lambda_k \langle f, \phi_k\rangle \, \phi_k,
\end{equation}
where the eigenvalues satisfy $\lambda_{k} \geq \lambda_{k+1}$ and $\R_+ \ni \lambda_k \to 0$ as $k \to \infty$, and the corresponding eigenfunctions $\{\phi_k\}_{k=1}^\infty \subset H_\diamond^{1/2}(\partial \Omega)$ form an orthonormal basis for $L_\diamond^2(\partial \Omega)$. Observe that the representation \eqref{eq:spectral} holds for all $f \in H_\diamond^{-1/2}(\partial \Omega)$ by boundedness of $\Lambda(\sigma) \colon H_\diamond^{-1/2}(\partial \Omega) \to H_\diamond^{1/2}(\partial \Omega)$ and the density of the embedding $L^2_{\diamond}(\partial \Omega) \hookrightarrow H_\diamond^{-1/2}(\partial \Omega)$.

The nonlinear mapping
\begin{equation}
\label{eq:lambda}
\Lambda \colon \sigma \mapsto \Lambda(\sigma), \quad L_+^\infty(\Omega) \to \mathscr{L}(H_\diamond^{-1/2}(\partial \Omega), H_\diamond^{1/2}(\partial \Omega))
\end{equation}
is in this work called the \emph{standard forward operator} of EIT. The idealized inverse problem of EIT is to find $\sigma$ from the knowledge of $\Lambda(\sigma)$.  It is well known that the map $\sigma \mapsto \Lambda(\sigma)$ is infinitely times continuously Fr\'echet differentiable. In particular, the first and second derivatives, 
\begin{align*}
D\Lambda(\sigma;\eta) &\colon H_\diamond^{-1/2}(\partial \Omega) \to  H_\diamond^{1/2}(\partial \Omega), \\
D^2\!\Lambda(\sigma;\eta,\xi) &\colon H_\diamond^{-1/2}(\partial \Omega) \to  H_\diamond^{1/2}(\partial \Omega),
\end{align*}
are continuous with respect to $\sigma \in L^\infty_+(\Omega)$, symmetric, and depend (bi)linearly and boundedly on the perturbations $\eta,\xi \in L^\infty(\Omega)$ with respect to the topology of  $\mathscr{L}(H_\diamond^{-1/2}(\partial \Omega), H_\diamond^{1/2}(\partial \Omega))$. To be more precise,
\begin{align}
\label{eq:FrDB}  
\| D\Lambda(\sigma;\, \cdot \,) \|_{\mathscr{L}(L^\infty(\Omega),\mathscr{L}(H_\diamond^{-1/2}(\partial \Omega), H_\diamond^{1/2}(\partial \Omega)))} &\leq \frac{C}{\essinf{\sigma}^2}, \\[1mm]
\label{eq:FrDB2}
\| D^2\!\Lambda(\sigma;\, \cdot \,, \, \cdot \,) \|_{\mathscr{L}(L^\infty(\Omega)^2,\mathscr{L}(H_\diamond^{-1/2}(\partial \Omega), H_\diamond^{1/2}(\partial \Omega)))} &\leq \frac{C}{\essinf{\sigma}^3},
\end{align}
where $C = C(\Omega)>0$ does not depend on $\sigma$. For the sake of completeness, we have included the precise definitions of $D\Lambda(\sigma;\eta)$ and $D^2\!\Lambda(\sigma;\eta,\xi)$ and the proofs of \eqref{eq:FrDB} and \eqref{eq:FrDB2} in Appendix~\ref{app:derivatives}. 

\subsection{Logarithmic forward operator and the main results}
\label{sec:log}

Using the spectral decomposition~\eqref{eq:spectral}, the logarithm of the ND operator can be defined as
\begin{equation}
\label{eq:logspectral}
\log\!\Lambda(\sigma) \colon f \mapsto \sum_{k=1}^\infty \log(\lambda_k) \langle f,\phi_k \rangle \, \phi_k,
\end{equation}
where $\log$ denotes the principal branch of the natural logarithm. As demonstrated in~\cite{Hyvonen18}, if one defines the domain of $\log\!\Lambda(\sigma)$ to be
\begin{equation*}
\mathcal{D}\big(\!\log\!\Lambda(\sigma)\big)  = \bigg\{g \in L^2_\diamond(\partial \Omega) \ \Big| \  
\sum_{k=1}^\infty  \log^2(\lambda_k) |\langle g,\phi_k \rangle|^2 < \infty \bigg\},
\end{equation*}
it becomes a self-adjoint unbounded operator on $L_\diamond^2(\partial \Omega)$ for any $\sigma \in L^\infty_+(\Omega)$. However,  $\log\!\Lambda(\sigma)$ can also be interpreted as a symmetric compact operator
\begin{equation*}
\log\!\Lambda(\sigma): H^\epsilon_\diamond(\partial \Omega)  \to H^{-\epsilon}_\diamond(\partial \Omega), \qquad \epsilon > 0,
\end{equation*}
see \cite[Corollary~1]{Hyvonen18}.

Following \cite{Hyvonen18}, we now introduce the completely logarithmic forward map of EIT.
\begin{definition}
	The {\em completely logarithmic forward map} is defined via
	\begin{equation}
	\label{eq:log_for}
	L:  \kappa \mapsto \log\!\Lambda({\rm e}^\kappa), \quad L^\infty(\Omega) \to \mathscr{L}(H^\epsilon_\diamond(\partial \Omega), H^{-\epsilon}_\diamond(\partial \Omega))
	\end{equation}
	for any fixed $\epsilon >0$.
\end{definition}

Now we are finally ready to present the main results of this work.

\begin{theorem}
	\label{thm:main}
	The completely logarithmic forward map of EIT 
	\begin{equation*}
	L: L^\infty(\Omega) \to \mathscr{L}(H^\epsilon_\diamond(\partial \Omega), H^{-\epsilon}_\diamond(\partial \Omega)), \qquad \epsilon > 0,
	\end{equation*}
	is Fr\'echet differentiable. The corresponding Fr\'echet derivative
	\begin{equation*}
	DL(\kappa; \, \cdot \,) \in \mathscr{L}\big(L^\infty(\Omega), \mathscr{L}(L^2_\diamond(\partial \Omega))\big)  \subset \mathscr{L}\big(L^\infty(\Omega), \mathscr{L}(H^\epsilon_\diamond(\partial \Omega), H^{-\epsilon}_\diamond(\partial \Omega))\big)
	\end{equation*}
	depends continuously on the log-conductivity $\kappa \in L^\infty(\Omega)$. More precisely, for any  $\kappa_1,\kappa_2\in L^\infty(\Omega)$ lying inside the origin-centered ball of radius $R>0$ in the topology of $L^\infty(\Omega)$, it holds
	\begin{equation}
          \label{eq:maincont}
	\| DL(\kappa_2,\, \cdot \,) - DL(\kappa_1,\, \cdot \,) \|_{\mathscr{L}(L^\infty(\Omega),\mathscr{L}(L^2_\diamond(\partial\Omega)))} \leq C \| {\rm e}^{\kappa_2} - {\rm e}^{\kappa_1} \|_{L^\infty(\Omega)},
	\end{equation}
	where $C>0$ only depends on $\Omega$ and $R$.
\end{theorem}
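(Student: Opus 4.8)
\medskip
\noindent\emph{Proof strategy.}\
The plan is to regularize by pushing the spectrum of $\Lambda(\sigma)$ off the origin, to prove every assertion with constants independent of the regularization parameter, and only then to remove the regularization. Fix $\tau>0$ and regard $\Lambda(\sigma)+\tau$ as an element of $\mathscr{L}_{\rm sa}(L^2_\diamond(\partial\Omega))$, whose spectrum lies in the compact subset $[\tau,\|\Lambda(\sigma)\|_{\mathscr{L}(L^2_\diamond)}+\tau]$ of $(0,\infty)$. Since $\sigma\mapsto\Lambda(\sigma)$ is smooth from $L^\infty_+(\Omega)$ into $\mathscr{L}(H^{-1/2}_\diamond,H^{1/2}_\diamond)\subset\mathscr{L}(L^2_\diamond)$ (Appendix~\ref{app:derivatives}), $\kappa\mapsto{\rm e}^\kappa$ is entire from $L^\infty(\Omega)$ into $L^\infty_+(\Omega)$, and $A\mapsto\log A$ is a $C^\infty$ map on the self-adjoint operators with spectrum contained in $(0,\infty)$, the \emph{shifted logarithmic forward map} $L_\tau\colon\kappa\mapsto\log(\Lambda({\rm e}^\kappa)+\tau)$ is of class $C^1$ from $L^\infty(\Omega)$ into $\mathscr{L}_{\rm sa}(L^2_\diamond)\subset\mathscr{L}(H^\epsilon_\diamond,H^{-\epsilon}_\diamond)$. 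Differentiating through the holomorphic functional calculus, using the chain rule, and invoking the identity $\frac{1}{2\pi{\rm i}}\oint_\gamma\log(z)\,(z-\mu)^{-1}(z-\mu')^{-1}\,{\rm d}z=\int_0^\infty(\mu+s)^{-1}(\mu'+s)^{-1}\,{\rm d}s$ for the divided difference of the logarithm, I obtain
\[
DL_\tau(\kappa;\eta)=\int_0^\infty(\Lambda({\rm e}^\kappa)+\tau+s)^{-1}\,D\Lambda({\rm e}^\kappa;{\rm e}^\kappa\eta)\,(\Lambda({\rm e}^\kappa)+\tau+s)^{-1}\,{\rm d}s ,
\]
the integral converging in $\mathscr{L}(L^2_\diamond)$ for every $\tau>0$.

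The crux, and what I expect to be the main obstacle, is to bound $DL_\tau(\kappa;\,\cdot\,)$ in $\mathscr{L}(L^2_\diamond(\partial\Omega))$ \emph{uniformly} in $\tau$: estimating the above integrand by a constant times $(\tau+s)^{-2}$ produces a bound blowing up like $|\log\tau|/\tau^2$, so the cancellation has to be exhibited. Two ingredients combine. First, the energy identity $\langle g,D\Lambda(\sigma;\zeta)f\rangle=-\int_\Omega\zeta\,\nabla u_f\cdot\overline{\nabla u_g}\,{\rm d}x$ for the solutions $u_f,u_g\in H^1(\Omega)$ of \eqref{eq:eitcont}, together with $\|\sigma^{1/2}\nabla u_f\|_{L^2(\Omega)}^2=\langle f,\Lambda(\sigma)f\rangle=\|\Lambda(\sigma)^{1/2}f\|_{L^2_\diamond}^2$, yields the factorization $D\Lambda(\sigma;\zeta)=\Lambda(\sigma)^{1/2}B(\sigma;\zeta)\Lambda(\sigma)^{1/2}$ with $\|B(\sigma;\zeta)\|_{\mathscr{L}(L^2_\diamond)}\le\|\zeta/\sigma\|_{L^\infty(\Omega)}$; in particular $\|B({\rm e}^\kappa;{\rm e}^\kappa\eta)\|_{\mathscr{L}(L^2_\diamond)}\le\|\eta\|_{L^\infty(\Omega)}$, the conductivity weight from the chain rule cancelling exactly. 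Inserting this and commuting the half-powers into the resolvents turns the displayed representation into $DL_\tau(\kappa;\eta)=\int_0^\infty P_s\,B({\rm e}^\kappa;{\rm e}^\kappa\eta)\,P_s\,{\rm d}s$ with $P_s:=\Lambda({\rm e}^\kappa)^{1/2}(\Lambda({\rm e}^\kappa)+\tau+s)^{-1}\in\mathscr{L}_{\rm sa}(L^2_\diamond)$. Second --- and this is where the cancellation resides --- $\int_0^\infty P_s^2\,{\rm d}s=\Lambda({\rm e}^\kappa)(\Lambda({\rm e}^\kappa)+\tau)^{-1}$ has norm at most one, whence testing against $f,g\in L^2_\diamond$ and applying Cauchy--Schwarz in the variable $s$ gives $\|DL_\tau(\kappa;\eta)\|_{\mathscr{L}(L^2_\diamond)}\le\|B({\rm e}^\kappa;{\rm e}^\kappa\eta)\|_{\mathscr{L}(L^2_\diamond)}\le\|\eta\|_{L^\infty(\Omega)}$, uniformly in $\tau>0$ and in $\kappa$. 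The identical computation at $\tau=0$ (where $\int_0^\infty P_s^2\,{\rm d}s=\mathrm{Id}$) produces a candidate limiting derivative $DL_0(\kappa;\,\cdot\,)\in\mathscr{L}(L^\infty(\Omega),\mathscr{L}(L^2_\diamond))$, and the same Cauchy--Schwarz mechanism shows $\|\int_0^\infty P_s^{(1)}Z(s)P_s^{(2)}\,{\rm d}s\|_{\mathscr{L}(L^2_\diamond)}\lesssim\sup_{s\ge0}\|Z(s)\|_{\mathscr{L}(L^2_\diamond)}$ for any measurable uniformly bounded family $\{Z(s)\}$ and outer factors $P_s^{(i)}$ of the form $\Lambda_a^{1/2}(\Lambda_b+\tau+s)^{-1}$ with $\Lambda_a=\Lambda({\rm e}^{\kappa_a})$, $\Lambda_b=\Lambda({\rm e}^{\kappa_b})$, $\|\kappa_a\|_{L^\infty(\Omega)},\|\kappa_b\|_{L^\infty(\Omega)}\le R$ (using the mutual comparability of the $\Lambda_a$'s recorded below); this mixed version is what feeds the continuity estimate.

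For \eqref{eq:maincont}, put $\sigma_i={\rm e}^{\kappa_i}$ and $\Lambda_i=\Lambda(\sigma_i)$, $i=1,2$, with $\|\kappa_i\|_{L^\infty(\Omega)}\le R$. I would use: (a) Alessandrini's identity, giving $|\langle g,(\Lambda_2-\Lambda_1)f\rangle|\le\|\sigma_2-\sigma_1\|_{L^\infty(\Omega)}\|\nabla u_f^{(2)}\|_{L^2(\Omega)}\|\nabla u_g^{(1)}\|_{L^2(\Omega)}$ with $u_f^{(2)},u_g^{(1)}$ the solutions of \eqref{eq:eitcont} for $(\sigma_2,f)$ and $(\sigma_1,g)$, so that \eqref{eq:Ubnd} yields $\|\Lambda_2-\Lambda_1\|_{\mathscr{L}(H^{-1/2}_\diamond,H^{1/2}_\diamond)}\le C_R\|\sigma_2-\sigma_1\|_{L^\infty(\Omega)}$; and \eqref{eq:FrDB}--\eqref{eq:FrDB2} together with the mean value theorem, giving $\|D\Lambda(\sigma_2;\sigma_2\eta)-D\Lambda(\sigma_1;\sigma_1\eta)\|_{\mathscr{L}(H^{-1/2}_\diamond,H^{1/2}_\diamond)}\le C_R\|\sigma_2-\sigma_1\|_{L^\infty(\Omega)}\|\eta\|_{L^\infty(\Omega)}$; (b) the fact that any $T\in\mathscr{L}(H^{-1/2}_\diamond,H^{1/2}_\diamond)$ factors as $T=\Lambda_a^{1/2}S\Lambda_a^{1/2}$ with $\|S\|_{\mathscr{L}(L^2_\diamond)}\le C_R\|T\|_{\mathscr{L}(H^{-1/2}_\diamond,H^{1/2}_\diamond)}$ for any admissible $\sigma_a$, since $\Lambda_a^{1/2}$ is an isomorphism $H^{-1/2}_\diamond\to L^2_\diamond$ and $L^2_\diamond\to H^{1/2}_\diamond$ --- an instance of the equivalent-norm results of the final appendix; and (c) the variational characterization $\langle f,\Lambda(\sigma)f\rangle=\max_{v\in H^1(\Omega)}(2\,\mathrm{Re}\,\langle f,v|_{\partial\Omega}\rangle-\int_\Omega\sigma|\nabla v|^2\,{\rm d}x)$, which shows $\Lambda(\sigma)$ is monotone decreasing in $\sigma$ and hence makes the quadratic forms $\langle\,\cdot\,,\Lambda_i\,\cdot\,\rangle$ comparable with constants depending only on $R$; consequently all ``sandwiched resolvents'' $\Lambda_a^{1/2}(\Lambda_b+\tau+s)^{-1}\Lambda_c^{1/2}$ are bounded by $C_R$ uniformly in $s\ge0$ and $\tau\ge0$, and the mixed estimate of the previous paragraph applies. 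Expanding $DL_\tau(\kappa_2;\eta)-DL_\tau(\kappa_1;\eta)$ by means of the representation of the first paragraph, the second resolvent identity $(\Lambda_2+\tau+s)^{-1}-(\Lambda_1+\tau+s)^{-1}=-(\Lambda_1+\tau+s)^{-1}(\Lambda_2-\Lambda_1)(\Lambda_2+\tau+s)^{-1}$, and the factorizations in (a)--(b), I would represent the difference as a finite sum of mixed integrals $\int_0^\infty\Lambda_a^{1/2}(\Lambda_b+\tau+s)^{-1}\,Z(s)\,(\Lambda_c+\tau+s)^{-1}\Lambda_d^{1/2}\,{\rm d}s$ in which, in each term, $Z(s)$ is the product of one ``small'' operator ($\Lambda_2-\Lambda_1$ or $D\Lambda(\sigma_2;\sigma_2\eta)-D\Lambda(\sigma_1;\sigma_1\eta)$ stripped of its half-powers), of some $B(\sigma_l;\sigma_l\eta)$, and of sandwiched resolvents, so that $\sup_{s\ge0}\|Z(s)\|_{\mathscr{L}(L^2_\diamond)}\le C_R\|\sigma_2-\sigma_1\|_{L^\infty(\Omega)}\|\eta\|_{L^\infty(\Omega)}$. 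Bounding each term by the mixed estimate and summing delivers, uniformly in $\tau\ge0$, $\|DL_\tau(\kappa_2;\,\cdot\,)-DL_\tau(\kappa_1;\,\cdot\,)\|_{\mathscr{L}(L^\infty(\Omega),\mathscr{L}(L^2_\diamond))}\le C(\Omega,R)\|\sigma_2-\sigma_1\|_{L^\infty(\Omega)}=C(\Omega,R)\|{\rm e}^{\kappa_2}-{\rm e}^{\kappa_1}\|_{L^\infty(\Omega)}$; arranging the difference so that the structure of the second paragraph applies term by term is the technically most delicate part.

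It remains to let $\tau\downarrow0$. A change of variables in the representation of the first paragraph gives $DL_\tau(\kappa;\eta)-DL_0(\kappa;\eta)=-\int_0^\tau\Lambda({\rm e}^\kappa)^{1/2}(\Lambda({\rm e}^\kappa)+s)^{-1}B({\rm e}^\kappa;{\rm e}^\kappa\eta)(\Lambda({\rm e}^\kappa)+s)^{-1}\Lambda({\rm e}^\kappa)^{1/2}\,{\rm d}s$, and estimating this in $\mathscr{L}(H^\epsilon_\diamond,H^{-\epsilon}_\diamond)$ through the equivalent norms --- so that a factor $\Lambda({\rm e}^\kappa)^{1/2+\epsilon'}(\Lambda({\rm e}^\kappa)+s)^{-1}$ with $\epsilon'=\min\{\epsilon,\tfrac12\}$, of norm $\lesssim_\epsilon s^{\epsilon'-1/2}$, appears at each end --- yields $\|DL_\tau(\kappa;\,\cdot\,)-DL_0(\kappa;\,\cdot\,)\|_{\mathscr{L}(L^\infty(\Omega),\mathscr{L}(H^\epsilon_\diamond,H^{-\epsilon}_\diamond))}\le C_{\epsilon,R}\,\tau^{2\epsilon'}\to0$ as $\tau\downarrow0$, uniformly for $\kappa$ in the $R$-ball; likewise $L_\tau(\kappa)\to L(\kappa)$ in $\mathscr{L}(H^\epsilon_\diamond,H^{-\epsilon}_\diamond)$, uniformly on $R$-balls --- the quantitative counterpart of \cite[Corollary~1]{Hyvonen18}. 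Hence $L$ is the locally uniform limit of the $C^1$ maps $L_\tau$ with locally uniformly convergent derivatives, so $L$ is Fr\'echet differentiable with $DL(\kappa;\,\cdot\,)=DL_0(\kappa;\,\cdot\,)$; and since the uniform $\mathscr{L}(L^2_\diamond)$-bounds of the two previous paragraphs persist after testing against the dense subspace $H^\epsilon_\diamond(\partial\Omega)\subset L^2_\diamond(\partial\Omega)$ and passing to the limit $\tau\downarrow0$, one obtains $DL(\kappa;\,\cdot\,)\in\mathscr{L}(L^\infty(\Omega),\mathscr{L}(L^2_\diamond))$ with $\|DL(\kappa;\,\cdot\,)\|\le1$ and the estimate \eqref{eq:maincont}, the latter also giving the claimed continuity of $\kappa\mapsto DL(\kappa;\,\cdot\,)$.
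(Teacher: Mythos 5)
Your proposal is correct and rests on the same architecture as the paper's proof: shift the spectrum by $\tau>0$, represent the derivative as $\int_0^\infty \Lambda_{\tau+s}^{-1}(\sigma)\,D\Lambda(\sigma;\eta)\,\Lambda_{\tau+s}^{-1}(\sigma)\,{\rm d}s$ as in \eqref{eq:the_derivative}, exploit the cancellation $\int_0^\infty \lambda_k(\lambda_k+\tau+s)^{-2}{\rm d}s\le 1$ to get $\tau$-uniform $\mathscr{L}(L^2_\diamond(\partial\Omega))$-bounds, and remove $\tau$ with rates $\tau^{c\epsilon}$ in $\mathscr{L}(H^\epsilon_\diamond(\partial\Omega),H^{-\epsilon}_\diamond(\partial\Omega))$. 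It differs in three technical respects worth recording. First, where the paper proves \eqref{eq:Sbnd} by pairing against $\Lambda_{\tau+s}^{-1}f$ and invoking the $\sigma$-dependent equivalent norms of Appendix~\ref{app:norms}, you factor $D\Lambda(\sigma;\sigma\eta)=\Lambda(\sigma)^{1/2}B\,\Lambda(\sigma)^{1/2}$ with $\|B\|_{\mathscr{L}(L^2_\diamond(\partial\Omega))}\le\|\eta\|_{L^\infty(\Omega)}$ via the energy identity; this is the same cancellation written multiplicatively, and it even yields the sharp constant $\|DL(\kappa;\,\cdot\,)\|\le 1$ in place of the paper's $C(\Omega,\varsigma_-,\varsigma_+)$. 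Second, the paper deduces differentiability at $\tau=0$ from Taylor's theorem together with the $\tau$-uniform second-derivative bound of Proposition~\ref{prop:logtau_2diff} and the optimized choice $\tau=\|\eta\|_{L^\infty(\Omega)}^{1/\epsilon}$, whereas you invoke the classical theorem on uniform limits of $C^1$ maps with uniformly convergent derivatives; this dispenses with $D^2F_\tau$ for the differentiability step (though the paper still needs it, or your finite-difference substitute, for the Lipschitz estimate). Third, for \eqref{eq:maincont} the paper applies the mean-value theorem to $D^2F_\tau$ from \eqref{eq:second_derivative}, while you expand the difference of the two integral representations by the second resolvent identity and the Lipschitz continuity of $\sigma\mapsto\Lambda(\sigma)$ and $\sigma\mapsto D\Lambda(\sigma;\sigma\eta)$; the resulting mixed integrals are precisely the finite-difference analogues of the three terms in \eqref{eq:second_derivative}, and your bookkeeping (sandwiched resolvents controlled by monotonicity and the L\"owner--Heinz comparison underlying Theorem~\ref{thm:norm_equi}, then Cauchy--Schwarz in $s$) checks out, though as you note it must be verified term by term. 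I found no gaps; the trade-off is that your route gives slightly sharper constants and avoids the second derivative where possible, at the cost of a more delicate combinatorial decomposition in the continuity estimate.
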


Although $L(\kappa) \notin \mathscr{L}(L^2_\diamond(\partial \Omega))$ for all $\kappa \in L^\infty(\Omega)$, it follows straightforwardly from Theorem~\ref{thm:main} that this regularity issue disappears for a {\em relative} completely logarithmic forward map. 

\begin{corollary}
	\label{cor:main}
	For any $\kappa, \kappa_0 \in L^\infty(\Omega)$, the operator $L(\kappa) - L(\kappa_0)$ continuously extends to a self-adjoint operator in $\mathscr{L}(L^2_\diamond(\partial\Omega))$. In particular, for a fixed $\kappa_0\in L^\infty(\Omega)$, 
	\begin{equation*}
	\kappa\mapsto L(\kappa) - L(\kappa_0)
	\end{equation*}
	is continuously Fr\'echet differentiable as a map $L^\infty(\Omega)\to \mathscr{L}(L^2_\diamond(\partial \Omega))$, with the derivative $DL(\kappa,\, \cdot \,)$. Moreover, for any  $\kappa_1,\kappa_2\in L^\infty(\Omega)$ lying inside the origin-centered ball of radius $R>0$ in the topology of $L^\infty(\Omega)$, it holds
	\begin{equation} \label{eq:Llocaluniformcont}
	\| L(\kappa_2) - L(\kappa_1) \|_{\mathscr{L}(L^2_\diamond(\partial\Omega))} \leq C \| {\rm e}^{\kappa_2} - {\rm e}^{\kappa_1} \|_{L^\infty(\Omega)},
	\end{equation}
	where $C>0$ only depends on $\Omega$ and $R$.
\end{corollary}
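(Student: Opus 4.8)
The plan is to derive Corollary~\ref{cor:main} directly from Theorem~\ref{thm:main} by an application of the fundamental theorem of calculus in the Banach space setting, exploiting that the Fr\'echet derivative $DL(\kappa;\,\cdot\,)$ is, by the theorem, a bounded operator on $L^2_\diamond(\partial\Omega)$ depending continuously on $\kappa$. First I would fix $\kappa,\kappa_0\in L^\infty(\Omega)$, consider the affine path $t\mapsto \kappa_t := \kappa_0 + t(\kappa-\kappa_0)$ for $t\in[0,1]$, and write formally
\begin{equation*}
L(\kappa) - L(\kappa_0) = \int_0^1 DL(\kappa_t;\kappa-\kappa_0)\,{\rm d}t.
\end{equation*}
A priori the left-hand side is an identity in $\mathscr{L}(H^\epsilon_\diamond(\partial\Omega),H^{-\epsilon}_\diamond(\partial\Omega))$, where $L$ is genuinely differentiable by Theorem~\ref{thm:main}; but the integrand on the right takes values in $\mathscr{L}(L^2_\diamond(\partial\Omega))$ and, since $t\mapsto \kappa_t$ is continuous into $L^\infty(\Omega)$ and $\kappa\mapsto DL(\kappa;\,\cdot\,)$ is continuous into $\mathscr{L}(L^\infty(\Omega),\mathscr{L}(L^2_\diamond(\partial\Omega)))$ by the same theorem, the map $t\mapsto DL(\kappa_t;\kappa-\kappa_0)$ is continuous $[0,1]\to\mathscr{L}(L^2_\diamond(\partial\Omega))$ and hence the Bochner integral exists in that Banach space. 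Because the embedding $\mathscr{L}(L^2_\diamond(\partial\Omega))\hookrightarrow\mathscr{L}(H^\epsilon_\diamond(\partial\Omega),H^{-\epsilon}_\diamond(\partial\Omega))$ is continuous and linear, it commutes with the integral, so the $\mathscr{L}(L^2_\diamond)$-valued integral is a preimage of $L(\kappa)-L(\kappa_0)$ under this embedding; by injectivity of the embedding this identifies $L(\kappa)-L(\kappa_0)$ with an element of $\mathscr{L}(L^2_\diamond(\partial\Omega))$, which is the claimed continuous extension. Self-adjointness on $L^2_\diamond(\partial\Omega)$ then follows since each $DL(\kappa_t;\kappa-\kappa_0)$ is self-adjoint there (symmetry of the derivative, carried over from the symmetry of $D\Lambda$) and self-adjointness is preserved under Bochner integration and norm limits.

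Next I would establish the asserted Fr\'echet differentiability of $\kappa\mapsto L(\kappa)-L(\kappa_0)$ as a map $L^\infty(\Omega)\to\mathscr{L}(L^2_\diamond(\partial\Omega))$ with derivative $DL(\kappa;\,\cdot\,)$. The candidate derivative is already known to lie in $\mathscr{L}(L^\infty(\Omega),\mathscr{L}(L^2_\diamond(\partial\Omega)))$ by Theorem~\ref{thm:main}, so it remains to verify the little-$o$ remainder estimate in the $\mathscr{L}(L^2_\diamond(\partial\Omega))$ norm. For this I would again use the integral representation on the segment from $\kappa$ to $\kappa+\eta$, writing the remainder as
\begin{equation*}
\big(L(\kappa+\eta) - L(\kappa)\big) - DL(\kappa;\eta) = \int_0^1 \big( DL(\kappa + s\eta;\eta) - DL(\kappa;\eta) \big)\,{\rm d}s,
\end{equation*}
and bound its $\mathscr{L}(L^2_\diamond)$-norm by $\sup_{s\in[0,1]}\|DL(\kappa+s\eta;\,\cdot\,) - DL(\kappa;\,\cdot\,)\|_{\mathscr{L}(L^\infty(\Omega),\mathscr{L}(L^2_\diamond))}\,\|\eta\|_{L^\infty(\Omega)}$, which is $o(\|\eta\|_{L^\infty(\Omega)})$ by the continuity statement of Theorem~\ref{thm:main}; the same continuity statement gives that the derivative itself is continuous in $\kappa$. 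Finally, for the quantitative local-Lipschitz bound~\eqref{eq:Llocaluniformcont}, I would apply the integral representation once more to $\kappa_1,\kappa_2$ in the radius-$R$ ball, note that the whole segment between them stays in that ball, bound
\begin{equation*}
\| L(\kappa_2) - L(\kappa_1) \|_{\mathscr{L}(L^2_\diamond(\partial\Omega))} \le \sup_{t\in[0,1]} \| DL(\kappa_t;\,\cdot\,) \|_{\mathscr{L}(L^\infty(\Omega),\mathscr{L}(L^2_\diamond))}\,\|\kappa_2 - \kappa_1\|_{L^\infty(\Omega)},
\end{equation*}
observe that $\sup_{\|\kappa\|_\infty\le R}\|DL(\kappa;\,\cdot\,)\|$ is finite (e.g.~by~\eqref{eq:maincont} together with $DL(0;\,\cdot\,)$ being a fixed bounded operator), and then convert $\|\kappa_2-\kappa_1\|_{L^\infty(\Omega)}$ into $\|{\rm e}^{\kappa_2}-{\rm e}^{\kappa_1}\|_{L^\infty(\Omega)}$ using that $x\mapsto{\rm e}^x$ has derivative bounded below by ${\rm e}^{-R}$ on $[-R,R]$, so that $\|\kappa_2-\kappa_1\|_{L^\infty(\Omega)}\le {\rm e}^{R}\|{\rm e}^{\kappa_2}-{\rm e}^{\kappa_1}\|_{L^\infty(\Omega)}$, absorbing all constants into a $C=C(\Omega,R)$.

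The only genuinely delicate point is the bookkeeping that reconciles the two different operator topologies: $L$ is differentiable as a map into $\mathscr{L}(H^\epsilon_\diamond(\partial\Omega),H^{-\epsilon}_\diamond(\partial\Omega))$, yet the derivative and all the integrands live in the strictly smaller space $\mathscr{L}(L^2_\diamond(\partial\Omega))$, so one must be careful to interpret the integral $\int_0^1 DL(\kappa_t;\kappa-\kappa_0)\,{\rm d}t$ as a Bochner integral in $\mathscr{L}(L^2_\diamond(\partial\Omega))$ (where continuity of the integrand, hence integrability, follows from Theorem~\ref{thm:main}) and then invoke the continuity and injectivity of the canonical embedding into $\mathscr{L}(H^\epsilon_\diamond,H^{-\epsilon}_\diamond)$ to transport the fundamental-theorem-of-calculus identity — valid in the larger space — back down to an identity in the smaller one. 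Everything else is a routine application of the mean-value inequality for Fr\'echet-differentiable maps on convex sets, the preservation of self-adjointness under integration and limits, and the elementary Lipschitz estimate for the exponential on a bounded interval.
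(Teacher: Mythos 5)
Your proposal is correct and follows essentially the same route as the paper: the fundamental theorem of calculus for Bochner integrals along a segment, with the integrand landing in $\mathscr{L}(L^2_\diamond(\partial\Omega))$ by the boundedness and continuity of the derivative, and the identity transported between the two operator topologies via the continuous injective embedding. The only cosmetic difference is that you parametrize the segment in $\kappa$-space and hence need the elementary bound $\|\kappa_2-\kappa_1\|_{L^\infty(\Omega)}\le {\rm e}^{R}\|{\rm e}^{\kappa_2}-{\rm e}^{\kappa_1}\|_{L^\infty(\Omega)}$ at the end, whereas the paper parametrizes in $\sigma$-space and obtains the exponential form of \eqref{eq:Llocaluniformcont} directly by substituting $\sigma_j={\rm e}^{\kappa_j}$.
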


The following example sheds light on Corollary~\ref{cor:main} in case of homogeneous conductivities. The reader is also encouraged to consult \cite[Example~3]{Hyvonen18}.

\begin{example}
  Consider two positive {\em constant} conductivities $\sigma, \sigma_0 \in \R_+$ in $\Omega$ and denote the corresponding log-conductivities by $\kappa, \kappa_0 \in \R$, respectively. If $\Lambda(\sigma)$ obeys the spectral decomposition \eqref{eq:spectral}, then via a simple scaling argument (see,~e.g.,~\cite[Example~1]{Hyvonen18}), 
$$
\Lambda(\sigma_0)f = \sum_{k=1}^\infty \frac{\sigma}{\sigma_0} \lambda_k \langle f, \phi_k\rangle \, \phi_k, \qquad f \in L^2_\diamond(\partial \Omega).
$$
Hence, for any $f \in H^{\epsilon}_{\diamond}(\partial \Omega)$ with $\epsilon > 0$,
$$
\big( L(\kappa) - L(\kappa_0) \big) f = \big(\log \! \Lambda({\rm e}^{\kappa}) - \log \! \Lambda({\rm e}^{\kappa_0}) \big) f = \sum_{k=1}^\infty (\kappa_0 - \kappa) \langle f, \phi_k\rangle \, \phi_k = (\kappa_0 - \kappa) f.
$$
In other words, $L(\kappa) - L(\kappa_0)$ extends to $\mathscr{L}(L^2_\diamond(\partial \Omega))$ as $(\kappa_0 - \kappa)I$, where $I$ is the identity map. 
\end{example}

We also obtain the following representation for $DL$, generalizing \cite[Theorem~1]{Hyvonen18} to infinite dimensions.
\begin{corollary} \label{cor:DLformula}
Let  $\kappa,\eta\in L^\infty(\Omega)$ be arbitrary and denote by $\{\lambda_k, \phi_k\}_{k\in\mathbb{N}}$ a normalized eigensystem of $\Lambda({\rm e}^\kappa)$. Then for any $f\in L^2_\diamond(\partial\Omega)$, 
	\begin{equation*} 
	DL(\kappa;\eta)f = \sum_{j=1}^\infty \sum_{k=1}^\infty c_{j,k} \, \langle f,\phi_k\rangle \big\langle D\Lambda({\rm e}^\kappa; \eta{\rm e}^\kappa)\phi_k,\phi_j\big\rangle \, \phi_j,
	\end{equation*}
	where 
	\begin{equation*}
	c_{j,k} := \begin{cases}
	\frac{\log(\lambda_j)-\log(\lambda_k)}{\lambda_j-\lambda_k},  & \quad \lambda_j\neq \lambda_k, \\[1mm]
	\frac{1}{\lambda_j}, & \quad \lambda_j=\lambda_k.
	\end{cases}
	\end{equation*}	
\end{corollary}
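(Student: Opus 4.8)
The plan is to derive the spectral formula by combining the already-established chain rule $DL(\kappa;\eta) = D(\log\!\Lambda)(\mathrm{e}^\kappa;\eta\mathrm{e}^\kappa)$ with a Daleckii--Krein type formula for the Fr\'echet derivative of the operator logarithm, expressed in the eigenbasis of $\Lambda(\mathrm{e}^\kappa)$. First I would recall from Theorem~\ref{thm:main} (and the construction in Section~\ref{sec:convergence}) that $DL(\kappa;\eta)$ is a bounded operator on $L^2_\diamond(\partial\Omega)$, so it suffices to compute its matrix entries $\langle DL(\kappa;\eta)\phi_k,\phi_j\rangle$ with respect to the orthonormal basis $\{\phi_k\}$ of eigenfunctions of $\Lambda(\mathrm{e}^\kappa)$; once these are shown to equal $c_{j,k}\langle D\Lambda(\mathrm{e}^\kappa;\eta\mathrm{e}^\kappa)\phi_k,\phi_j\rangle$, the stated series representation follows by expanding $f = \sum_k \langle f,\phi_k\rangle\phi_k$ and using boundedness to justify interchanging $DL$ with the (convergent) sum.

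The key step is to establish the scalar identity at the level of matrix entries. Here I would approximate $\log$ on the relevant spectral interval by nice functions and pass to the limit. Concretely, fix $\tau>0$ as in Section~\ref{sec:shifted} so that the spectrum of $\Lambda(\mathrm{e}^\kappa)+\tau I$ lies in a compact interval bounded away from $0$; on such an interval $\log$ is a uniform limit of polynomials (or one can use the Cauchy integral representation $\log(z) = \frac{1}{2\pi i}\oint \log(\zeta)(\zeta - z)^{-1}\,d\zeta$). For a polynomial $p$, the derivative of $\sigma\mapsto p(\Lambda(\mathrm{e}^\sigma)+\tau I)$ is computed by the product rule, and a direct computation in the eigenbasis shows that the $(j,k)$ entry of $Dp(\Lambda;\eta)$ equals the divided difference $\frac{p(\lambda_j)-p(\lambda_k)}{\lambda_j-\lambda_k}$ (resp.\ $p'(\lambda_j)$ when $\lambda_j = \lambda_k$) times $\langle D\Lambda\, \phi_k,\phi_j\rangle$ — this is the elementary finite-dimensional version of the Daleckii--Krein formula, valid because $D\Lambda(\mathrm{e}^\kappa;\eta\mathrm{e}^\kappa)$ is a fixed bounded operator. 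Taking $p \to \log$ uniformly on the spectral interval, the divided differences converge pointwise to $c_{j,k}$ and, with the uniform operator bounds from Section~\ref{sec:shifted} allowing one to pass the limit inside, one obtains the claimed entries for the $\tau$-shifted map; finally letting $\tau \downarrow 0$ in the controlled manner of Section~\ref{sec:convergence} (noting $c_{j,k}$ depends continuously on the shifted eigenvalues $\lambda_j+\tau,\lambda_k+\tau$ and the entries $\langle D\Lambda\,\phi_k,\phi_j\rangle$ are $\tau$-independent) recovers the stated formula.

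The main obstacle will be making the double-limit argument rigorous: one must control the convergence $p\to\log$ and $\tau\downarrow 0$ \emph{jointly} while the eigenvalues $\lambda_k \to 0$, so the divided-difference coefficients $c_{j,k}$ blow up like $\log(\lambda_j)/\lambda_j$ as $\min(j,k)\to\infty$. The saving grace — and the reason the final series still converges in $\mathscr{L}(L^2_\diamond(\partial\Omega))$ — is exactly the regularity of $DL$ already proven in Theorem~\ref{thm:main}: the growth of $c_{j,k}$ is compensated by the decay of $\langle D\Lambda(\mathrm{e}^\kappa;\eta\mathrm{e}^\kappa)\phi_k,\phi_j\rangle$, which reflects the smoothing of $D\Lambda \colon H^{-1/2}_\diamond(\partial\Omega)\to H^{1/2}_\diamond(\partial\Omega)$. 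So rather than proving convergence of the series from scratch, I would lean on Theorem~\ref{thm:main} for the convergence and use the approximation argument purely to \emph{identify} the limit's matrix entries, testing the (already bounded) operator $DL(\kappa;\eta)$ against the basis pairs $\phi_j,\phi_k$ and passing limits entry-by-entry, where only scalar convergences are needed. A minor additional point is checking that $c_{j,k}$ is well defined and finite whenever $\lambda_j = \lambda_k$ (giving $1/\lambda_j$, the value of $(\log)'$ there), which is immediate, and that the double sum is genuinely independent of the ordering and choice of eigenbasis on degenerate eigenspaces — this follows since $DL(\kappa;\eta)$ itself is basis-independent and the coefficient $c_{j,k}$ only depends on the eigenvalues.
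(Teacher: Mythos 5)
Your proposal is correct in its overall skeleton --- chain rule to reduce $DL(\kappa;\eta)$ to $D\!\log\!\Lambda({\rm e}^\kappa;\eta{\rm e}^\kappa)$, then identification of the matrix entries of an operator already known to be bounded on $L^2_\diamond(\partial\Omega)$ --- but the way you compute the entries differs genuinely from the paper. The paper never approximates $\log$ by polynomials: it has already established (Lemma~\ref{lemma:help_diff} and Proposition~\ref{prop:logtau_diff}) the Bochner-integral representation $DF_\tau(\sigma;\eta)=\int_0^\infty \Lambda_{\tau+s}^{-1}(\sigma)\,D\Lambda(\sigma;\eta)\,\Lambda_{\tau+s}^{-1}(\sigma)\,{\rm d}s$, valid and bounded even at $\tau=0$, so the entry $\langle DF_\tau(\sigma;\eta)\phi_k,\phi_j\rangle$ reduces in two lines to $\langle D\Lambda(\sigma;\eta)\phi_k,\phi_j\rangle\int_0^\infty\frac{{\rm d}s}{(\lambda_k+\tau+s)(\lambda_j+\tau+s)}$, and the scalar integral evaluates in closed form to the divided difference $c_{j,k,\tau}$; setting $\tau=0$ directly gives the corollary with no limiting procedure at all. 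Your Daleckii--Krein route via polynomial (or Cauchy-integral) approximation reaches the same divided differences but at the cost of two extra limits: $p\to\log$ (which for the degenerate case $\lambda_j=\lambda_k$ needs convergence of $p'$ as well, so uniform convergence of $p$ alone is not quite enough) and $\tau\downarrow 0$ (justified entry-by-entry via Proposition~\ref{prop:tau_to-zero}, since $\phi_j,\phi_k\in H^{1/2}_\diamond(\partial\Omega)\subset H^{\epsilon}_\diamond(\partial\Omega)$). Note also that the ``main obstacle'' you flag --- the joint blow-up of $c_{j,k}$ as $\min(j,k)\to\infty$ --- does not actually arise in your own strategy, because the identification is done for each fixed pair $(j,k)$ separately and the convergence of the full double series is inherited from the boundedness of $DL(\kappa;\eta)$, exactly as in the paper. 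What your approach buys is independence from the specific integral formula for $D\!\log$ (it would work for any operator function with a Daleckii--Krein calculus); what the paper's approach buys is brevity and the avoidance of any approximation argument, since the $\tau=0$ integral representation is already in hand.
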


The rest of this paper aims at proving the above results and providing further insight on their natural generalizations;
the final proofs of Theorem~\ref{thm:main}, Corollary \ref{cor:main}, and Corollary \ref{cor:DLformula} are presented at the end of Section~\ref{sec:convergence}.
The remaining work is divided into two parts: In the following section, we translate the spectrum of $\Lambda(\sigma)$ by $\tau > 0$ to the right of the origin and prove the Fr\'echet differentiability of the resulting logarithmic isomorphism $\log\!\Lambda_\tau(\sigma): L^2_\diamond(\partial \Omega) \to L^2_\diamond(\partial \Omega)$ as well as some other useful properties. Section~\ref{sec:convergence} is then devoted to checking that everything stays intact when $\tau$ tends to zero. In addition, auxiliary results on equivalent norms for $H_\diamond^r(\partial \Omega)$, $r \in [-\tfrac{1}{2},\tfrac{1}{2}]$, defined using the $\sigma$-dependent singular system of $\Lambda(\sigma)$ are presented in Appendix~\ref{app:norms}.

\begin{remark}
	As the completely logarithmic forward map $L$ defined by \eqref{eq:log_for} is the one that was found to be closer to linear than, say, the standard forward map $\sigma \mapsto \Lambda(\sigma)$ in the numerical studies of \cite{Hyvonen18}, our main results presented above are formulated for $L$. However, when proving Theorem~\ref{thm:main}, it is more natural to first consider
	\begin{equation}
	\label{eq:log_for2}
	\log\!\Lambda: \sigma \mapsto \log\!\Lambda(\sigma)
	\end{equation}
	that maps $L^\infty_+(\Omega)$ to $\mathscr{L}(H^\epsilon_\diamond(\partial \Omega), H^{-\epsilon}_\diamond(\partial \Omega))$ and subsequently resort to the chain rule for Banach spaces.
\end{remark}

\section{Shifted Neumann-to-Dirichlet map $\Lambda_\tau$ and its logarithm}
\label{sec:shifted}
Adding a positive multiple $\tau > 0$ of identity to the ND map allows to define a shifted logarithmic ND map that is bounded on all of $L^2_\diamond(\partial\Omega)$ and can be differentiated with respect to the conductivity by means of standard functional calculus. This section provides uniform norm bounds with respect to $\tau$ for the first and second order derivatives of this shifted logarithmic boundary map in $\mathscr{L}(L^\infty(\Omega),\mathscr{L}(L^2_\diamond(\partial\Omega)))$ and $\mathscr{L}(L^\infty(\Omega)^2,\mathscr{L}(L^2_\diamond(\partial\Omega)))$, respectively. Such bounds guarantee the existence of well defined limit operators for these derivatives as $\tau\to 0^+$. In particular, the limit of the first derivative provides a natural candidate for the searched for $D\!\log\!\Lambda(\sigma; \, \cdot \,)$ in $\mathscr{L}(L^\infty(\Omega),\mathscr{L}(L^2_\diamond(\partial\Omega)))$.

\subsection{Definition and basic properties}
We define the shifted Neumann-to-Dirichlet map, or a Neumann-to-Robin map, as
\begin{equation}
\label{eq:shift_Lambda}
\Lambda_\tau(\sigma):= \Lambda(\sigma) + \tau I:  L^2_\diamond(\partial \Omega) \to L^2_\diamond(\partial \Omega),
\end{equation}
where $I : L^2_\diamond(\partial \Omega) \to L^2_\diamond(\partial \Omega)$ is the identity map.
Obviously,
\begin{equation*}
\Lambda_\tau(\sigma)f =   U + \tau f = \sum_{k=1}^\infty (\lambda_k + \tau) \langle f, \phi_k \rangle \, \phi_k,
\end{equation*}
where the mean-free Dirichlet boundary value $U \in H^{1/2}_\diamond(\partial \Omega)$ is defined by \eqref{eq:Udef} and the singular system $\{ \lambda_k, \phi_k\}_{k=1}^\infty \subset \R_+ \times H^{1/2}_\diamond(\partial \Omega)$ is as in \eqref{eq:spectral}. In particular, the spectrum of $\Lambda_\tau(\sigma)$ is contained in the interval $[\tau,\tau+\|\Lambda(\sigma)\|_{\mathscr{L}(L^2_\diamond(\partial\Omega))}]$. The following remark summarizes some obvious properties of $\Lambda_\tau(\sigma)$.

\begin{remark}
	\label{remark:N-to-R}
	For any $\sigma \in L^\infty_+(\Omega)$ and $\tau >0$, the shifted ND map $\Lambda_\tau(\sigma): L^2_\diamond(\partial \Omega) \to L^2_\diamond(\partial \Omega)$ is a self-adjoint isomorphism.
	The spectrum of $\Lambda_\tau(\sigma)$ consists solely of its strictly positive eigenvalues $\lambda_{k,\tau} := \lambda_k + \tau$, $k \in \N$, and their accumulation point $\tau$. Moreover, $\Lambda_\tau(\sigma)$ extends to an isomorphism from $H^{-1/2}_\diamond(\partial \Omega)$ to itself (denoted by the same symbol).
\end{remark}

Observe that the shifted forward map induced by \eqref{eq:shift_Lambda}, i.e., 
\begin{equation*}
L^\infty_+(\Omega) \ni \sigma \mapsto \Lambda_\tau(\sigma) \in \mathscr{L}(H^{-1/2}_\diamond(\partial \Omega)),
\end{equation*}
obviously has the same Fr\'echet derivative as the original unshifted version \eqref{eq:lambda}, since the perturbation $\tau I$ is independent of $\sigma$. In particular, the derivative $D\Lambda_\tau(\sigma; \eta) = D\Lambda(\sigma; \eta)$ belongs to $\mathscr{L}(H^{-1/2}_\diamond(\partial \Omega), H^{1/2}_\diamond(\partial \Omega))$  and is thus more smoothening than $\Lambda_\tau(\sigma)$ itself.

\subsection{Differentiability of \texorpdfstring{\boldmath$\log\!\Lambda_\tau$}{shifted log ND}}
We define the logarithm of $\Lambda_\tau(\sigma)$ in the natural manner, i.e.,
\begin{equation}
\label{eq:logspectral_tau}
\log\!\Lambda_\tau(\sigma) \colon f \mapsto \sum_{k=1}^\infty \log(\lambda_k + \tau) \langle f,\phi_k \rangle \, \phi_k, \qquad \tau > 0.
\end{equation}
It is obvious that $\log\!\Lambda_\tau(\sigma)$ is a bounded self-adjoint operator on $L^2_\diamond(\partial\Omega)$ for any $\tau > 0$
as its spectrum lies on the bounded interval $[\log(\tau), \log(\tau + \lambda_1)] \subset \R$ (but it is only self-adjoint for $\tau = 0$).
Moreover, $\log\!\Lambda_\tau(\sigma)-\log(\tau)I$ is compact since its eigenvalues $\log(\tfrac{\lambda_k}{\tau}+1)$ converge to zero as $k \to \infty$. This implies that $\log\!\Lambda_\tau(\sigma_2)-\log\!\Lambda_\tau(\sigma_1)$ is also compact for any $\sigma_1,\sigma_2\in L_+^\infty(\Omega)$ and $\tau>0$. 

As $\Lambda_\tau(\sigma)$ is itself an element of $\mathscr{L}(L^2_\diamond(\partial \Omega))$, the definition \eqref{eq:logspectral_tau} coincides with the more general Riesz--Dunford formula
\begin{equation}
\log(T) = \frac{1}{2\pi {\rm i}} \int_{\Gamma} \log(z) (z I - T)^{-1} {\rm d} z, \label{eq:riesz-dunford}
\end{equation}
that is valid for any $T \in \mathscr{L}(L^2_\diamond(\partial \Omega))$ for which there exists a positively oriented rectifiable Jordan curve $\Gamma \subset \C$ that encloses the spectrum of $T$ without intersecting the closed negative real axis. 

We also introduce the `shifted logarithmic' forward operator
\begin{equation}
\label{eq:log_for_tau}
\log\!\Lambda_\tau: \sigma \mapsto \log\!\Lambda_\tau(\sigma), \qquad L^\infty_+(\Omega) \to \mathscr{L}(L^2_\diamond(\partial \Omega))
\end{equation}
for any $\tau > 0$. Because the natural logarithm is analytic in a neighborhood of the spectrum of $\Lambda_\tau(\sigma)$, we have the following preliminary differentiability result. In what follows, we employ the shorthand notation $\Lambda^{-1}_{\tau+s}(\sigma) := (\Lambda_{\tau + s}(\sigma))^{-1}$; 
recall from Remark~\ref{remark:N-to-R} that $\Lambda^{-1}_{\tau+s}(\sigma) \in \mathscr{L}(L^2_\diamond(\partial \Omega))$ for $\tau + s>0$ because the spectrum of $\Lambda_{\tau + s}(\sigma)$ lies on the interval $[\tau + s, \lambda_1 + \tau + s]$ not containing the origin.

\begin{lemma}
	\label{lemma:help_diff}
	The operator logarithm
	is differentiable at $\Lambda_\tau(\sigma)$ in the sense that there exists $D \! \log(\Lambda_\tau(\sigma); \, \cdot \,) \in \mathscr{L}(\mathscr{L}_{\rm sa}(L^2_\diamond(\partial \Omega)))$ such that
	\begin{equation*}
	\frac{1}{\|S \|} \big \| \log (\Lambda_\tau(\sigma) + S) - \log\!\Lambda_\tau(\sigma)  - D \! \log( \Lambda_\tau(\sigma); S) \big \| \to 0 \qquad {\rm as} \ \ \|S\| \to 0
	\end{equation*}
	for a self-adjoint perturbation $S$ and with $\| \cdot \| = \| \cdot \|_{\mathscr{L}(L^2_\diamond(\partial \Omega))}$. 
The derivative allows the representation
	\begin{equation} \label{eq:operatorlog}
	D \!\log(\Lambda_\tau(\sigma); S) = \int_0^\infty \Lambda_{\tau+s}^{-1}(\sigma) \, S \, \Lambda_{\tau+s}^{-1}(\sigma) \, {\rm d} s
	\end{equation}
	understood in the sense of a Bochner integral. 
	
	Moreover, if $S$ can be extended to an operator in $\mathscr{L}(H_\diamond^{-1/2}(\partial\Omega),H_\diamond^{1/2}(\partial\Omega))$ and $0<\varsigma_-\leq\sigma\leq \varsigma_+<\infty$ almost everywhere in $\Omega$, then
	\begin{equation}
	\| D \!\log(\Lambda_\tau(\sigma); S)\|_{\mathscr{L}(L^2_\diamond(\partial\Omega))} \leq C\| S \|_{\mathscr{L}(H_\diamond^{-1/2}(\partial\Omega),H_\diamond^{1/2}(\partial\Omega))}, \label{eq:Sbnd}
	\end{equation}
	where $C = C(\Omega,\varsigma_-,\varsigma_+)$ is independent of $\sigma$ and $\tau > 0$.
\end{lemma}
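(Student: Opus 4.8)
The plan is to treat the three assertions in turn --- Fr\'echet differentiability, the Bochner integral formula \eqref{eq:operatorlog}, and the uniform bound \eqref{eq:Sbnd} --- and for the first two I would bypass the Riesz--Dunford representation \eqref{eq:riesz-dunford} in favour of the elementary identity
\begin{equation*}
\log T = \int_0^\infty \Big( \tfrac{1}{1+s}\,I - (T+sI)^{-1} \Big)\,{\rm d}s,
\end{equation*}
valid for every positive $T\in\mathscr{L}_{\rm sa}(L^2_\diamond(\partial\Omega))$ by the spectral theorem (the scalar identity $\int_0^\infty(\tfrac{1}{1+s}-\tfrac{1}{t+s})\,{\rm d}s=\log t$ holds for $t>0$, and the operator integrand decays like $s^{-2}$ in norm on the spectrum of $\Lambda_\tau(\sigma)$, so the Bochner integral converges). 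Applying this to $T=\Lambda_\tau(\sigma)$ and to $T=\Lambda_\tau(\sigma)+S$ for self-adjoint $S$ with $\|S\|_{\mathscr{L}(L^2_\diamond(\partial\Omega))}<\tau/2$ --- which keeps the spectrum of $\Lambda_\tau(\sigma)+S$ inside $[\tau/2,\infty)$ --- and using the resolvent identity once, the difference quotient becomes $\int_0^\infty\Lambda_{\tau+s}^{-1}(\sigma)\,S\,(\Lambda_{\tau+s}(\sigma)+S)^{-1}\,{\rm d}s$. Subtracting the candidate $D\!\log(\Lambda_\tau(\sigma);S)$ from \eqref{eq:operatorlog}, rewriting the integrand as $\Lambda_{\tau+s}^{-1}(\sigma)S\big[(\Lambda_{\tau+s}(\sigma)+S)^{-1}-\Lambda_{\tau+s}^{-1}(\sigma)\big]$ and applying the resolvent identity once more, the remainder is bounded by $\|S\|_{\mathscr{L}(L^2_\diamond(\partial\Omega))}^2\int_0^\infty(\tau+s)^{-2}(\tfrac\tau2+s)^{-1}\,{\rm d}s=O(\|S\|^2)$, which establishes \eqref{eq:operatorlog} together with Fr\'echet differentiability. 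That the integral in \eqref{eq:operatorlog} is Bochner in $\mathscr{L}(L^2_\diamond(\partial\Omega))$ follows from norm continuity of $s\mapsto\Lambda_{\tau+s}^{-1}(\sigma)$ and the bound $\|\Lambda_{\tau+s}^{-1}(\sigma)S\Lambda_{\tau+s}^{-1}(\sigma)\|\le\|S\|(\tau+s)^{-2}$; integrating the same bound in $s$ shows $D\!\log(\Lambda_\tau(\sigma);\,\cdot\,)\in\mathscr{L}(\mathscr{L}_{\rm sa}(L^2_\diamond(\partial\Omega)))$, linearity in $S$ is immediate, and self-adjointness of $D\!\log(\Lambda_\tau(\sigma);S)$ for self-adjoint $S$ follows from that of $\Lambda_{\tau+s}^{-1}(\sigma)$.

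For the bound \eqref{eq:Sbnd}, pulling norms directly into \eqref{eq:operatorlog} only gives $\int_0^\infty(\tau+s)^{-2}\,{\rm d}s\,\|S\|_{\mathscr{L}(L^2_\diamond(\partial\Omega))}=\tau^{-1}\|S\|_{\mathscr{L}(L^2_\diamond(\partial\Omega))}$, which blows up as $\tau\to0^+$ and wastes the smoothing of $S$; $\tau$-uniformity is recovered by keeping the two resolvent factors apart and pairing them through Cauchy--Schwarz in $s$. Concretely, for $f,g\in L^2_\diamond(\partial\Omega)$ I would use self-adjointness of $\Lambda_{\tau+s}^{-1}(\sigma)$ to write $\langle g,\Lambda_{\tau+s}^{-1}(\sigma)S\Lambda_{\tau+s}^{-1}(\sigma)f\rangle=\langle\Lambda_{\tau+s}^{-1}(\sigma)g,\,S\,\Lambda_{\tau+s}^{-1}(\sigma)f\rangle$, estimate this by the dual pairing as $\le\|S\|_{\mathscr{L}(H_\diamond^{-1/2}(\partial\Omega),H_\diamond^{1/2}(\partial\Omega))}\|\Lambda_{\tau+s}^{-1}(\sigma)g\|_{H^{-1/2}(\partial\Omega)}\|\Lambda_{\tau+s}^{-1}(\sigma)f\|_{H^{-1/2}(\partial\Omega)}$, and then invoke the $\sigma$-uniform equivalence of $\|\,\cdot\,\|_{H^{-1/2}(\partial\Omega)}$ with $\|\Lambda(\sigma)^{1/2}\,\cdot\,\|_{L^2(\partial\Omega)}$ on $H_\diamond^{-1/2}(\partial\Omega)$ from Appendix~\ref{app:norms}, whose constant $c=c(\Omega,\varsigma_-,\varsigma_+)$ depends on neither $\sigma$ nor $\tau$. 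Writing $B_s:=\Lambda(\sigma)^{1/2}\Lambda_{\tau+s}^{-1}(\sigma)\in\mathscr{L}(L^2_\diamond(\partial\Omega))$ (these commute, being functions of $\Lambda(\sigma)$), integrating over $s$ and applying Cauchy--Schwarz gives
\begin{equation*}
\big|\big\langle g,\,D\!\log(\Lambda_\tau(\sigma);S)f\big\rangle\big| \le c^2\,\|S\|_{\mathscr{L}(H_\diamond^{-1/2}(\partial\Omega),H_\diamond^{1/2}(\partial\Omega))}\Big(\!\int_0^\infty\!\|B_sg\|_{L^2(\partial\Omega)}^2\,{\rm d}s\Big)^{\!1/2}\Big(\!\int_0^\infty\!\|B_sf\|_{L^2(\partial\Omega)}^2\,{\rm d}s\Big)^{\!1/2}.
\end{equation*}

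The decisive point is then the elementary spectral computation that makes the right-hand side uniform: for any $h\in L^2_\diamond(\partial\Omega)$ with eigenexpansion in $\{\phi_k\}$,
\begin{equation*}
\int_0^\infty\|B_sh\|_{L^2(\partial\Omega)}^2\,{\rm d}s = \sum_{k=1}^\infty|\langle h,\phi_k\rangle|^2\,\lambda_k\!\int_0^\infty\!\frac{{\rm d}s}{(\lambda_k+\tau+s)^2} = \sum_{k=1}^\infty|\langle h,\phi_k\rangle|^2\,\frac{\lambda_k}{\lambda_k+\tau} \le \|h\|_{L^2(\partial\Omega)}^2,
\end{equation*}
the final bound being independent of $\tau>0$ and of $\sigma$. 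Combining the last two displays and taking the supremum over $f,g$ in the unit ball of $L^2_\diamond(\partial\Omega)$ yields \eqref{eq:Sbnd} with $C=c^2=C(\Omega,\varsigma_-,\varsigma_+)$. I expect this last part to be the main obstacle: recognizing that the naive norm estimate of \eqref{eq:operatorlog} fails and that $\tau$-uniformity hinges on splitting the integral symmetrically into two square-integrable-in-$s$ factors $\|B_s\,\cdot\,\|_{L^2(\partial\Omega)}$, which in turn forces one to bring in the $\sigma$-uniform equivalent norm on $H_\diamond^{-1/2}(\partial\Omega)$; everything else is routine functional calculus and Bochner-integral bookkeeping.
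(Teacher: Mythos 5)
Your proof is correct, and it splits into one genuinely different half and one essentially identical half. For the differentiability statement and the representation \eqref{eq:operatorlog}, the paper simply cites Pedersen's lecture notes, whereas you give a self-contained argument from the integral formula $\log T = \int_0^\infty\bigl(\tfrac{1}{1+s}I-(T+sI)^{-1}\bigr)\,{\rm d}s$ and two applications of the resolvent identity; your remainder estimate $\|S\|^2\int_0^\infty(\tau+s)^{-2}(\tau/2+s)^{-1}\,{\rm d}s = O(\|S\|^2)$ is valid for fixed $\tau>0$ (the constant degenerates as $\tau\to 0^+$, but that is inherent to the lemma and is exactly why the paper later passes to the limit through the uniform bounds rather than through this differentiability statement). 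This buys a proof readable without external functional-calculus references, at the cost of a page of resolvent bookkeeping the paper avoids. For the bound \eqref{eq:Sbnd} your argument is the paper's argument in all essentials: both pair the two resolvent factors through the $H^{-1/2}_\diamond(\partial\Omega)\times H^{1/2}_\diamond(\partial\Omega)$ dual bracket, pass to the $\sigma$-adapted norm $\|f\|_{-1/2,\sigma}=\|\Lambda(\sigma)^{1/2}f\|_{L^2(\partial\Omega)}$ of \eqref{eq:s_sigma_norm} with the $\sigma$-uniform constants of Theorem~\ref{thm:norm_equi}, and reduce to $\int_0^\infty \lambda_k(\lambda_k+\tau+s)^{-2}\,{\rm d}s=\lambda_k/(\lambda_k+\tau)\le 1$. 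The only cosmetic difference is that you estimate the bilinear form $\langle g,\,\cdot\,f\rangle$ with a Cauchy--Schwarz in $s$, while the paper uses self-adjointness to restrict to the quadratic form $\langle f,\,\cdot\,f\rangle$, where your Cauchy--Schwarz step becomes trivial; your variant is marginally more robust since it never invokes the numerical-range characterization of the norm. You correctly identified the crux: the naive estimate $\|\Lambda_{\tau+s}^{-1}(\sigma)S\Lambda_{\tau+s}^{-1}(\sigma)\|\le(\tau+s)^{-2}\|S\|$ loses $\tau$-uniformity, and the smoothing of $S$ must be spent on converting both resolvent factors into the square-integrable-in-$s$ quantities $\|\Lambda(\sigma)^{1/2}\Lambda_{\tau+s}^{-1}(\sigma)\,\cdot\,\|_{L^2(\partial\Omega)}$.
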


\begin{proof}
  The differentiability result and the representation \eqref{eq:operatorlog} follow from the material in~\cite{Pedersen00};
in particular, see the last formula of Section~4.3 on page 155 in \cite{Pedersen00}.

What remains to be proven is \eqref{eq:Sbnd}. 
By \cite[Theorem 3.7.3, p.~78]{Hille_1957}\footnote{Observe that the Pettis integral is a generalization of the Bochner integral.} 
continuous linear maps commute with Bochner integrals. Hence, we may utilize the self-adjointness of  $D \!\log(\Lambda_\tau(\sigma); S)$ to estimate
	\begin{align}
\label{eq:logdernorm}
	\| D \!\log(\Lambda_\tau(\sigma); S)\|_{\mathscr{L}(L^2_\diamond(\partial\Omega))} \hspace{-2cm}& \nonumber \\[1mm]
	&=
	\sup_{\|f \|_{L^2(\partial\Omega)}=1} \big|\langle f, \, D \!\log(\Lambda_\tau(\sigma); S)f \rangle \big| \nonumber  \\
	&\leq \sup_{\|f \|_{L^2(\partial\Omega)}=1} \int_0^\infty \big| \big\langle f,  \, \Lambda_{\tau+s}^{-1}(\sigma)  S \Lambda_{\tau+s}^{-1}(\sigma) f\big \rangle \big| \, {\rm d} s  \nonumber\\
	&= \sup_{\|f \|_{L^2(\partial\Omega)}=1} \int_0^\infty \big| \big\langle \Lambda_{\tau+s}^{-1}(\sigma)  f,   \, S  \Lambda_{\tau+s}^{-1}(\sigma) f \big \rangle \big| \, {\rm d} s  \nonumber \\
	&\leq \| S \|_{\mathscr{L}(H_\diamond^{-1/2}(\partial\Omega),H_\diamond^{1/2}(\partial\Omega))}\sup_{\|f \|_{L^2(\partial\Omega)}=1} \int_0^\infty \| \Lambda_{\tau+s}^{-1}(\sigma)  f \|_{H^{-1/2}(\partial\Omega)}^2 \, {\rm d} s,
	\end{align}
	where we employed the boundedness of the dual bracket over $H^{-1/2}_\diamond(\partial\Omega) \times H^{1/2}_\diamond(\partial\Omega)$.  

	We now resort to the equivalent norm $\| \cdot  \|_{-1/2,\sigma}$ for $H^{-1/2}_\diamond(\partial \Omega)$ defined by \eqref{eq:s_sigma_norm}, for which it obviously holds $\| \Lambda_{\tau+s}^{-1}(\sigma)  f \|_{-1/2,\sigma}\leq \| \Lambda_{s}^{-1}(\sigma)  f \|_{-1/2,\sigma}$ for any $\tau, s > 0$ and $f \in L^2_{\diamond}(\partial \Omega)$. In particular, we have
	\begin{equation}
	\|\Lambda_{\tau+s}^{-1}(\sigma) f\|_{H^{-1/2}(\partial\Omega)}^2 \leq C \|\Lambda_{s}^{-1}(\sigma) f\|_{-1/2,\sigma}^2 = C \sum_{k=1}^\infty \frac{\lambda_k}{(s+\lambda_k)^2} |\langle f , \phi_k \rangle|^2, \label{eq:laminvts}
	\end{equation}
where $C = C(\Omega, \varsigma_-, \varsigma_+)$ can be chosen to be independent of $\sigma$ itself due to Theorem~\ref{thm:norm_equi}. Denoting $\|\cdot\| = \|\cdot\|_{\mathscr{L}(H_\diamond^{-1/2}(\partial\Omega),H_\diamond^{1/2}(\partial\Omega))}$ and plugging \eqref{eq:laminvts} into \eqref{eq:logdernorm}, we finally obtain
	\begin{equation*}
	\| D \!\log(\Lambda_\tau(\sigma); S)\|_{\mathscr{L}(L^2_\diamond(\partial\Omega))} \leq C \| S \| \sup_{\|f\|_{L^2(\partial\Omega)}=1} \sum_{k=1}^\infty |\langle f , \phi_k \rangle|^2 \int_0^\infty \frac{\lambda_k}{(s+\lambda_k)^2}\, {\rm d} s = C \| S \|,
	\end{equation*} 
	since $\{\phi_k\}_{k\in\mathbb{N}}$ is an orthonormal basis for $L_\diamond^2(\partial\Omega)$.
\end{proof}

Due to \eqref{eq:Sbnd}, the right-hand side of \eqref{eq:operatorlog} defines a self-adjoint operator in $\mathscr{L}(L^2_\diamond(\partial \Omega))$ for any $S\in\mathscr{L}(H_\diamond^{-1/2}(\partial\Omega),H_\diamond^{1/2}(\partial\Omega)) \cap \mathscr{L}_{\rm sa}(L^2_\diamond(\partial \Omega))$ even if $\tau = 0$. This limit behavior will be central in the following.

In Lemma~\ref{lemma:help_diff} the perturbation $S$ is an arbitrary element of $\mathscr{L}_{\rm sa}(L^2_\diamond(\partial \Omega))$. However, we are actually only interested in perturbations of $\Lambda_\tau(\sigma)$ induced by a change in the conductivity $\sigma$. This observation leads to the following proposition.

\begin{proposition}
	\label{prop:logtau_diff}
	For $\tau\geq 0$ and $\sigma\in L^\infty_+(\Omega)$, we define an operator $DF_\tau(\sigma; \, \cdot \, )$ belonging to $\mathscr{L}(L^\infty(\Omega), \mathscr{L}(L^2_\diamond(\partial \Omega)))$ via
	\begin{equation}
	\label{eq:the_derivative}
	DF_\tau(\sigma; \eta) := \int_0^\infty \Lambda_{\tau+s}^{-1}(\sigma) \, D\Lambda(\sigma; \eta) \, \Lambda_{\tau+s}^{-1}(\sigma)\, {\rm d} s.
	\end{equation}
	The following hold:
	\begin{enumerate}[(i)]
		\item If $0 < \varsigma_- \leq \sigma \leq \varsigma_+ < \infty$ almost everywhere in $\Omega$, then
		\begin{equation}
		\label{eq:deriv_norm}
		\| DF_\tau(\sigma; \, \cdot \, ) \|_{\mathscr{L}(L^\infty(\Omega), \mathscr{L}(L^2_\diamond(\partial \Omega)))} \leq
		C,
		\end{equation}
		where $C = C(\Omega, \varsigma_-, \varsigma_+)$ is independent of $\sigma$ and $\tau \geq 0$.
		\item If $\tau>0$, the shifted logarithmic forward operator $\log\!\Lambda_\tau$ defined by \eqref{eq:log_for_tau} is continuously Fr\'echet differentiable with the derivative
		\begin{equation*}
		D\!\log\!\Lambda_{\tau}(\sigma; \, \cdot \,) = DF_\tau(\sigma;\, \cdot \,).
		\end{equation*}
	\end{enumerate}
\end{proposition}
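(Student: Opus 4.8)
The plan is to treat the two claims in turn, leaning on Lemma~\ref{lemma:help_diff} and the chain rule. For claim (i), I would simply combine the bound \eqref{eq:Sbnd} from Lemma~\ref{lemma:help_diff} with the bound \eqref{eq:FrDB} on $D\Lambda$. Indeed, by definition \eqref{eq:the_derivative} we have $DF_\tau(\sigma;\eta) = D\!\log(\Lambda_\tau(\sigma); D\Lambda(\sigma;\eta))$, where $D\Lambda(\sigma;\eta) \in \mathscr{L}(H_\diamond^{-1/2}(\partial\Omega), H_\diamond^{1/2}(\partial\Omega))$ is self-adjoint; hence \eqref{eq:Sbnd} applies with $S = D\Lambda(\sigma;\eta)$ and gives
\[
\| DF_\tau(\sigma;\eta) \|_{\mathscr{L}(L^2_\diamond(\partial\Omega))} \leq C(\Omega,\varsigma_-,\varsigma_+) \, \| D\Lambda(\sigma;\eta) \|_{\mathscr{L}(H_\diamond^{-1/2}(\partial\Omega), H_\diamond^{1/2}(\partial\Omega))} \leq \frac{C'(\Omega,\varsigma_-,\varsigma_+)}{\varsigma_-^2} \, \| \eta \|_{L^\infty(\Omega)},
\]
using $\essinf \sigma \geq \varsigma_-$. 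Taking the supremum over $\|\eta\|_{L^\infty(\Omega)} = 1$ yields \eqref{eq:deriv_norm} with a constant that is manifestly independent of $\sigma$ and of $\tau \geq 0$. I should also note that $DF_\tau(\sigma;\cdot)$ genuinely depends linearly and boundedly on $\eta$: linearity is inherited from the linearity of $\eta \mapsto D\Lambda(\sigma;\eta)$ and the linearity of the Bochner integral, so $DF_\tau(\sigma;\cdot) \in \mathscr{L}(L^\infty(\Omega), \mathscr{L}(L^2_\diamond(\partial\Omega)))$ as claimed.

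For claim (ii), fix $\tau > 0$. The map $\sigma \mapsto \log\!\Lambda_\tau(\sigma)$ factors as $\sigma \mapsto \Lambda_\tau(\sigma) \mapsto \log(\Lambda_\tau(\sigma))$. The inner map $\sigma \mapsto \Lambda_\tau(\sigma)$ is Fr\'echet differentiable as a map $L^\infty_+(\Omega) \to \mathscr{L}(L^2_\diamond(\partial\Omega))$ — indeed it differs from the $\mathscr{L}(H_\diamond^{-1/2}(\partial\Omega), H_\diamond^{1/2}(\partial\Omega))$-valued smooth map $\sigma \mapsto \Lambda(\sigma)$ by the constant $\tau I$, and composing with the continuous embedding $\mathscr{L}(H_\diamond^{-1/2}(\partial\Omega), H_\diamond^{1/2}(\partial\Omega)) \hookrightarrow \mathscr{L}(L^2_\diamond(\partial\Omega))$ preserves differentiability, with derivative $D\Lambda(\sigma;\eta)$ viewed in $\mathscr{L}(L^2_\diamond(\partial\Omega))$. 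The outer map $T \mapsto \log T$ is Fr\'echet differentiable at $\Lambda_\tau(\sigma)$ by Lemma~\ref{lemma:help_diff}, but one has to be mildly careful here: Lemma~\ref{lemma:help_diff} is stated for self-adjoint perturbations, whereas the chain rule wants differentiability on an open neighborhood in $\mathscr{L}(L^2_\diamond(\partial\Omega))$. This is not an actual obstacle, since the Riesz--Dunford representation \eqref{eq:riesz-dunford} shows $T \mapsto \log T$ is analytic (in particular Fr\'echet differentiable) on the open set of operators whose spectrum is enclosed by a fixed Jordan curve avoiding the negative real axis — a neighborhood of $\Lambda_\tau(\sigma)$ in all of $\mathscr{L}(L^2_\diamond(\partial\Omega))$ — and on self-adjoint arguments this derivative must coincide with the one in \eqref{eq:operatorlog} by uniqueness of the Fr\'echet derivative. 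The chain rule for Banach spaces then gives that $\log\!\Lambda_\tau$ is Fr\'echet differentiable with
\[
D\!\log\!\Lambda_\tau(\sigma;\eta) = D\!\log\!\big(\Lambda_\tau(\sigma); D\Lambda(\sigma;\eta)\big) = \int_0^\infty \Lambda_{\tau+s}^{-1}(\sigma)\, D\Lambda(\sigma;\eta)\, \Lambda_{\tau+s}^{-1}(\sigma)\, {\rm d} s = DF_\tau(\sigma;\eta),
\]
which is exactly the asserted formula.

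It remains to check that $\sigma \mapsto D\!\log\!\Lambda_\tau(\sigma;\cdot) = DF_\tau(\sigma;\cdot)$ is continuous from $L^\infty_+(\Omega)$ into $\mathscr{L}(L^\infty(\Omega), \mathscr{L}(L^2_\diamond(\partial\Omega)))$. For this I would write, for $\sigma_1, \sigma_2 \in L^\infty_+(\Omega)$ and $\eta \in L^\infty(\Omega)$,
\begin{align*}
DF_\tau(\sigma_2;\eta) - DF_\tau(\sigma_1;\eta) = \int_0^\infty \big( \Lambda_{\tau+s}^{-1}(\sigma_2) - \Lambda_{\tau+s}^{-1}(\sigma_1) \big) D\Lambda(\sigma_2;\eta)\, \Lambda_{\tau+s}^{-1}(\sigma_2)\, {\rm d} s \\
{} + \int_0^\infty \Lambda_{\tau+s}^{-1}(\sigma_1) \big( D\Lambda(\sigma_2;\eta) - D\Lambda(\sigma_1;\eta) \big) \Lambda_{\tau+s}^{-1}(\sigma_2)\, {\rm d} s \\
{} + \int_0^\infty \Lambda_{\tau+s}^{-1}(\sigma_1)\, D\Lambda(\sigma_1;\eta) \big( \Lambda_{\tau+s}^{-1}(\sigma_2) - \Lambda_{\tau+s}^{-1}(\sigma_1) \big)\, {\rm d} s,
\end{align*}
and estimate each term by the same scheme as in the proof of \eqref{eq:Sbnd}: pair against a unit $f \in L^2_\diamond(\partial\Omega)$, move one resolvent factor onto $f$, use the $H^{-1/2}$--$H^{1/2}$ duality pairing and the equivalent norm $\|\cdot\|_{-1/2,\sigma}$ together with Theorem~\ref{thm:norm_equi} to control $\int_0^\infty \|\Lambda_{\tau+s}^{-1}(\sigma_j) f\|_{H^{-1/2}(\partial\Omega)}^2 \,{\rm d}s$ uniformly in $\tau$. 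The middle term is then handled by the continuity of $\sigma \mapsto D\Lambda(\sigma;\cdot)$ in $\mathscr{L}(L^\infty(\Omega), \mathscr{L}(H_\diamond^{-1/2}(\partial\Omega), H_\diamond^{1/2}(\partial\Omega)))$, and the first and third by the continuity of $\sigma \mapsto \Lambda_{\tau+s}^{-1}(\sigma)$ (locally uniformly in $s$, with an integrable tail bound), noting $\Lambda_{\tau+s}^{-1}(\sigma_2) - \Lambda_{\tau+s}^{-1}(\sigma_1) = \Lambda_{\tau+s}^{-1}(\sigma_1)(\Lambda(\sigma_1) - \Lambda(\sigma_2))\Lambda_{\tau+s}^{-1}(\sigma_2)$ and that $\Lambda(\sigma_1) - \Lambda(\sigma_2) \in \mathscr{L}(H_\diamond^{-1/2}(\partial\Omega), H_\diamond^{1/2}(\partial\Omega))$ is small when $\|\sigma_1 - \sigma_2\|_{L^\infty}$ is. The step I expect to require the most care is precisely this continuity argument — specifically, making the $s$-integrals converge uniformly after each resolvent difference is inserted, which is exactly where the equivalent-norm machinery of Theorem~\ref{thm:norm_equi} and the monotonicity $\|\Lambda_{\tau+s}^{-1}(\sigma)f\|_{-1/2,\sigma} \leq \|\Lambda_s^{-1}(\sigma)f\|_{-1/2,\sigma}$ used in Lemma~\ref{lemma:help_diff} earn their keep; the differentiability itself is a more or less mechanical application of the chain rule once Lemma~\ref{lemma:help_diff} is in hand.
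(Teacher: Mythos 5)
Your proof is correct and, for the core of the statement, follows the same route as the paper: part (i) is exactly the paper's argument (apply \eqref{eq:Sbnd} with $S = D\Lambda(\sigma;\eta)$ and then \eqref{eq:FrDB}), and part (ii) is the same chain-rule composition of $\sigma\mapsto\Lambda_\tau(\sigma)$ with the operator logarithm. You diverge from the paper in two places, both legitimately. First, you justify the applicability of the chain rule by invoking analyticity of $T\mapsto\log T$ via the Riesz--Dunford formula \eqref{eq:riesz-dunford} on an open neighbourhood in all of $\mathscr{L}(L^2_\diamond(\partial\Omega))$ and then matching derivatives on self-adjoint perturbations; the paper instead simply cites \cite{Pedersen00} for differentiability and for continuity of the derivative of the logarithm near strictly positive definite self-adjoint operators. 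Your elaboration is a genuine strengthening of the exposition, since Lemma~\ref{lemma:help_diff} as stated only treats self-adjoint perturbations. Second, for the continuity of $\sigma\mapsto DF_\tau(\sigma;\cdot)$ you propose a direct three-term telescoping of $DF_\tau(\sigma_2;\eta)-DF_\tau(\sigma_1;\eta)$ estimated with the resolvent identity and the equivalent norms of Theorem~\ref{thm:norm_equi}; the paper gets continuity for free from the chain rule (continuity of the derivative of the operator logarithm composed with continuity of $\sigma\mapsto\Lambda(\sigma)$ and $\sigma\mapsto D\Lambda(\sigma;\cdot)$), and only later, in the proof of Theorem~\ref{thm:main}, derives an explicit Lipschitz estimate via the second derivative of Proposition~\ref{prop:logtau_2diff} and the mean-value theorem. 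Your hands-on decomposition does go through (each term is controlled exactly as the terms $I_2,I_3$ in the proof of Proposition~\ref{prop:logtau_2diff}, with $\int_0^\infty\|\Lambda_{\tau+s}^{-1}(\sigma)f\|_{H^{-1/2}(\partial\Omega)}^2\,{\rm d}s\leq C\|f\|_{L^2(\partial\Omega)}^2$ supplying integrability) and in fact yields a local Lipschitz bound, which is more than the proposition asks for; the cost is that it duplicates work the paper postpones to its second-derivative machinery.
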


\begin{proof}
	For $\log\!\Lambda_{\tau}$ with $\tau>0$, the continuous Fr\'echet differentiability and the representation \eqref{eq:the_derivative} for its derivative follow from the combination of Lemma~\ref{lemma:help_diff}, the continuous Fr\'echet differentiability of the standard forward map $L^\infty_+(\Omega) \ni \sigma \mapsto \Lambda(\sigma) \in \mathscr{L}(H^{-1/2}_\diamond(\partial \Omega), H^{1/2}_\diamond(\partial \Omega)) \subset \mathscr{L}(L^2_\diamond(\partial \Omega))$ considered in Appendix~\ref{app:derivatives}, the chain rule for Banach spaces, and the continuous Fr\'echet differentiability of the operator logarithm. Indeed, the derivative of the operator logarithm is continuous in some neighborhood of any strictly positive definite element of $\mathscr{L}_{\rm sa}(L^2_\diamond(\partial \Omega))$, which applies in particular to $\Lambda_\tau(\sigma)$ for any $\sigma \in L^\infty_+(\Omega)$ and $\tau>0$ (cf.~\cite{Pedersen00}). See also the comment succeeding Remark~\ref{remark:N-to-R}.
	
	To prove the bound \eqref{eq:deriv_norm}, we first of all note that  $D\Lambda(\sigma; \eta): L^2_\diamond(\partial \Omega) \to L^2_\diamond(\partial \Omega)$ is self-adjoint for any $\sigma \in L^\infty_+(\Omega)$ and $\eta \in L^\infty(\Omega)$; see Appendix~\ref{app:derivatives}. By virtue of \eqref{eq:Sbnd}, the comment after the proof of Lemma~\ref{lemma:help_diff}, and \eqref{eq:FrDB}, we thus get
	\begin{equation*}
	\| DF_\tau(\sigma; \eta)\|_{\mathscr{L}(L^2_\diamond(\partial \Omega))} \leq C(\Omega, \varsigma_-, \varsigma_+)  \|\eta \|_{L^\infty(\Omega)},
	\end{equation*}
	for any $\tau \geq 0$. The claim then follows by taking the supremum over $\eta \in L^\infty(\Omega)$ satisfying $\| \eta \|_{L^\infty(\Omega)} = 1$.
\end{proof}

According to Proposition~\ref{prop:logtau_diff}, $DF_\tau(\sigma; \eta): L^2_\diamond(\partial \Omega) \to L^2_\diamond(\partial \Omega)$ is well defined and bounded even for $\tau=0$. In particular, $DF_0(\sigma; \, \cdot \,)$ provides the natural candidate for the Fr\'echet derivative of the (unshifted) logarithmic forward map from \eqref{eq:log_for2}. 

We next introduce an alternative representation for $DF_\tau(\sigma; \eta)$, arguably more suitable for numerical considerations. Take note that this formula is known to hold for $\tau > 0$ due to,~e.g.,~\cite[Corollary~2.3]{Gilliam_2009} and employing the Riesz--Dunford formula \eqref{eq:riesz-dunford} to define $\log\!\Lambda_{\tau}(\sigma)$. However, we also consider the case $\tau=0$ here. The following proposition can be considered a prequel for Corollary~\ref{cor:DLformula}.
\begin{proposition} \label{prop:DLambdaformula}
	Let $\tau\geq 0$, $\sigma\in L^\infty_+(\Omega)$, and $\eta\in L^\infty(\Omega)$. The following representation in the eigenbasis $\{\phi_k\}_{k\in\mathbb{N}}$ is valid for any $f\in L^2_\diamond(\partial\Omega)$:
	\begin{equation*} 
	DF_\tau(\sigma; \eta)f = \sum_{j=1}^\infty \sum_{k=1}^\infty c_{j,k,\tau}\, \langle f,\phi_k\rangle \big\langle D\Lambda(\sigma,\eta)\phi_k,\phi_j\big\rangle \, \phi_j,
	\end{equation*}
	where 
	\begin{equation*}
	c_{j,k,\tau} := \begin{cases}
	\frac{\log(\lambda_j+\tau)-\log(\lambda_k+\tau)}{\lambda_j-\lambda_k}, &\quad \lambda_j\neq \lambda_k, \\[1mm]
	\frac{1}{\lambda_j+\tau}, & \quad \lambda_j=\lambda_k.
	\end{cases}
	\end{equation*}	
\end{proposition}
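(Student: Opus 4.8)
The plan is to read off the matrix of $DF_\tau(\sigma;\eta)$ in the eigenbasis $\{\phi_k\}_{k\in\mathbb{N}}$ directly from the integral representation \eqref{eq:the_derivative} of Proposition~\ref{prop:logtau_diff}, evaluate the resulting scalar integrals, and then reassemble the double series. This treats all $\tau\ge 0$ at once; for $\tau>0$ the statement is in any case also covered by \cite[Corollary~2.3]{Gilliam_2009}.

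\emph{Matrix elements.} First, fix $j,k\in\mathbb{N}$ and $s>0$ with $\tau+s>0$. Each $\phi_\ell$ is an eigenfunction of $\Lambda(\sigma)$ for the eigenvalue $\lambda_\ell$, hence also an eigenfunction of the self-adjoint isomorphism $\Lambda_{\tau+s}^{-1}(\sigma)\in\mathscr{L}(L^2_\diamond(\partial\Omega))$ for the eigenvalue $(\lambda_\ell+\tau+s)^{-1}$. Using this together with the self-adjointness of $\Lambda_{\tau+s}^{-1}(\sigma)$ on $L^2_\diamond(\partial\Omega)$ one obtains
\begin{equation*}
\big\langle \Lambda_{\tau+s}^{-1}(\sigma)\, D\Lambda(\sigma;\eta)\, \Lambda_{\tau+s}^{-1}(\sigma)\phi_k,\phi_j\big\rangle
= \frac{\big\langle D\Lambda(\sigma;\eta)\phi_k,\phi_j\big\rangle}{(\lambda_j+\tau+s)(\lambda_k+\tau+s)}.
\end{equation*}
Since $T\mapsto\langle T\phi_k,\phi_j\rangle$ is a bounded linear functional on $\mathscr{L}(L^2_\diamond(\partial\Omega))$, it commutes with the (improper, if $\tau=0$) Bochner integral defining $DF_\tau(\sigma;\eta)$ — exactly as in the proof of Lemma~\ref{lemma:help_diff}, now with $S = D\Lambda(\sigma;\eta)$ — so that
\begin{equation*}
\big\langle DF_\tau(\sigma;\eta)\phi_k,\phi_j\big\rangle
= \big\langle D\Lambda(\sigma;\eta)\phi_k,\phi_j\big\rangle \int_0^\infty \frac{{\rm d}s}{(\lambda_j+\tau+s)(\lambda_k+\tau+s)},
\end{equation*}
the scalar integral converging absolutely because $\lambda_j+\tau,\lambda_k+\tau>0$. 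A partial-fraction computation when $\lambda_j\neq\lambda_k$, and a direct one when $\lambda_j=\lambda_k$, evaluates this integral to exactly $c_{j,k,\tau}$, so $\langle DF_\tau(\sigma;\eta)\phi_k,\phi_j\rangle = c_{j,k,\tau}\langle D\Lambda(\sigma;\eta)\phi_k,\phi_j\rangle$.

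\emph{From matrix elements to the formula.} For arbitrary $f\in L^2_\diamond(\partial\Omega)$ write $f = \sum_k\langle f,\phi_k\rangle\phi_k$, convergent in $L^2_\diamond(\partial\Omega)$. By continuity of $DF_\tau(\sigma;\eta)$ and of $\langle\,\cdot\,,\phi_j\rangle$,
\begin{equation*}
\big\langle DF_\tau(\sigma;\eta)f,\phi_j\big\rangle = \sum_{k=1}^\infty c_{j,k,\tau}\,\langle f,\phi_k\rangle\big\langle D\Lambda(\sigma;\eta)\phi_k,\phi_j\big\rangle,
\end{equation*}
and this inner series even converges absolutely: the Cauchy--Schwarz inequality gives $c_{j,k,\tau}\le(\lambda_j+\tau)^{-1/2}(\lambda_k+\tau)^{-1/2}\le(\lambda_j\lambda_k)^{-1/2}$, while the boundedness $D\Lambda(\sigma;\eta)\colon H^{-1/2}_\diamond(\partial\Omega)\to H^{1/2}_\diamond(\partial\Omega)$ from \eqref{eq:FrDB}, its self-adjointness on $L^2_\diamond(\partial\Omega)$, and the equivalent norms of Appendix~\ref{app:norms} (for which $\|\phi_j\|_{-1/2,\sigma}^2=\lambda_j$) yield $\sum_k\lambda_k^{-1}|\langle D\Lambda(\sigma;\eta)\phi_k,\phi_j\rangle|^2\le C\lambda_j$; combining the two with Cauchy--Schwarz bounds the $k$-sum by a constant multiple of $\|f\|_{L^2(\partial\Omega)}$, uniformly in $j$. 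Expanding $DF_\tau(\sigma;\eta)f = \sum_j\langle DF_\tau(\sigma;\eta)f,\phi_j\rangle\,\phi_j$ in the orthonormal basis $\{\phi_j\}$, with the outer series convergent in $L^2_\diamond(\partial\Omega)$ by Parseval, then produces the asserted double sum.

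\emph{Main obstacle.} The only delicate point will be the interchange of $\langle\,\cdot\,,\phi_j\rangle$ with the integral when $\tau=0$: there $s\mapsto\Lambda_s^{-1}(\sigma)D\Lambda(\sigma;\eta)\Lambda_s^{-1}(\sigma)$ is not Bochner integrable in $\mathscr{L}(L^2_\diamond(\partial\Omega))$ on $(0,\infty)$, its operator norm behaving like $s^{-1}$ near $s=0$, so one must rely on the improper/strong-operator sense in which \eqref{eq:the_derivative} is meant at $\tau=0$, as furnished by Proposition~\ref{prop:logtau_diff} and Lemma~\ref{lemma:help_diff}. After pairing against a fixed $\phi_j$, however, the integrand decays like $s^{-2}$ at infinity and stays bounded near the origin, so the manipulation is legitimate; everything else is routine bookkeeping with the eigenbasis.
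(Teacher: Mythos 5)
Your proposal is correct and follows essentially the same route as the paper's proof: expand $DF_\tau(\sigma;\eta)$ in the eigenbasis, pull the dual pairing inside the integral representation \eqref{eq:the_derivative}, use the spectral decomposition of $\Lambda_{\tau+s}^{-1}(\sigma)$ to reduce the matrix element to the scalar integral $\int_0^\infty (\lambda_j+\tau+s)^{-1}(\lambda_k+\tau+s)^{-1}\,{\rm d}s = c_{j,k,\tau}$. Your extra care about absolute convergence of the inner series and the improper-integral subtlety at $\tau=0$ goes slightly beyond what the paper records, but does not change the argument.
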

\begin{proof}
	Since $DF_\tau(\sigma; \eta) \in \mathscr{L}(L^2_\diamond(\partial\Omega))$ for $\tau\geq 0$ by Proposition~\ref{prop:logtau_diff}, we are allowed to write
	\begin{equation*}
	DF_\tau(\sigma; \eta)f = \sum_{j=1}^\infty \sum_{k=1}^\infty \langle f,\phi_k\rangle \big\langle DF_\tau(\sigma; \eta)\phi_k,\phi_j \big\rangle \phi_j.
	\end{equation*}	
	The proof is concluded by expanding $\langle DF_\tau(\sigma; \eta)\phi_k,\phi_j\rangle$. To this end, we employ \eqref{eq:the_derivative} and the spectral decomposition of $\Lambda_{\tau+s}^{-1}(\sigma)$ to deduce
	\begin{align*}
	\langle DF_\tau(\sigma; \eta)\phi_k,\phi_j\rangle \hspace{-2cm}&\\[1mm]
	&= \int_0^\infty \big\langle D\Lambda(\sigma;\eta)\Lambda_{\tau+s}^{-1}(\sigma)\phi_k,\Lambda_{\tau+s}^{-1}(\sigma)\phi_j \big\rangle \, {\rm d} s \\
	&= \int_0^\infty \sum_{m=1}^\infty\sum_{n=1}^\infty \frac{1}{(\lambda_m+\tau+s)(\lambda_n+\tau+s)}\langle \phi_k,\phi_m\rangle \big\langle D\Lambda(\sigma;\eta)\phi_m,\phi_n \big \rangle \overline{\langle \phi_j,\phi_n\rangle} \, {\rm d} s \\
	&= \,\langle D\Lambda(\sigma;\eta)\phi_k,\phi_j \rangle \int_0^\infty \frac{1}{(\lambda_k+\tau+s)(\lambda_j+\tau+s)} \, {\rm d} s,
	\end{align*}
	where the integral over $s$ gives $c_{j,k,\tau}$.
\end{proof}

To complete this section, we still need to consider the second Fr\'echet derivative of $\log\!\Lambda_{\tau}$ as well as its uniform boundedness with respect to $\tau>0$.

\begin{proposition}
	\label{prop:logtau_2diff}
	For $\tau\geq 0$ and $\sigma\in L^\infty_+(\Omega)$, we define an operator $D^2F_\tau(\sigma; \, \cdot \,, \, \cdot \, )$ belonging to $\mathscr{L}(L^\infty(\Omega)^2,\mathscr{L}(L^2_\diamond(\partial \Omega)))$ via
	\begin{align}
	\label{eq:second_derivative}
	D^2F_\tau(\sigma; \eta, \xi)  &:= \int_0^\infty \Lambda_{\tau+s}^{-1}(\sigma) \, D^2\Lambda(\sigma; \eta, \xi) \, \Lambda_{\tau+s}^{-1}(\sigma) \, {\rm d} s \\
	& \phantom{{}:={}} - \int_0^\infty \Lambda_{\tau+s}^{-1}(\sigma) \,  D\Lambda(\sigma; \eta) \, \Lambda_{\tau+s}^{-1}(\sigma) \, D\Lambda(\sigma; \xi) \, \Lambda_{\tau+s}^{-1}(\sigma) \, {\rm d} s \nonumber \\
	& \phantom{{}:={}} - \int_0^\infty \Lambda_{\tau+s}^{-1}(\sigma) \,  D\Lambda(\sigma; \xi) \, \Lambda_{\tau+s}^{-1}(\sigma) \, D\Lambda(\sigma; \eta) \, \Lambda_{\tau+s}^{-1}(\sigma) \, {\rm d} s. \nonumber
	\end{align}
	The following hold:
	\begin{enumerate}[(i)]
		\item If $0 < \varsigma_- \leq \sigma \leq \varsigma_+ < \infty$ almost everywhere in $\Omega$, then
		\begin{equation}
		\label{eq:2deriv_norm}
		\| D^2F_\tau(\sigma; \, \cdot \, , \, \cdot \,) \|_{\mathscr{L}(L^\infty(\Omega)^2,\mathscr{L}(L^2_\diamond(\partial \Omega)))} \leq C ,
		\end{equation}
		where $C = C(\Omega, \varsigma_-, \varsigma_+)$ is independent of $\sigma$ and $\tau \geq 0$.
		\item If $\tau>0$, the shifted logarithmic forward operator $\log\!\Lambda_\tau$ defined by \eqref{eq:log_for_tau} is twice Fr\'echet differentiable with the second order derivative
		\begin{equation*}
		D^2\!\log\!\Lambda_{\tau}(\sigma; \, \cdot \, , \, \cdot \,) = D^2F_\tau(\sigma;\, \cdot \, , \, \cdot \,).
		\end{equation*}
	\end{enumerate}
\end{proposition}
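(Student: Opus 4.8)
The plan is to mirror the structure of the proofs of Lemma~\ref{lemma:help_diff} and Proposition~\ref{prop:logtau_diff}: first establish the bound \eqref{eq:2deriv_norm} by a direct estimate of each of the three Bochner integrals, and then obtain part (ii) by differentiating the first-order formula \eqref{eq:the_derivative} once more via the chain rule and product rule for Fr\'echet derivatives. For part (i), I would estimate the operator norm of each integrand in $\mathscr{L}(L^2_\diamond(\partial\Omega))$ by inserting the resolvents $\Lambda_{\tau+s}^{-1}(\sigma)$ in the "right" places. The key observation is exactly the one used in Lemma~\ref{lemma:help_diff}: each factor $\Lambda_{\tau+s}^{-1}(\sigma)$ maps $L^2_\diamond(\partial\Omega)$ into $H^{-1/2}_\diamond(\partial\Omega)$ with the quantitative control $\|\Lambda_{\tau+s}^{-1}(\sigma)f\|_{H^{-1/2}(\partial\Omega)}^2 \le C\sum_k \lambda_k(s+\lambda_k)^{-2}|\langle f,\phi_k\rangle|^2$ (with $C$ uniform in $\sigma,\tau$ by Theorem~\ref{thm:norm_equi}), while $D\Lambda(\sigma;\eta)$ and $D^2\Lambda(\sigma;\eta,\xi)$ are smoothing, mapping $H^{-1/2}_\diamond(\partial\Omega)$ boundedly into $H^{1/2}_\diamond(\partial\Omega)$ with norms controlled by \eqref{eq:FrDB} and \eqref{eq:FrDB2}. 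So in, say, the second integral I would pair $f$ against the integrand, write it as $\langle D\Lambda(\sigma;\xi)\Lambda_{\tau+s}^{-1}f,\, \Lambda_{\tau+s}^{-1}D\Lambda(\sigma;\eta)\Lambda_{\tau+s}^{-1}f\rangle$ using self-adjointness, bound the middle $\|D\Lambda(\sigma;\xi)\Lambda_{\tau+s}^{-1}f\|_{H^{-1/2}(\partial\Omega)} \le \|D\Lambda(\sigma;\xi)\|_{\mathscr{L}(H^{-1/2},H^{1/2})}\|\Lambda_{\tau+s}^{-1}f\|_{H^{-1/2}(\partial\Omega)}$ and similarly for the outer resolvent acting on the smoothed vector, leaving an $s$-integral of the scalar quantity $\sum_k \lambda_k(s+\lambda_k)^{-2}|\langle f,\phi_k\rangle|^2$, which integrates to $\sum_k |\langle f,\phi_k\rangle|^2 = \|f\|^2_{L^2(\partial\Omega)}$ as in Lemma~\ref{lemma:help_diff}. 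The first integral is handled identically (it is literally $D\!\log(\Lambda_\tau(\sigma);D^2\Lambda(\sigma;\eta,\xi))$, so \eqref{eq:Sbnd} with \eqref{eq:FrDB2} applies directly), and the third is the second with $\eta,\xi$ swapped. Summing the three contributions gives \eqref{eq:2deriv_norm} with $C = C(\Omega,\varsigma_-,\varsigma_+)$ independent of $\tau\ge 0$ after taking the supremum over unit $\eta,\xi$.

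For part (ii), with $\tau>0$ fixed, Proposition~\ref{prop:logtau_diff}(ii) already gives $D\!\log\!\Lambda_\tau(\sigma;\eta) = DF_\tau(\sigma;\eta) = \int_0^\infty \Lambda_{\tau+s}^{-1}(\sigma)\,D\Lambda(\sigma;\eta)\,\Lambda_{\tau+s}^{-1}(\sigma)\,{\rm d}s$, and it suffices to differentiate the map $\sigma\mapsto DF_\tau(\sigma;\eta)$ in $\mathscr{L}(L^2_\diamond(\partial\Omega))$ for each fixed $\eta$, then check joint continuity. I would use the product rule: the integrand $\Lambda_{\tau+s}^{-1}(\sigma)\,D\Lambda(\sigma;\eta)\,\Lambda_{\tau+s}^{-1}(\sigma)$ is a composition of three $\sigma$-dependent factors. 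The middle factor $\sigma\mapsto D\Lambda(\sigma;\eta)$ is continuously Fr\'echet differentiable (Appendix~\ref{app:derivatives}) with derivative $D^2\Lambda(\sigma;\eta,\xi)$ in the direction $\xi$. For the outer factors, the map $\sigma\mapsto\Lambda_{\tau+s}(\sigma)$ is Fr\'echet differentiable with derivative $D\Lambda(\sigma;\xi)$ (the shift $\tau I$ is $\sigma$-independent, cf.~Remark~\ref{remark:N-to-R} and the comment after it), and since $\Lambda_{\tau+s}(\sigma)$ is invertible in $\mathscr{L}(L^2_\diamond(\partial\Omega))$ with uniformly bounded inverse for $s\ge 0$, inversion is Fr\'echet differentiable there with the standard formula $D[\Lambda_{\tau+s}^{-1}(\sigma);\xi] = -\Lambda_{\tau+s}^{-1}(\sigma)\,D\Lambda(\sigma;\xi)\,\Lambda_{\tau+s}^{-1}(\sigma)$. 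Applying the product rule to the triple product and collecting terms produces exactly the three integrands of \eqref{eq:second_derivative}. To justify differentiating under the integral sign, I would note that the integrand and its $\sigma$-derivative are, for $\tau>0$, dominated uniformly in $s$ on a bounded neighborhood of $\sigma$ by an integrable function of $s$ (using, e.g., $\|\Lambda_{\tau+s}^{-1}(\sigma)\|_{\mathscr{L}(L^2_\diamond)}\le (\tau+s)^{-1}$ together with the smoothing bounds; in fact the bound from part (i) already supplies such a dominating function), so the Bochner-integral-valued map is differentiable with derivative obtained by differentiating the integrand. Continuity of $\sigma\mapsto D^2F_\tau(\sigma;\eta,\xi)$ in $\mathscr{L}(L^2_\diamond(\partial\Omega))$ then follows from continuity of $\sigma\mapsto\Lambda_{\tau+s}^{-1}(\sigma)$, $\sigma\mapsto D\Lambda(\sigma;\cdot)$, $\sigma\mapsto D^2\Lambda(\sigma;\cdot,\cdot)$ (all from Appendix~\ref{app:derivatives} and resolvent continuity) together with dominated convergence.

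The main obstacle I anticipate is \emph{not} the differentiation step but verifying that the estimate \eqref{eq:2deriv_norm} truly holds uniformly down to $\tau = 0$: the factor $\Lambda_{\tau+s}^{-1}(\sigma)$ blows up like $(\tau+s)^{-1}$ near $s=0$, and in the two "cubic" integrals there are \emph{three} such factors with only \emph{two} smoothing operators to absorb them, so a naive $\mathscr{L}(L^2)$ estimate of the integrand diverges at $s=0$. The resolution — and the technical heart of the argument — is that one must not estimate factor-by-factor in the $L^2$ operator norm but rather pair against $f$ and distribute as above so that \emph{each} resolvent acts as a map $L^2_\diamond\to H^{-1/2}_\diamond$ (picking up the integrable weight $\lambda_k(s+\lambda_k)^{-2}$ via the equivalent norm of Theorem~\ref{thm:norm_equi}) while each $D\Lambda(\sigma;\cdot)$ converts an $H^{-1/2}_\diamond$ vector back to $H^{1/2}_\diamond$; the middle resolvent in the cubic terms then simply pairs an $H^{-1/2}_\diamond$ vector with an $H^{1/2}_\diamond$ vector through the dual bracket without needing any decay beyond $\|\Lambda_{\tau+s}^{-1}(\sigma)\|_{\mathscr{L}(H^{-1/2}_\diamond)}\le 1$ (which holds for all $s>0$ since the eigenvalues of $\Lambda_{\tau+s}^{-1}(\sigma)$ are bounded by $(\tau+s)^{-1}\le s^{-1}$ and, more to the point, by the monotonicity $\|\Lambda_{\tau+s}^{-1}(\sigma)f\|_{-1/2,\sigma}\le\|\Lambda_s^{-1}(\sigma)f\|_{-1/2,\sigma}$ already exploited in Lemma~\ref{lemma:help_diff}). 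Once the pairing is organized this way, the $s$-integral is exactly the convergent one from Lemma~\ref{lemma:help_diff} and the uniformity in $\tau\ge 0$ and $\sigma\in\{\varsigma_-\le\sigma\le\varsigma_+\}$ comes for free from Theorem~\ref{thm:norm_equi}, \eqref{eq:FrDB}, and \eqref{eq:FrDB2}.
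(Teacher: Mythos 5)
Your proposal is correct and follows essentially the same route as the paper: the first integral is exactly $D\!\log(\Lambda_\tau(\sigma);D^2\Lambda(\sigma;\eta,\xi))$ handled by \eqref{eq:Sbnd} and \eqref{eq:FrDB2}, the two cubic terms are controlled by letting the outer resolvents absorb the integrable weight $\lambda_k(s+\lambda_k)^{-2}$ while the middle resolvent is only required to be uniformly bounded from $H^{1/2}_\diamond(\partial\Omega)$ to $H^{-1/2}_\diamond(\partial\Omega)$ (the paper gets this from the equivalent norms of Theorem~\ref{thm:norm_equi} and the monotonicity in Lemma~\ref{lemma:inverse}, which is what you need rather than the slightly misstated bound $\|\Lambda_{\tau+s}^{-1}(\sigma)\|_{\mathscr{L}(H^{-1/2}_\diamond(\partial\Omega))}\leq 1$), and part (ii) follows from the product rule, the resolvent differentiation formula, and differentiation under the Bochner integral (the paper cites Hille's theorem, justified by the bound from part (i)).
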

\begin{proof}
	For $\log\!\Lambda_{\tau}$ with $\tau > 0$, the representation \eqref{eq:second_derivative} for the second derivative is a consequence of Proposition~\ref{prop:logtau_diff}, the product rule for Banach spaces, the differentiation formula 
	\begin{equation*}
	D\Lambda_{\tau+s}^{-1}(\sigma; \, \cdot \, ) = -\Lambda_{\tau+s}^{-1}(\sigma)D\Lambda_{\tau+s}(\sigma;\, \cdot \, )\Lambda_{\tau+s}^{-1}(\sigma) = -\Lambda_{\tau+s}^{-1}(\sigma)D\Lambda(\sigma;\, \cdot \, )\Lambda_{\tau+s}^{-1}(\sigma),
	\end{equation*}
	and Hille's theorem (see \cite[Lemma~1]{Hille_1952} or \cite[Theorem 3.7.12, p.~83]{Hille_1957}) that requires the Bochner integrals resulting from the differentiation under the integral sign are well defined. This condition is satisfied by virtue of \eqref{eq:2deriv_norm} that is established below.
	
	The bound \eqref{eq:2deriv_norm} follows from the triangle inequality after separately estimating the three terms $I_1$, $I_2$, and $I_3$ (defined in the order they appear) on the right-hand side of \eqref{eq:second_derivative}. The first one can be handled by resorting to \eqref{eq:Sbnd} and \eqref{eq:FrDB2}:
	\begin{equation*}
	\|I_1\|_{\mathscr{L}(L^2_\diamond(\partial \Omega))} \leq C\|\eta\|_{L^\infty(\Omega)}\|\xi\|_{L^\infty(\Omega)},
	\end{equation*}	
where $C = C(\Omega, \varsigma_-,\varsigma_+)> 0$ can be chosen to be independent of $\sigma$.
	Combining \eqref{eq:Sbnd} next with \eqref{eq:FrDB}, it is easy to see that the other two terms satisfy
	\begin{equation*}
	\|I_j\|_{\mathscr{L}(L^2_\diamond(\partial \Omega))} \leq C\|\eta\|_{L^\infty(\Omega)} \|\xi\|_{L^\infty(\Omega)} \| \Lambda_{\tau+s}^{-1}(\sigma)\|_{\mathscr{L}(H_\diamond^{1/2}(\partial\Omega),H^{-1/2}_\diamond(\partial\Omega))}, \qquad j=2,3,
	\end{equation*}
where  $C = C(\Omega, \varsigma_-,\varsigma_+)> 0$ can once again be chosen independently of the actual $\sigma$.
	To estimate $ \| \Lambda_{\tau+s}^{-1}(\sigma) \|_{\mathscr{L}(H_\diamond^{1/2}(\partial\Omega),H^{-1/2}_\diamond(\partial\Omega))}$, we first use the equivalent norm $\| \cdot \|_{1/2, \sigma}$ for $H^{1/2}_\diamond(\partial \Omega)$ defined by \eqref{eq:s_sigma_norm} along with Theorem~\ref{thm:norm_equi} to deduce
	\begin{align*}
	\| \Lambda_{\tau+s}^{-1}(\sigma) \|_{\mathscr{L}(H_\diamond^{1/2}(\partial\Omega),H^{-1/2}_\diamond(\partial\Omega))} &\leq C(\Omega, \varsigma_-,\varsigma_+) \| \Lambda^{-1}(\sigma) \|_{\mathscr{L}(H_\diamond^{1/2}(\partial\Omega),H^{-1/2}_\diamond(\partial\Omega))} \\
	&\leq C(\Omega, \varsigma_-,\varsigma_+)\| \Lambda^{-1}(\varsigma_+) \|_{\mathscr{L}(H_\diamond^{1/2}(\partial\Omega),H^{-1/2}_\diamond(\partial\Omega))} \\
&\leq C(\Omega, \varsigma_-,\varsigma_+),
	\end{align*}
where the second step is a consequence of the monotonicity relation in Lemma~\ref{lemma:inverse} for $\sigma\leq \varsigma_+$ together with $\Lambda^{-1}(\sigma)$ and $\Lambda^{-1}(\varsigma_+)$ being symmetric. 
	Combining the preceding estimates finally gives
	\begin{equation*}
	\|D^2F_\tau(\sigma; \eta, \xi)\|_{\mathscr{L}(L^2_\diamond(\partial \Omega))} \leq C(\Omega, \varsigma_-,\varsigma_+) \|\eta\|_{L^\infty(\Omega)} \|\xi\|_{L^\infty(\Omega)},
	\end{equation*}
	and the proof is completed by taking the supremum over $\eta, \xi \in L^\infty(\Omega)$ such that $\| \eta \|_{L^\infty(\Omega)} = \|\xi \|_{L^\infty(\Omega)} = 1$. 
\end{proof}

\begin{remark}
  The continuity of the second derivative $\sigma \mapsto D^2\!\log\!\Lambda_{\tau}(\sigma; \, \cdot \, , \, \cdot \,)$ for $\tau > 0$ could be established,~e.g.,~by repeating the arguments in the proof of Proposition~\ref{prop:logtau_2diff} to show that $\log\!\Lambda_{\tau}$, $\tau > 0$, is actually three times Fr\'echet differentiable. However, we skip such technical calculations because the continuity of the second derivative is not actually needed when proving the main results of this work.
  \end{remark}

\section{Proofs of the main results}
\label{sec:convergence}

We now verify that $D\!\log\!\Lambda:= DF_0$, defined by \eqref{eq:the_derivative}, is in fact the Fr\'echet derivative of $\log\!\Lambda$, and for this reason we call $DF_0$ by its new name in the rest of this section. We start  by considering the convergence of $\log\!\Lambda_\tau(\sigma)$ and $D\!\log\!\Lambda_\tau(\sigma; \, \cdot \,)$ towards $\log\!\Lambda(\sigma)$ and $D\!\log\!\Lambda(\sigma; \, \cdot \,)$, respectively, as $\tau>0$ tends to zero.

\begin{proposition}
	\label{prop:tau_to-zero}
	If $0 < \varsigma_- \leq \sigma \leq \varsigma_+ < \infty$ almost everywhere in $\Omega$, then for any fixed $0 < \epsilon \leq \frac{1}{2}$, 
	\begin{equation}
	\label{eq:tau_to-zero1}
	\| \log\!\Lambda(\sigma) - \log\!\Lambda_\tau(\sigma) \|_{\mathscr{L}(H^{\epsilon}_\diamond(\partial \Omega), H^{-\epsilon}_{\diamond}(\partial\Omega))} \leq C\epsilon^{-1} \tau^{2\epsilon} 
	\end{equation}
	and
	\begin{equation}
	\label{eq:tau_to-zero2}
	\| D\!\log\!\Lambda(\sigma; \, \cdot \, ) - D\!\log\!\Lambda_\tau(\sigma; \, \cdot \, ) \|_{\mathscr{L}(L^\infty(\Omega), \mathscr{L}(H^{\epsilon}_\diamond(\partial \Omega), H^{-\epsilon}_{\diamond}(\partial\Omega)))} \leq C \tau^\epsilon,
	\end{equation}
	where $C = C(\Omega,\varsigma_-,\varsigma_+) >0$ does not depend on $\sigma$, $\tau \geq 0$, or $\epsilon$. 
\end{proposition}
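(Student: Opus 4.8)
The plan is to prove both estimates by expressing the relevant operator differences spectrally in the eigensystem $\{\lambda_k,\phi_k\}$ of $\Lambda(\sigma)$ and then reducing everything to elementary scalar estimates for the functions $s\mapsto\log(s)$ and $s\mapsto 1/s$. The essential ingredient is that, by Theorem~\ref{thm:norm_equi} and the equivalent norms introduced via \eqref{eq:s_sigma_norm}, we may replace $\|\cdot\|_{H^{\pm\epsilon}(\partial\Omega)}$ by the $\sigma$-dependent norms whose weights are powers of $\lambda_k$, with equivalence constants depending only on $\Omega,\varsigma_-,\varsigma_+$ (in particular not on $\sigma$ or $\tau$). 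This is exactly what makes the final constant $C$ uniform in $\sigma$ and $\tau$.

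\textbf{Step 1 (the difference of logarithms).} For $f\in H^\epsilon_\diamond(\partial\Omega)$ write $\log\!\Lambda(\sigma)f-\log\!\Lambda_\tau(\sigma)f=\sum_k\bigl(\log\lambda_k-\log(\lambda_k+\tau)\bigr)\langle f,\phi_k\rangle\phi_k$. Using the $\sigma$-equivalent norms on $H^\epsilon_\diamond$ and $H^{-\epsilon}_\diamond$, the operator norm is controlled by $\sup_k \lambda_k^{-\epsilon}\,\bigl|\log\lambda_k-\log(\lambda_k+\tau)\bigr|\,\lambda_k^{-\epsilon}$, i.e.\ by $\sup_{t>0} t^{-2\epsilon}\log(1+\tau/t)$ (after writing $\lambda_k=t$ and bounding $\log(1+\tau/\lambda_k)=\log(\lambda_k+\tau)-\log\lambda_k\ge0$). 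A one-variable calculus estimate — split at $t=\tau$, use $\log(1+x)\le x$ for $t\ge\tau$ and $\log(1+\tau/t)\le\log(2\tau/t)\le C_\epsilon (\tau/t)^{2\epsilon}\cdot$(something bounded) for $t\le\tau$, or more cleanly substitute $t=\tau r$ and optimise — gives the bound $C\epsilon^{-1}\tau^{2\epsilon}$. The $\epsilon^{-1}$ arises naturally from integrating/optimising $r^{-2\epsilon}\log(1+1/r)$, which behaves like $\epsilon^{-1}$ as the relevant integral is $\int_0^1 r^{2\epsilon-1}\,{\rm d}r$-type. Here one must be slightly careful: $\lambda_1$ is bounded above by a constant depending only on $\Omega,\varsigma_-$ (since $\|\Lambda(\sigma)\|\le C(\Omega)/\varsigma_-$), so the supremum over $t\in(0,\lambda_1]$ only, which keeps the estimate uniform.

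\textbf{Step 2 (the difference of derivatives).} By Proposition~\ref{prop:logtau_diff}, $D\!\log\!\Lambda(\sigma;\eta)-D\!\log\!\Lambda_\tau(\sigma;\eta)=DF_0(\sigma;\eta)-DF_\tau(\sigma;\eta)=\int_0^\tau \Lambda_s^{-1}(\sigma)\,D\Lambda(\sigma;\eta)\,\Lambda_s^{-1}(\sigma)\,{\rm d}s$ (the two integrals over $[\,0,\infty)$ and $[\tau,\infty)$ differ exactly by the integral over $[0,\tau]$ after the substitution $s\mapsto s+\tau$ in the second — I would state this change of variables explicitly). Now I bound the $\mathscr{L}(H^\epsilon_\diamond,H^{-\epsilon}_\diamond)$-norm of this integral. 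Mimicking the estimate \eqref{eq:logdernorm}--\eqref{eq:laminvts} in the proof of Lemma~\ref{lemma:help_diff}, but testing against $f\in H^{\epsilon}_\diamond$ and $g\in H^\epsilon_\diamond$ rather than $f\in L^2_\diamond$, pulling out $\|D\Lambda(\sigma;\eta)\|_{\mathscr{L}(H^{-1/2}_\diamond,H^{1/2}_\diamond)}\le C\|\eta\|_{L^\infty(\Omega)}$ via \eqref{eq:FrDB}, and using the $\sigma$-equivalent norms, the problem again collapses to a scalar supremum: $\sup_{t>0} t^{-2\epsilon}\int_0^\tau \dfrac{\lambda}{(s+\lambda)^2}\Big|_{\lambda=t}\,{\rm d}s$ weighted appropriately — more precisely something of the form $\sup_{t\in(0,\lambda_1]} t^{-2\epsilon}\cdot t\cdot\bigl(\tfrac1t-\tfrac1{t+\tau}\bigr)=\sup_t t^{-2\epsilon}\tfrac{\tau}{t+\tau}$, which is $\le\tau^\epsilon\sup_r r^{-2\epsilon}\tfrac{1}{r+1}$ after $t=\tau r$, and the last supremum is finite (bounded independently of $\epsilon\le\tfrac12$). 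This yields $C\tau^\epsilon\|\eta\|_{L^\infty(\Omega)}$, and taking the supremum over unit $\eta$ finishes \eqref{eq:tau_to-zero2}.

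\textbf{Main obstacle.} The genuinely delicate point is bookkeeping the uniformity of all constants: one must invoke Theorem~\ref{thm:norm_equi} at every place where an $H^{\pm\epsilon}$-norm is replaced by a $\lambda_k$-weighted norm, check that the equivalence constants there do not blow up as $\epsilon\to0$ (the statement claims $C$ is independent of $\epsilon$, so presumably Theorem~\ref{thm:norm_equi} is stated with $\epsilon$-uniform constants on $[0,\tfrac12]$), and keep track that $\lambda_1=\lambda_1(\sigma)$ stays in a fixed bounded interval so that the scalar suprema are really over $t$ in a compact set. The other slightly tricky estimate is the sharp scalar bound in Step~1 producing precisely the factor $\epsilon^{-1}$ (rather than, say, $\epsilon^{-2}$); this comes from recognising $\sup_{r>0} r^{-2\epsilon}\log(1+1/r)$ and bounding it by splitting at $r=1$ and using $\log(1+1/r)\le 1/r\le r^{-2\epsilon}$ for $r\ge1$ while for $r\le1$ using $\log(1+1/r)\le\log(2/r)$ and then $r^{-2\epsilon}\log(2/r)\le \frac{1}{2\epsilon e}\,2^{2\epsilon}\le C\epsilon^{-1}$ by maximising $x\mapsto x^{-2\epsilon}\log(2/x)$ over $x\in(0,1]$.
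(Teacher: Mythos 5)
Your overall architecture is the right one and matches the paper's: expand the differences spectrally, pass to the $\sigma$-dependent equivalent norms \eqref{eq:s_sigma_norm} via Theorem~\ref{thm:norm_equi} to obtain $\sigma$- and $\tau$-uniform constants, and reduce everything to elementary scalar suprema. Your Step~2 is in fact structurally cleaner than the paper's: the change of variables turning $DF_0(\sigma;\eta)-DF_\tau(\sigma;\eta)$ into the single integral $\int_0^\tau\Lambda_s^{-1}(\sigma)\,D\Lambda(\sigma;\eta)\,\Lambda_s^{-1}(\sigma)\,{\rm d}s$ avoids the paper's add-and-subtract decomposition of $\Lambda_s^{-1}(\sigma)-\Lambda_{\tau+s}^{-1}(\sigma)$, and combined with the symmetry/quadratic-form argument of Lemma~\ref{lemma:help_diff} and $\|\Lambda_s^{-1}(\sigma)f\|_{-1/2,\sigma}^2=\sum_k\lambda_k(s+\lambda_k)^{-2}|\langle f,\phi_k\rangle|^2$ it does yield the claim.

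There is, however, a genuine error that recurs in every scalar estimate: you transfer the wrong power of $\lambda_k$ from the Sobolev norms. For a diagonal operator $f\mapsto\sum_k\mu_k\langle f,\phi_k\rangle\phi_k$ viewed from $H^\epsilon_\diamond(\partial\Omega)$ to $H^{-\epsilon}_\diamond(\partial\Omega)$, the equivalent norms give the operator norm $\sup_k\lambda_k^{+2\epsilon}|\mu_k|$ (input weight $\lambda_k^{-2\epsilon}$, output weight $\lambda_k^{+2\epsilon}$), not $\sup_k\lambda_k^{-2\epsilon}|\mu_k|$; the decaying factor $\lambda_k^{2\epsilon}$ is precisely what compensates the blow-up of $\log(1+\tau/\lambda_k)$ as $\lambda_k\to0$ and is the whole reason the proposition lives in $\mathscr{L}(H^\epsilon_\diamond,H^{-\epsilon}_\diamond)$. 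As written, your key quantities $\sup_{t>0}t^{-2\epsilon}\log(1+\tau/t)$ and $\sup_{t>0}t^{-2\epsilon}\tau/(t+\tau)$ are both $+\infty$ (they blow up as $t\to0^+$, and restricting to $t\le\lambda_1$ does not help), the asserted bound $\sup_{x\in(0,1]}x^{-2\epsilon}\log(2/x)\le\tfrac{1}{2\epsilon e}2^{2\epsilon}$ is false (that number is the maximum of $x^{+2\epsilon}\log(2/x)$), and the substitution $t=\tau r$ produces a prefactor $\tau^{-2\epsilon}$ rather than $\tau^{2\epsilon}$. So the central step fails as stated. With the exponent corrected everything goes through: Step~1 becomes $\sup_t t^{2\epsilon}\log(1+\tau/t)\le\tfrac{1}{2\epsilon}\tau^{2\epsilon}$ (the paper uses $\log(1+y)\le\tfrac{1}{2\epsilon}y^{2\epsilon}$ to the same effect), and Step~2 becomes $\sup_t t^{2\epsilon}\tau/(t+\tau)\le\tau^{2\epsilon}$, which even improves on $\tau^\epsilon$ for $\tau\le1$ (for $\tau\ge1$ the claim is trivial since the left-hand side of \eqref{eq:tau_to-zero2} is uniformly bounded). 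Your caveat that the $\epsilon$-independence of $C$ hinges on the equivalence constants of Theorem~\ref{thm:norm_equi} being uniform over $r\in[-\tfrac12,\tfrac12]$ is a fair one.
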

\begin{proof}
	Using the spectral decompositions \eqref{eq:logspectral} and \eqref{eq:logspectral_tau}, we get
	\begin{equation*}
	\big(\log\!\Lambda(\sigma) - \log\!\Lambda_\tau(\sigma)\big) f
	= - \sum_{k=1}^\infty  \log \! \Big(1 + \frac{\tau}{\lambda_k} \Big) \langle f, \phi_k \rangle \phi_k 
	\end{equation*}
	for any $f\in H^{\epsilon}_\diamond(\partial \Omega)$. Hence,
	\begin{align}
	\label{eq:lim1}
	\big\| (\log\!\Lambda(\sigma) - \log\!\Lambda_\tau(\sigma)) f \big\|_{H^{-\epsilon}(\partial\Omega)}^2
	&\leq  C\sum_{k=1}^\infty  \lambda_k^{2\epsilon} \log^2 \!  \Big(1 + \frac{\tau}{\lambda_k} \Big) | \langle f, \phi_k \rangle|^2 \nonumber \\[1mm]
	&\leq C\sup_{t \in \R_+} t^{4\epsilon} \log^2\!\Big(1 + \frac{\tau}{t} \Big)
	\sum_{k=1}^\infty \lambda_k^{-2\epsilon} | \langle f, \phi_k \rangle|^2 \nonumber \\[1mm]
	&\leq C \sup_{t \in \R_+} t^{4\epsilon} \log^2\!\Big(1 + \frac{\tau}{t} \Big) \| f\|_{H^{\epsilon}(\partial \Omega)}^2,
	\end{align}
	where we used Theorem~\ref{thm:norm_equi} that also indicates $C = C(\Omega, \varsigma_-,\varsigma_+)$ is independent of $\sigma$ and $f$. Since $\epsilon\in(0,\tfrac{1}{2}]$, we have 
$$
\log(1 + y) \leq \frac{1}{2 \epsilon} y^{2\epsilon} \qquad  {\rm for} \ {\rm all} \ y\geq 0. 
$$
This estimate can be easily proven,~e.g.,~by first observing that it holds at $y = 0$ and then comparing the derivatives of the two sides.
Hence, taking the square root and supremum over $f \in H^{\epsilon}_\diamond(\partial \Omega)$ with $\| f \|_{H^{\epsilon}(\partial \Omega)}=1$ in \eqref{eq:lim1}, one arrives at
	\begin{equation*}
	\| \log\!\Lambda(\sigma) - \log\!\Lambda_\tau(\sigma)\|_{\mathscr{L}(H^{\epsilon}_\diamond(\partial \Omega), H^{-\epsilon}_{\diamond}(\partial\Omega))} \leq C\epsilon^{-1}\tau^{2\epsilon},
	\end{equation*}
which proves the first part of the claim.
	
In order to prove \eqref{eq:tau_to-zero2}, we first consider the difference of $\Lambda^{-1}_{\tau+s}(\sigma)$ and $\Lambda^{-1}_{s}(\sigma)$ for fixed $\tau, s>0$. To this end, we write
	\begin{equation*}
	(\Lambda^{-1}_{s}(\sigma) - \Lambda^{-1}_{\tau + s}(\sigma))f = \sum_{k=1}^\infty \frac{\tau}{(s+\lambda_k)(\tau+s+\lambda_k)}\langle f,\phi_k\rangle \phi_k
	\end{equation*}
	for $f\in L^2_\diamond(\partial\Omega)$. Hence, due to Theorem~\ref{thm:norm_equi},
	\begin{align}
	\big\|(\Lambda^{-1}_{s}(\sigma) - \Lambda^{-1}_{\tau + s}(\sigma))f \big\|_{H^{-1/2}(\partial\Omega)}^2 &\leq C\sum_{k=1}^\infty \frac{\lambda_k\tau^2}{(s+\lambda_k)^2(\tau+s+\lambda_k)^2}|\langle f,\phi_k\rangle|^2 \nonumber\\ 
	&\leq C\sup_{t\in\mathbb{R}_+}\omega_{\tau,s,\epsilon}^2(t)\sum_{k=1}^\infty \frac{\lambda_k}{(s+\lambda_k)^2}\lambda_k^{-2\epsilon}|\langle f,\phi_k\rangle|^2,  \label{eq:lamdiff}
	\end{align}
	where $C = C(\Omega,\varsigma_-,\varsigma_+)>0$ is independent of $\sigma$ and 
	\begin{equation*}
	\omega_{\tau,s,\epsilon}(t) := \frac{t^\epsilon\tau}{\tau+s+t}.
	\end{equation*}
	As $\omega_{\tau,s,\epsilon}: \R_+ \to \R_+$ vanishes both at the origin and at infinity, its maximum is found at
	\begin{equation*}
	t_* =   \frac{\epsilon}{1-\epsilon} (\tau + s),
	\end{equation*}
	where $\omega'_{\tau,s,\epsilon}$ is zero. Hence,
	\begin{equation}
	\label{eq:rho2}
	\sup_{t \in \R_+} \omega_{\tau,s,\epsilon}(t) =  \epsilon^\epsilon(1-\epsilon)^{1-\epsilon} \frac{\tau}{(\tau + s)^{1-\epsilon}} \leq  \tau^{\epsilon},
	\end{equation}
	for all $\tau, s>0$ and $\epsilon\in(0,\tfrac{1}{2}]$.
	
	Note that $\sup_{k}\lambda_k = \|\Lambda(\sigma)\|_{\mathscr{L}(L^2_\diamond(\partial\Omega))} \leq \|\Lambda(\varsigma_-)\|_{\mathscr{L}(L^2_\diamond(\partial\Omega))}$ where the inequality is a consequence of monotonicity, cf.\ Appendix~\ref{app:norms}. In particular, 
	\begin{equation*}
	\lambda_k^{2\epsilon} \leq \max \big\{1,\|\Lambda(\varsigma_-)\|_{\mathscr{L}(L^2_\diamond(\partial\Omega))} \big\}, \qquad k \in \N.
	\end{equation*}
	Hence, we have
	\begin{equation}
	\big\|\Lambda_{s}^{-1}(\sigma) f \big\|_{H^{-1/2}(\partial\Omega)}^2 \leq C \sum_{k=1}^\infty \frac{\lambda_k}{(s+\lambda_k)^2}\lambda_k^{-2\epsilon} |\langle f , \phi_k \rangle|^2, \label{eq:laminvts2}
	\end{equation}
where $C = C(\Omega, \varsigma_-, \varsigma_+)$ is independent of $\sigma$ by Theorem~\ref{thm:norm_equi}.
	Mimicking the proof of Lemma~\ref{lemma:help_diff}, one may write
	\begin{align*}
	\big\langle f, \, (D\!\log\!\Lambda(\sigma; \eta)- D\!\log\!\Lambda_\tau(\sigma; \eta)) f \big \rangle \hspace{-4cm}& \\
	&=
	\int_0^\infty \Big( \big\langle \Lambda_{s}^{-1}(\sigma)  f,   \, D\Lambda(\sigma; \eta)  \Lambda_{s}^{-1}(\sigma) f \big \rangle  -  \big\langle \Lambda_{\tau + s}^{-1}(\sigma)  f,   \, D\Lambda(\sigma; \eta)  \Lambda_{\tau + s}^{-1}(\sigma) f \big \rangle \Big) \, {\rm d} s \\
	&=  \int_0^\infty  \big\langle \Lambda_{s}^{-1}(\sigma)  f,   \, D\Lambda(\sigma; \eta)(\Lambda_{s}^{-1}(\sigma) - \Lambda_{\tau + s}^{-1}(\sigma) ) f \big \rangle \, {\rm d} s \\
	&\phantom{{}={}}  + \int_0^\infty \big\langle (\Lambda_{s}^{-1}(\sigma) - \Lambda_{\tau + s}^{-1}(\sigma)) f,   \, D\Lambda(\sigma; \eta)  \Lambda_{\tau + s}^{-1}(\sigma) f \big \rangle  \, {\rm d} s.
	\end{align*}
	Because $D\!\log\!\Lambda(\sigma; \eta)- D\!\log\!\Lambda_\tau(\sigma; \eta): H^{\epsilon}_\diamond(\partial \Omega) \to H^{-\epsilon}_\diamond(\partial \Omega)$ is symmetric and by denoting $\|\cdot\| = \|\cdot\|_{H^{-1/2}(\partial\Omega)}$, we obtain
	\begin{align*}
	\| D\!\log\!\Lambda(\sigma; \eta) - D\!\log\!\Lambda_\tau(\sigma; \eta) \|_{\mathscr{L}(H^{\epsilon}_\diamond(\partial\Omega), H^{-\epsilon}_\diamond(\partial\Omega))}  \hspace{-4cm}&\\[2mm]
	&= \sup_{\|f\|_{H^{\epsilon}(\partial\Omega)}=1}  \big| \big\langle f, \, (D\!\log\!\Lambda(\sigma; \eta)- D\!\log\!\Lambda_\tau(\sigma; \eta)) f \big \rangle \big| \nonumber \\
	&\leq C \| \eta \|_{L^\infty(\Omega)} \sup_{\|f\|_{H^{\epsilon}(\partial\Omega)}=1} \Big( \int_0^\infty \| \Lambda_{s}^{-1}(\sigma)f \| \| \Lambda_{s}^{-1}(\sigma) - \Lambda_{\tau + s}^{-1}(\sigma) f\| \,  {\rm d} s \nonumber \\
	& \phantom{\leq C \| \eta \|_{L^\infty(\Omega)} \sup_{\|f\|_{H^{\epsilon}(\partial\Omega)}=1}} \  +  \int_0^\infty  \| \Lambda_{s}^{-1}(\sigma) - \Lambda_{\tau + s}^{-1}(\sigma) f\| \| \Lambda_{\tau + s}^{-1}(\sigma)f \| \,  {\rm d} s \Big) \nonumber \\
	&\leq C \| \eta \|_{L^\infty(\Omega)}  \sup_{\|f\|_{H^{\epsilon}(\partial\Omega)}=1} \int_0^\infty \| \Lambda_{s}^{-1}(\sigma) - \Lambda_{\tau + s}^{-1}(\sigma) f\| \| \Lambda_{s}^{-1}(\sigma)f \| \,  {\rm d} s, \nonumber
	\end{align*}
	where we used the triangle inequality, the boundedness of the dual bracket over $H^{-1/2}_\diamond(\partial \Omega) \times H^{1/2}_\diamond(\partial \Omega)$, and the estimates \eqref{eq:FrDB}, \eqref{eq:laminvts} and \eqref{eq:norm_equi}. Note that the generic constant $C = C(\Omega, \varsigma_-, \varsigma_+)$ remains independent of $\sigma$. 
	
	Substituting \eqref{eq:lamdiff}--\eqref{eq:laminvts2}, we finally get
	\begin{align*}
	\| D\!\log\!\Lambda(\sigma; \eta) - D\!\log\!\Lambda_\tau(\sigma; \eta) \|_{\mathscr{L}(H^{\epsilon}_\diamond(\partial\Omega), H^{-\epsilon}_\diamond(\partial\Omega))} \hspace{-6cm}&\\
	&\leq  C(\Omega, \varsigma_-,\varsigma_+)\tau^\epsilon  \|\eta \|_{L^\infty(\Omega)} \sup_{\|f\|_{H^{\epsilon}(\partial\Omega)}=1}  \sum_{k=1}^\infty  \lambda_k^{-2\epsilon} |\langle f, \phi_k \rangle |^2 \int_0^\infty \frac{\lambda_k}{(s + \lambda_k)^2} \, {\rm d} s \\
	& \leq C(\Omega, \varsigma_-,\varsigma_+)\tau^\epsilon  \|\eta \|_{L^\infty(\Omega)},
	\end{align*}
	where the last step is a simple consequence of Theorem~\ref{thm:norm_equi}. Taking the supremum over $\eta \in L^\infty(\Omega)$ with $\| \eta \|_{L^\infty(\Omega)} = 1$ proves \eqref{eq:tau_to-zero2}.
\end{proof}

Now we have developed all the necessary weaponry to prove our main results.

{\em Proof of Theorem~\ref{thm:main}}.
We prove the first part of Theorem~\ref{thm:main} for $\log\!\Lambda: \sigma \mapsto \log\!\Lambda(\sigma)$ in place of $L: \kappa \mapsto \log\!\Lambda({\rm e}^\kappa)$. Because $L = \log\!\Lambda \circ \exp$, the actual differentiability result for $L$ then immediately follows from the chain rule for Banach spaces. In the rest of this proof, $\|  \cdot  \| = \| \cdot  \|_{\mathscr{L}(H^{\epsilon}_\diamond(\partial \Omega), H^{-\epsilon}_{\diamond}(\partial\Omega))}$ for some fixed $\epsilon\in(0,\frac{1}{2}]$, if not explicitly stated otherwise.

To begin with, the triangle inequality yields
\begin{align}
\label{eq:three_eps}
\| \log\!\Lambda(\sigma + \eta) - \log\!\Lambda(\sigma) -  D\!\log\!\Lambda(\sigma; \eta) \| \hspace{-5cm}&  \nonumber \\[1mm]
& \leq \| \log\!\Lambda_\tau(\sigma + \eta) - \log\!\Lambda_\tau(\sigma) -  D\!\log\!\Lambda_\tau(\sigma; \eta) \| + \| D\!\log\!\Lambda_\tau(\sigma; \eta) - D\!\log\!\Lambda(\sigma; \eta) \| \nonumber \\[1mm]
& \phantom{{}\leq{}} + \| \log\!\Lambda(\sigma + \eta) - \log\!\Lambda_\tau(\sigma + \eta) \| + \| \log\!\Lambda_\tau(\sigma) - \log\!\Lambda(\sigma) \| \nonumber \\[1mm]
& \leq \| \log\!\Lambda_\tau(\sigma + \eta) - \log\!\Lambda_\tau(\sigma) -  D\log\!\Lambda_\tau(\sigma; \eta) \| +  C (\epsilon^{-1}\tau^{2\epsilon} + \tau^\epsilon\| \eta \|_{L^\infty(\Omega)} ).
\end{align}
The latter step is a consequence of Proposition \ref{prop:tau_to-zero} assuming that, say, $\| \eta \|_{L^\infty(\Omega)} \leq \essinf( \sigma/2)$ so that all conductivities considered during the limit process are uniformly bounded away from zero and infinity. In particular, the constant $C = C(\Omega,\sigma)>0$ in \eqref{eq:three_eps} is independent of (small enough) $\eta$ and $\tau>0$. 

Let $[\sigma, \sigma+\eta] := \{ \sigma +t\eta \mid t\in [0,1] \}$ denote the line segment in $L_+^\infty(\Omega)$ connecting $\sigma$ and $\sigma+\eta$. By virtue of Taylor's theorem for Banach spaces, Proposition~\ref{prop:logtau_2diff}, and the topology of $\mathscr{L}(L^2_\diamond(\partial\Omega))$ being finer than that of $\mathscr{L}(H_\diamond^{\epsilon}(\partial\Omega), H_\diamond^{-\epsilon}(\partial\Omega))$,
\begin{align*}
\| \log\!\Lambda_\tau(\sigma + \eta) - \log\!\Lambda_\tau(\sigma)  -  D\!\log\!\Lambda_\tau(\sigma; \eta) \| \hspace{-5.5cm}& \\[1mm] & \leq \frac{1}{2}  \sup_{\varsigma \in [\sigma, \sigma+\eta]}
\big\| D^2\!\log\!\Lambda_\tau (\varsigma; \, \cdot \, , \, \cdot \,) \big\|_{\mathscr{L}(L^\infty(\Omega)^2, \mathscr{L}(L^2_\diamond(\partial\Omega)))} \|\eta\|_{L^\infty(\Omega)}^2 
\leq C  \|\eta\|_{L^\infty(\Omega)}^2,
\end{align*}
where the constant $C = C(\Omega,\sigma)$ can be chosen independent of (small enough) $\eta$ and $\tau>0$. Combining this with \eqref{eq:three_eps}, we have altogether deduced that for any $\tau > 0$,
\begin{equation}
\| \log\!\Lambda(\sigma + \eta) - \log\!\Lambda(\sigma) -  D\!\log\!\Lambda(\sigma; \eta) \| \leq C  \big( \epsilon^{-1}\tau^{2\epsilon} + \tau^{\epsilon}\| \eta \|_{L^\infty(\Omega)} + \|\eta\|_{L^\infty(\Omega)}^2 \big). \label{eq:finaltauest}
\end{equation}
If one chooses $\tau = \tau(\|\eta\|_{L^\infty(\Omega)}) = \|\eta\|_{L^\infty(\Omega)}^{1/\epsilon}$, then the right-hand side of \eqref{eq:finaltauest} becomes $o(\|\eta\|_{L^\infty(\Omega)})$, and thereby the first part of the proof is complete.  

The next goal is to prove the continuity of $\sigma\mapsto D\!\log\!\Lambda(\sigma; \, \cdot \,)$ as a map from $L^\infty_+(\Omega)$ to $\mathscr{L}(L^\infty(\Omega), \mathscr{L}(L^2_\diamond(\partial\Omega)))$. According to the first part of the proof, $D\!\log\!\Lambda$ is the Fr\'echet derivative of $\log\!\Lambda$ in the topology of $\mathscr{L}(H^\epsilon_\diamond(\partial\Omega),H^{-\epsilon}_\diamond(\partial\Omega))$, and by repeating the first part in the proof of Proposition~\ref{prop:logtau_2diff} with $\tau =0$, its second Fr\'echet derivative is $D^2\!\log\!\Lambda := D^2F_0$.  The continuity of $\sigma\mapsto D\!\log\!\Lambda(\sigma; \, \cdot \,)$ now follows from any differentiable map being continuous, but let us anyway write a brief proof that provides an explicit Lipschitz-type estimate:

Obviously, $D^2\!\log\!\Lambda(\sigma;\eta,\, \cdot \,)$ is the Fr\'echet derivative of $\sigma\mapsto D\!\log\!\Lambda(\sigma;\eta)$ in the topology of $\mathscr{L}(L^2_\diamond(\partial\Omega))$  for any fixed $\eta\in L^\infty(\Omega)$.
Let us fix $\sigma_1,\sigma_2\in L_+^\infty(\Omega)$ and note that there exist scalars $\varsigma_+$ and $\varsigma_-$ such that $0 < \varsigma_- \leq \varsigma \leq \varsigma_+ < \infty$  for any $\varsigma \in [\sigma_1,\sigma_2] := \{ \sigma_1 + t(\sigma_2-\sigma_1) \mid t\in [0,1] \}$ almost everywhere in $\Omega$. Due to the mean-value theorem and Proposition~\ref{prop:logtau_2diff}, we have
\begin{align}
\| D\!\log\!\Lambda(\sigma_2;\eta) - D\!\log\!\Lambda(\sigma_1;\eta) \|_{\mathscr{L}(L^2_\diamond(\partial\Omega))} 
&\leq \sup_{\varsigma\in[\sigma_1,\sigma_2]} \big\| D^2\!\log\!\Lambda(\varsigma;\eta,\sigma_2-\sigma_1) \big\|_{\mathscr{L}(L^2_\diamond(\partial\Omega))} \nonumber \\
&\leq C(\Omega,\varsigma_-,\varsigma_+)\| \eta\|_{L^\infty(\Omega)} \|\sigma_2-\sigma_1\|_{L^\infty(\Omega)}. \label{eq:contDlam}
\end{align}
Taking the supremum over $\eta\in L^\infty(\Omega)$ with $\|\eta\|_{L^\infty(\Omega)} = 1$ implies $\sigma\mapsto D\!\log\!\Lambda(\sigma,\, \cdot \,)$ is locally Lipschitz continuous as a map $L^\infty_+(\Omega)\to\mathscr{L}(L^\infty(\Omega), \mathscr{L}(L^2_\diamond(\partial\Omega)))$.

Let us then consider \eqref{eq:maincont}. Due to the chain rule for Banach spaces, the Fr\'echet derivative of the completely logarithmic forward map considered in Theorem~\ref{thm:main} is given by
\begin{equation}
DL(\kappa; \eta) = D\!\log\!\Lambda({\rm e}^\kappa; \eta {\rm e}^\kappa), \qquad \kappa, \eta \in L^\infty(\Omega). \label{eq:DLrelation}
\end{equation}
In particular, it follows from Proposition~\ref{prop:logtau_diff} and \eqref{eq:contDlam} that for any $\kappa_1,\kappa_2 \in L^\infty(\Omega)$,
\begin{align*}
\| DL(\kappa_2;\eta) - DL(\kappa_1;\eta) \|_{\mathscr{L}(L^2_\diamond(\partial\Omega))} 
&= \big\| D\!\log\!\Lambda({\rm e}^{\kappa_2}; \eta {\rm e}^{\kappa_2}) - D\!\log\!\Lambda({\rm e}^{\kappa_1}; \eta {\rm e}^{\kappa_1}) \big\|_{\mathscr{L}(L^2_\diamond(\partial\Omega))} \\[1mm]
&\leq \big\| D\!\log\!\Lambda({\rm e}^{\kappa_2}; \eta {\rm e}^{\kappa_2}) - D\!\log\!\Lambda({\rm e}^{\kappa_1}; \eta {\rm e}^{\kappa_2}) \big\|_{\mathscr{L}(L^2_\diamond(\partial\Omega))} \\
&\phantom{{}\leq{}}+ \big\| D\!\log\!\Lambda({\rm e}^{\kappa_1}; \eta ({\rm e}^{\kappa_2}-{\rm e}^{\kappa_1})) \big\|_{\mathscr{L}(L^2_\diamond(\partial\Omega))} \\[1mm]
&\leq C \|\eta\|_{L^\infty(\Omega)} \big( \| {\rm e}^{\kappa_2}\|_{L^\infty(\Omega)}+1 \big) \| {\rm e}^{\kappa_2} - {\rm e}^{\kappa_1}\|_{L^\infty(\Omega)} \\[1mm]
&\leq C \|\eta\|_{L^\infty(\Omega)} \| {\rm e}^{\kappa_2} - {\rm e}^{\kappa_1}\|_{L^\infty(\Omega)},
\end{align*}
where $C>0$ only depends on $\Omega$ and $\max_{j=1,2} \| \kappa_j \|_{L^\infty(\Omega)}$. Taking the supremum over $\eta\in L^\infty(\Omega)$ with $\|\eta\|_{L^\infty(\Omega)}=1$ concludes the proof. \hfill $\square$

Finally, we consider the proofs of Corollary~\ref{cor:main} and Corollary~\ref{cor:DLformula}.

{\em Proof of Corollary~\ref{cor:main}}. Postponing the proof of \eqref{eq:Llocaluniformcont} till the end, clearly the rest of Corollary~\ref{cor:main} is equivalent to the difference $\log\!\Lambda(\sigma_2) - \log\!\Lambda(\sigma_1)$ continuously extending to a self-adjoint operator in $\mathscr{L}(L^2_\diamond(\partial\Omega))$ for any $\sigma_1,\sigma_2\in L^\infty_+(\Omega)$. For a fixed $\epsilon\in (0,\tfrac{1}{2}]$ and $\sigma_1,\sigma_2\in L^\infty_+(\Omega)$, we define a function $G$, with the derivative $G'(t) := DG(t;1)$, via
\begin{alignat*}{2}
G &: \ t\mapsto \log\!\Lambda(\sigma_1 + t(\sigma_2-\sigma_1)),  && \qquad [0,1] \to \mathscr{L}(H^\epsilon_\diamond(\partial\Omega), H^{-\epsilon}_\diamond(\partial\Omega)), \\
G' \! &: \ t\mapsto D\!\log\!\Lambda\big(\sigma_1+t(\sigma_2-\sigma_1); \sigma_2-\sigma_1\big),  && \qquad [0,1] \to \mathscr{L}(H^\epsilon_\diamond(\partial\Omega), H^{-\epsilon}_\diamond(\partial\Omega)).
\end{alignat*}
In particular, $G'$ is continuous as a mapping to $\mathscr{L}(L^2_\diamond(\partial\Omega))$ due to \eqref{eq:contDlam}, and thus it is also continuous with respect to the coarser topology of $\mathscr{L}(H^\epsilon_\diamond(\partial\Omega), H^{-\epsilon}_\diamond(\partial\Omega))$. By virtue of the fundamental theorem of calculus for Bochner integrals on the real line, we thus have
\begin{equation}
\log\!\Lambda(\sigma_2) - \log\!\Lambda(\sigma_1) = G(1)-G(0) = \int_0^1 G'(t) \, {\rm d} t \label{eq:fundamentaltheoremcalculus}
\end{equation}
as an operator in $\mathscr{L}(H^\epsilon_\diamond(\partial\Omega), H^{-\epsilon}_\diamond(\partial\Omega))$. Since $t\mapsto \|G'(t)\|_{\mathscr{L}(L^2_\diamond(\partial\Omega))}$ is continuous by \eqref{eq:contDlam} and $G'(t)$ defines a  self-adjoint operator in $\mathscr{L}(L^2_\diamond(\partial\Omega))$ for all $t\in[0,1]$, the right-hand side of \eqref{eq:fundamentaltheoremcalculus} extends continuously to a self-adjoint operator in $\mathscr{L}(L^2_\diamond(\partial\Omega))$, providing the sought for extension.

Finally we prove \eqref{eq:Llocaluniformcont}, which is essentially equivalent to the difference $H(\sigma) := \log\!\Lambda(\sigma) - \log\!\Lambda(\sigma_0)$ being locally Lipschitz continuous as a map $L^\infty_+(\Omega) \ni \sigma \mapsto H(\sigma) \in \mathscr{L}(L^2_\diamond(\partial\Omega))$ for any fixed $\sigma_0\in L^\infty_+(\Omega)$. The proof of Theorem~\ref{thm:main} indicates that $D\!\log\!\Lambda = DF_0$ is the Fr\'echet derivative of $H$ in the topology of $\mathscr{L}(L^2_\diamond(\partial\Omega))$.
As in \eqref{eq:contDlam}, it thus follows by the mean-value theorem and Proposition~\ref{prop:logtau_diff} that
\begin{equation*}
	\| H(\sigma_2) - H(\sigma_1) \|_{\mathscr{L}(L^2_\diamond(\partial\Omega))} \leq C \|\sigma_2-\sigma_1\|_{L^\infty(\Omega)},
\end{equation*}
where $C>0$ only depends on $\Omega$ and scalars $\varsigma_+$ and $\varsigma_-$ satisfying $0 < \varsigma_- \leq \sigma_j\leq \varsigma_+ < \infty$ almost everywhere in $\Omega$ for $j \in \{1,2\}$. Choosing $\sigma_j = {\rm e}^{\kappa_j}$, $j=1,2$, for arbitrary $\kappa_1, \kappa_2 \in L^\infty(\Omega)$ leads to \eqref{eq:Llocaluniformcont} and completes the proof.~\hfill $\square$

{\em Proof of Corollary~\ref{cor:DLformula}}. This result is a direct consequence of \eqref{eq:DLrelation} and Proposition~\ref{prop:DLambdaformula} with $\tau=0$. \hfill $\square$

\section{Concluding remarks}
In order to prove the completely logarithmic forward map of EIT really exhibits a low degree of nonlinearity, one should establish (favorable) norm bounds for its second derivative; consider~\eqref{eq:second_derivative} with $\tau=0$, \eqref{eq:Lderiva} and the chain rule for Banach spaces. Such considerations are left for future studies.

Although this work only considered the conductivity equation motivated by the observations in~\cite{Hyvonen18}, we expect that similar differentiability results also hold for the logarithms of ND maps defined by other linear elliptic equations over bounded Lipschitz domains.

\subsection*{Acknowledgments}

This work was supported by the Academy of Finland (decision 312124) and the Aalto Science Institute (AScI).

\appendix

\section{Derivatives of the Neumann-to-Dirichlet map}
\label{app:derivatives}

The material presented in this appendix has intimate connections to \cite{Calderon80} and \cite[Appendix~B]{Garde17}, where the analytic dependence of the DN and ND maps on the conductivity is considered. 

As above, let $\Omega \subset \R^d$, $d \geq 2$, be a bounded Lipschitz domain. Throughout this section we utilize the following norm equivalence that is a straightforward consequence of the Poincar\'e inequality:
\begin{equation}
  \label{eq:normequi}
	\|\nabla u\|_{L^2(\Omega)} \leq \|u\|_{H^1(\Omega)/\mathbb{C}}:= \inf_{c \in \C} \| u - c \|_{H^1(\Omega)} \leq C(\Omega) \|\nabla u\|_{L^2(\Omega)}.
\end{equation}
This also demonstrates that $H^1(\Omega)/\mathbb{C}$ is a Hilbert space when equipped with the inner product
\begin{equation}
  \label{eq:innerp}
(u,v)_\sigma :=  \int_{\Omega} \sigma \nabla u \cdot \nabla \overline{v} \, {\rm d} x , \qquad u,v \in H^1(\Omega)/\C,
\end{equation}
for any fixed $\sigma\in L^\infty_+(\Omega)$.

Let us introduce two auxiliary operators, namely
\begin{align*}
  N(\sigma) \colon \left\{ \begin{array}{l} f \mapsto u, \\[1mm]
    H^{-1/2}_\diamond(\partial\Omega) \to  H^1(\Omega)/\mathbb{C},
  \end{array} \right. 
  \qquad
  P(\sigma, \eta) \colon \left\{ \begin{array}{l} \tilde{u} \mapsto w, \\[1mm]
    H^1(\Omega)/\mathbb{C} \to  H^1(\Omega)/\mathbb{C},
    \end{array} \right.
\end{align*}
where $u \in H^1(\Omega)/\mathbb{C}$ is the unique solution of \eqref{eq:varcont} and $w \in H^1(\Omega)/\mathbb{C}$ is the unique solution of
\begin{equation}
\label{eq:perturbop}
\int_\Omega \sigma \nabla w \cdot \nabla \overline{v} \, {\rm d} x = - \int_\Omega \eta \nabla \tilde{u} \cdot \nabla \overline{v} \, {\rm d} x \qquad \text {for all } v \in H^1(\Omega)/\C
\end{equation}
for given $\sigma \in L^\infty_+(\Omega)$, $\eta \in L^\infty(\Omega)$, and $\tilde{u} \in H^1(\Omega)/\C$. The unique solvability of \eqref{eq:perturbop} as well as the bound
\begin{equation}
\label{eq:Pbound}  
\|P(\sigma, \eta)\|_{\mathscr{L}(H^1(\Omega)/\mathbb{C})} \leq  \frac{C(\Omega)}{\essinf \sigma} \, \|\eta\|_{L^\infty(\Omega)} 
\end{equation}
follow by combining \eqref{eq:normequi} and \eqref{eq:innerp} with the Lax--Milgram lemma. The boundedness of $N(\sigma)$ is guaranteed by \eqref{eq:Ubnd}.

It turns out that all derivatives for the standard forward map $\sigma \mapsto \Lambda(\sigma)$ of EIT can be explicitly represented with the help of $N(\sigma)$, $P(\sigma, \eta)$, and the `nonstandard' trace map
\begin{equation*}
{\rm tr} \colon \left\{ \begin{array}{l} v \mapsto V, \\[1mm]
    H^1(\Omega)/\mathbb{C} \to H^{1/2}_\diamond(\partial \Omega),
  \end{array} \right. 
\end{equation*}
where, in the spirit of \eqref{eq:Udef}, $V \in  H^{1/2}_\diamond(\partial \Omega)$ is the unique zero-mean representative of the quotient equivalence class $v|_{\partial \Omega} \in  H^{1/2}(\partial \Omega) /\C$. It is straightforward to confirm that ${\rm tr}: H^1(\Omega)/\mathbb{C} \to H^{1/2}_\diamond(\partial \Omega)$ inherits boundedness from the standard trace map. Indeed, by using the definition of quotient norms, it follows that
\begin{equation*}
	\| v|_{\partial \Omega} \|_{H^{1/2}(\partial \Omega) /\C} \leq C(\Omega)
	\| v \|_{H^{1}(\Omega) /\C} \qquad \text{for all } v \in H^{1}(\Omega) /\C,
\end{equation*}
and $V \in H^{1/2}_\diamond(\partial \Omega)$ is precisely the member of the equivalence class $v|_{\partial \Omega} \in H^{1/2}(\partial \Omega) /\C$ that realizes the quotient norm on the left-hand side.

We start with a simple lemma on the differentiability of $P$.

\begin{lemma}
\label{lemma:perturbop}
The map $L^\infty_+(\Omega) \times L^\infty(\Omega) \ni (\sigma, \eta) \mapsto P(\sigma, \eta) \in \mathscr{L}(H^1(\Omega)/\mathbb{C})$ is continuous, and it is linear in the second variable. Furthermore, $P$ is Fr\'echet differentiable and its partial derivative with respect to the first variable, $D_\sigma P(\sigma, \eta; \, \cdot \, ) \in \mathscr{L}(L^\infty(\Omega), \mathscr{L}(H^1(\Omega)/\mathbb{C}))$, admits the representation
\begin{equation*}
	D_\sigma P(\sigma, \eta; \xi) = P(\sigma, \xi)P(\sigma, \eta)
\end{equation*}
for all $\sigma \in L^\infty_+(\Omega)$ and $\eta, \xi \in L^\infty(\Omega)$.
\end{lemma}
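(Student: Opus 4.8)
The plan is to establish the three claimed properties of $P$ in sequence, extracting everything from the defining variational equation~\eqref{eq:perturbop} and the uniform bound~\eqref{eq:Pbound}. Linearity of $\eta \mapsto P(\sigma,\eta)$ is immediate: the right-hand side of~\eqref{eq:perturbop} depends linearly on $\eta$, and the left-hand side is fixed, so by uniqueness of solutions the solution map inherits the linearity. Continuity of $(\sigma,\eta)\mapsto P(\sigma,\eta)$ will be proved by a standard perturbation argument: given $(\sigma_1,\eta_1)$ and $(\sigma_2,\eta_2)$, I would subtract the two variational identities defining $w_1 = P(\sigma_1,\eta_1)\tilde u$ and $w_2 = P(\sigma_2,\eta_2)\tilde u$, test with $v = w_1 - w_2$, and move the $(\sigma_1-\sigma_2)\nabla w_2$ term to the right-hand side. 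Using~\eqref{eq:normequi}, the Cauchy--Schwarz inequality, the ellipticity bound $\essinf\sigma_1 > 0$, and the already-known bound~\eqref{eq:Pbound} for $\|w_2\|_{H^1(\Omega)/\C}$, one gets $\|w_1 - w_2\|_{H^1(\Omega)/\C} \le C(\|\sigma_1-\sigma_2\|_{L^\infty(\Omega)} + \|\eta_1 - \eta_2\|_{L^\infty(\Omega)})\,\|\tilde u\|_{H^1(\Omega)/\C}$ with $C$ locally uniform, which is the desired operator-norm continuity.

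For the Fr\'echet differentiability, the candidate derivative is dictated by formally differentiating~\eqref{eq:perturbop} in $\sigma$: if $w = P(\sigma,\eta)\tilde u$, then a perturbation $\xi$ of $\sigma$ should contribute $-\int_\Omega \xi \nabla w \cdot \nabla\bar v\,{\rm d}x$ to the right-hand side, i.e.\ the derivative applied to $\tilde u$ solves the same equation as $P(\sigma,\xi)$ does but with datum $w = P(\sigma,\eta)\tilde u$ rather than $\tilde u$; hence $D_\sigma P(\sigma,\eta;\xi) = P(\sigma,\xi)P(\sigma,\eta)$. To verify this rigorously, I would write the remainder $P(\sigma+\xi,\eta)\tilde u - P(\sigma,\eta)\tilde u - P(\sigma,\xi)P(\sigma,\eta)\tilde u$, identify the variational problem it solves by subtracting the relevant identities, and show that its right-hand side is bounded by (a constant times) $\|\xi\|_{L^\infty(\Omega)}$ times the difference $P(\sigma+\xi,\eta)\tilde u - P(\sigma,\eta)\tilde u$, which by the continuity estimate above is itself $O(\|\xi\|_{L^\infty(\Omega)})\|\tilde u\|_{H^1(\Omega)/\C}$. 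This yields a remainder of order $\|\xi\|_{L^\infty(\Omega)}^2$, establishing Fr\'echet differentiability with the claimed derivative; linearity and boundedness of $\xi \mapsto P(\sigma,\xi)P(\sigma,\eta)$ follow from the linearity in the second slot together with~\eqref{eq:Pbound}. Full Fr\'echet differentiability of $P$ as a function of both variables then follows since it is affine (indeed linear) in $\eta$ and $C^1$ in $\sigma$, and the partial derivatives are jointly continuous by the same type of estimates.

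I do not expect a genuine obstacle here; the proof is a routine energy-estimate argument. The only mildly delicate point is bookkeeping the quotient space $H^1(\Omega)/\C$ correctly — making sure all test functions and solutions are understood modulo constants and that the norm~\eqref{eq:normequi} is used consistently — but since~\eqref{eq:perturbop} only involves gradients, constants drop out automatically and this causes no real trouble. The constants in all estimates depend on $\Omega$ and on a lower bound for $\essinf\sigma$, hence are locally uniform in $\sigma \in L^\infty_+(\Omega)$, which is all that is needed for continuity and differentiability.
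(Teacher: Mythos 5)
Your proposal is correct and follows essentially the same route as the paper: the paper packages your ``subtract the variational identities'' step into the single operator identity $P(\sigma+\xi,\eta)-P(\sigma,\eta)=P(\sigma,\xi)P(\sigma+\xi,\eta)$ and reads off both the continuity estimate and the $O(\|\xi\|_{L^\infty(\Omega)}^2)$ remainder bound from it together with \eqref{eq:Pbound}, which is exactly what your energy-estimate and remainder arguments amount to. The only cosmetic difference is that you prove joint continuity directly, whereas the paper first reduces to continuity in $\sigma$ for fixed $\eta$ via linearity and uniform boundedness in the second slot.
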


\begin{proof}
  It is obvious from \eqref{eq:perturbop} and \eqref{eq:Pbound} that the map $L^\infty(\Omega) \ni \eta \mapsto P(\sigma, \eta) \in \mathscr{L}(H^1(\Omega)/\mathbb{C})$ is linear and uniformly bounded over all $\sigma$ in any subset of $L^\infty_+(\Omega)$ that is bounded uniformly away from zero. As a consequence, the continuity and Fr\'echet differentiability of $L^\infty_+(\Omega) \times L^\infty(\Omega) \ni (\sigma, \eta) \mapsto P(\sigma, \eta) \in \mathscr{L}(H^1(\Omega)/\mathbb{C})$ follows from that of $L^\infty_+(\Omega) \ni \sigma \mapsto P(\sigma, \eta) \in \mathscr{L}(H^1(\Omega)/\mathbb{C})$ for an arbitrary but fixed $\eta \in L^\infty(\Omega)$.

 Due to the definition of $P$ based on \eqref{eq:perturbop}, the following identities hold for all $\tilde{u}, v \in H^1(\Omega)/\mathbb{C}$, $\sigma \in L^\infty_+(\Omega)$, and $\xi \in L^\infty(\Omega)$ satisfying $\sigma + \xi \in L^\infty_+(\Omega)$: 
	\begin{equation*}
	\int_\Omega (\sigma + \xi) \nabla P(\sigma + \xi, \eta) \tilde{u} \cdot \nabla \overline{v} \, {\rm d} x = - \int_\Omega \eta \nabla \tilde{u} \cdot \nabla \overline{v} \, {\rm d}x  =
	\int_\Omega \sigma \nabla P(\sigma, \eta) \tilde{u} \cdot \nabla \overline{v} \, {\rm d} x. 
	\end{equation*}
Hence,
	\begin{equation*}
		\int_\Omega \sigma  \nabla \big(P(\sigma + \xi, \eta) - P(\sigma, \eta)\big)\tilde{u} \cdot \nabla \overline{v} \, {\rm d} x 
		= - \int_\Omega \xi \nabla P(\sigma+\xi, \eta) \tilde{u} \cdot \nabla \overline{v} \, {\rm d} x,
	\end{equation*}
which by \eqref{eq:perturbop} means that 
\begin{equation}
	P(\sigma + \xi, \eta) - P(\sigma, \eta) = P(\sigma,\xi)P(\sigma+\xi, \eta). \label{eq:Pdifference}
\end{equation}
In particular, by virtue of \eqref{eq:Pdifference} and \eqref{eq:Pbound},
\begin{equation}
  \label{eq:Pcont}
  \| P(\sigma+\xi, \eta) - P(\sigma, \eta) \|_{\mathscr{L}(H^1(\Omega)/\mathbb{C})}
  \leq \frac{C(\Omega)}{\essinf (\sigma) \essinf (\sigma + \xi)} \|\xi\|_{L^\infty(\Omega)} \|\eta\|_{L^\infty(\Omega)} ,
\end{equation}
which proves the claim about continuity. Resorting to \eqref{eq:Pdifference} for a second time yields
\begin{align}
	  \|P(\sigma + \xi, \eta) - P(\sigma, \eta) - P(\sigma, \xi) P(\sigma, \eta) \|_{\mathscr{L}(H^1(\Omega)/\mathbb{C})} \hspace{-3cm}& \notag \\[1mm]
        &= \big\| P(\sigma, \xi) \big(P(\sigma+\xi, \eta) -  P(\sigma, \eta)\big)\big\|_{\mathscr{L}(H^1(\Omega)/\mathbb{C})} \notag \\
		&\leq \frac{C(\Omega)}{\essinf (\sigma^2) \essinf (\sigma + \xi)} \|\xi\|^2_{L^\infty(\Omega)} \|\eta\|_{L^\infty(\Omega)}, \label{eq:Pderivbnd}
\end{align}
where the last step follows by combining \eqref{eq:Pbound} and \eqref{eq:Pcont}. Since the right-hand side of \eqref{eq:Pderivbnd} is $o(\|\xi\|_{L^\infty(\Omega)})$, this completes the proof.        
\end{proof}

We next show that ${\rm tr}\, P(\sigma, \, \cdot \, ) N(\sigma) \in \mathscr{L}(L^\infty(\Omega), \mathscr{L}(H^{-1/2}_\diamond(\partial \Omega), H^{1/2}_\diamond(\partial \Omega)))$ is the Fr\'echet derivative of the standard forward map $L^\infty_+(\Omega) \ni \sigma \mapsto \Lambda(\sigma) = {\rm tr} \, N(\sigma) \in \mathscr{L}(H^{-1/2}_\diamond(\partial \Omega), H^{1/2}_\diamond(\partial \Omega))$. In fact, we introduce an explicit formula involving only ${\rm tr}$, $P(\sigma, \, \cdot \,)$, and $N(\sigma)$ for all derivatives of the standard forward map up to an arbitrary order. To this end, let $\rho_k$ be the collection of all permutations of indices up to $k\in \N$, i.e.
$$
{\rho_k = \{(\alpha_1, \ldots, \alpha_k) \ | \ \alpha_i \in \{1, \ldots, k\} \ \text{and} \ \alpha_i \neq \alpha_j \ \text{if} \ i \neq j\} }.
$$

\begin{theorem} \label{thm:NDdiff}
The standard forward map  $L^\infty_+(\Omega) \ni \sigma \mapsto \Lambda(\sigma) \in \mathscr{L}(H^{-1/2}_\diamond(\partial \Omega),H^{1/2}_\diamond(\partial \Omega))$ is infinitely times continuously Fr\'echet differentiable and its derivatives are defined by
\begin{equation}
\label{eq:Lderiva}
	D^k \! \Lambda(\sigma; \eta_1, \ldots, \eta_k) = \sum_{\alpha \in \rho_k} {\rm tr}\, P(\sigma, \eta_{\alpha_1})\ldots P(\sigma, \eta_{\alpha_k})N(\sigma), \qquad k \in \N,
\end{equation}
for  $\eta_1, \dots, \eta_k \in L^\infty(\Omega)$.
The standard forward map is also analytic:
	\begin{equation*}
		\Lambda(\sigma + \eta) = \sum_{k = 0}^\infty \frac{1}{k!} D^k \Lambda(\sigma; \eta, \ldots, \eta),
	\end{equation*}
	 for all $\sigma \in L^\infty_+(\Omega)$ and $\eta \in L^\infty(\Omega)$ such that $\sigma + \eta \in L^\infty_+(\Omega)$ and $\| P(\sigma, \eta) \|_{\mathscr{L}(H^1(\Omega)/\mathbb{C})} < 1$.
\end{theorem}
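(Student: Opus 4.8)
The plan is to establish \eqref{eq:Lderiva} by induction on $k$, using Lemma~\ref{lemma:perturbop} together with the product rule for Fr\'echet derivatives, and to derive analyticity from the geometric-series structure that \eqref{eq:Lderiva} exhibits. First I would verify the base case $k=1$: since $\Lambda(\sigma) = {\rm tr}\, N(\sigma)$ and ${\rm tr}$ is a fixed bounded operator, it suffices to show that $D N(\sigma;\xi) = P(\sigma,\xi)N(\sigma)$. This follows by subtracting the variational identities \eqref{eq:varcont} for $\sigma+\xi$ and $\sigma$, exactly as in the proof of \eqref{eq:Pdifference} in Lemma~\ref{lemma:perturbop}, yielding the exact identity $N(\sigma+\xi) - N(\sigma) = P(\sigma,\xi)N(\sigma+\xi)$; combined with the continuity of $\sigma\mapsto N(\sigma)$ (from \eqref{eq:Ubnd} and a one-line perturbation estimate) and the bound \eqref{eq:Pbound}, this gives differentiability with the claimed derivative, and the factor $P(\sigma,\xi)N(\sigma)$ is bounded in $\xi$ by \eqref{eq:Ubnd} and \eqref{eq:Pbound}.

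For the inductive step I would assume \eqref{eq:Lderiva} holds for some $k$ and differentiate the right-hand side term by term in $\sigma$. Each summand is a product of the fixed operator ${\rm tr}$, the factors $P(\sigma,\eta_{\alpha_i})$, and $N(\sigma)$; by the product rule for Banach-space-valued maps, its derivative in the direction $\eta_{k+1}$ is the sum over all positions at which a factor gets differentiated. By Lemma~\ref{lemma:perturbop}, $D_\sigma P(\sigma,\eta_{\alpha_i};\eta_{k+1}) = P(\sigma,\eta_{k+1})P(\sigma,\eta_{\alpha_i})$, i.e.\ differentiating a $P$-factor inserts a new $P(\sigma,\eta_{k+1})$ immediately to its left; by the base case, differentiating the trailing $N(\sigma)$ inserts $P(\sigma,\eta_{k+1})$ immediately to its left as well. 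Collecting all these insertions over all summands in $\rho_k$ and all positions produces precisely the sum over $\rho_{k+1}$: every permutation of $\{1,\dots,k+1\}$ arises exactly once, since it is obtained from its restriction to $\{1,\dots,k\}$ (an element of $\rho_k$) by inserting $k+1$ into the unique slot where it appears. The continuity of each derivative follows from the continuity of $\sigma\mapsto P(\sigma,\eta)$ and $\sigma\mapsto N(\sigma)$ and the fact that finite products of strongly continuous, locally uniformly bounded operator families are continuous.

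For analyticity, I would note that the $k!$ terms in \eqref{eq:Lderiva} evaluated at $\eta_1 = \cdots = \eta_k = \eta$ all coincide, so $\tfrac{1}{k!}D^k\Lambda(\sigma;\eta,\dots,\eta) = {\rm tr}\, P(\sigma,\eta)^k N(\sigma)$, and the Taylor series becomes ${\rm tr}\big(\sum_{k=0}^\infty P(\sigma,\eta)^k\big)N(\sigma) = {\rm tr}\,(I - P(\sigma,\eta))^{-1} N(\sigma)$, which converges in $\mathscr{L}(H^1(\Omega)/\C)$ precisely under the hypothesis $\|P(\sigma,\eta)\|_{\mathscr{L}(H^1(\Omega)/\C)} < 1$. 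It remains to identify this sum with $\Lambda(\sigma+\eta)$: subtracting the variational formulations shows $N(\sigma+\eta) = (I - P(\sigma,\eta))^{-1} N(\sigma)$ whenever the Neumann series converges, whence $\Lambda(\sigma+\eta) = {\rm tr}\, N(\sigma+\eta)$ equals the series. I expect the main obstacle to be purely bookkeeping: carefully tracking the positions of inserted $P$-factors so that the inductive step lands on exactly $\rho_{k+1}$ with no repeated or missing permutations; the analytic and continuity parts are routine given \eqref{eq:Pbound}, \eqref{eq:Ubnd}, and Lemma~\ref{lemma:perturbop}.
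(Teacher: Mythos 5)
Your proposal is correct and follows essentially the same route as the paper: the key variational identity $N(\sigma+\eta)-N(\sigma)=P(\sigma,\eta)N(\sigma+\eta)$, the product rule combined with Lemma~\ref{lemma:perturbop} for the higher derivatives, and the Neumann series $(I-P(\sigma,\eta))^{-1}N(\sigma)$ for analyticity. The only (immaterial) difference is order of presentation: the paper establishes the Neumann series first and reads off the first derivative from it, whereas you prove differentiability directly from the identity and then resum the Taylor series.
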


\begin{proof}
Since $\Lambda(\sigma) = {\rm tr} \, N(\sigma)$ and the map ${\rm tr}: H^1(\Omega)/\C \to H^{1/2}_\diamond(\partial \Omega)$ is linear, bounded, and independent of $\sigma \in L^\infty(\Omega)$, it suffices to prove that $L^\infty_+(\Omega) \ni \sigma \mapsto N(\sigma) \in \mathscr{L}(H^{-1/2}_\diamond(\partial \Omega),H^1(\Omega)/\C)$ is analytic and its derivatives are given as
\begin{equation}
\label{eq:Nderiva}
D^k \!  N(\sigma; \eta_1, \ldots, \eta_k) = \sum_{\alpha \in \rho_k} P(\sigma, \eta_{\alpha_1})\ldots P(\sigma, \eta_{\alpha_k})N(\sigma), \qquad k \in \N,
\end{equation}
for any $\eta_1, \dots, \eta_k \in L^\infty(\Omega)$. Observe that the formula \eqref{eq:Nderiva} clearly defines $D^k \!  N(\sigma; \,\cdot\, , \ldots, \,\cdot\,)$ as an element of $\mathscr{L}(L^\infty(\Omega)^k, \mathscr{L}(H^{-1/2}_\diamond(\partial \Omega), H^1(\Omega)/\C))$ for any $\sigma \in L^\infty_+(\Omega)$ due to \eqref{eq:Ubnd}, \eqref{eq:Pbound}, and the linearity of $P$ in its second variable.

Let $\sigma \in L^\infty_+(\Omega)$ be arbitrary and $\eta \in L^\infty(\Omega)$ such that $\sigma + \eta \in L^\infty_+(\Omega)$. To begin with, note that by virtue of \eqref{eq:varcont} and the definition of $N(\sigma)$,
\begin{equation*}
\int_\Omega (\sigma + \eta) \nabla N(\sigma + \eta) f \cdot \nabla\overline{v} \, {\rm d} x = \big\langle f, v|_{\partial \Omega} \big\rangle = \int_\Omega \sigma \nabla N(\sigma) f \cdot \nabla\overline{v} \, {\rm d} x
\end{equation*}
for all $f \in H^{-1/2}_\diamond(\partial \Omega)$ and $v \in H^1(\Omega)/\C$.
It thus follows from the definition of $P(\sigma, \eta)$ that
	\begin{align*}
		\int_\Omega \sigma \nabla P(\sigma, \eta) N(\sigma + \eta) f \cdot \nabla\overline{v} \, {\rm d} x =& -\int_\Omega \eta \nabla N(\sigma + \eta) f \cdot \nabla\overline{v} \, {\rm d} x\\
		=& \int_\Omega (\sigma + \eta) \nabla N(\sigma + \eta) f \cdot \nabla\overline{v} \, {\rm d} x - \int_\Omega \sigma \nabla N(\sigma) f \cdot \nabla\overline{v} \, {\rm d} x \\
		&- \int_\Omega \eta \nabla N(\sigma + \eta) f \cdot \nabla\overline{v} \, {\rm d} x \\
		=& \int_\Omega \sigma \nabla \big(N(\sigma + \eta) - N(\sigma)\big) f \cdot \nabla\overline{v} \, {\rm d} x
	\end{align*}
for all $v \in H^1(\Omega)/\mathbb{C}$ and $f \in H^{-1/2}_\diamond(\partial \Omega)$. Hence, it must hold (cf.~\eqref{eq:innerp}) that 
\begin{equation*}
P(\sigma, \eta) N(\sigma + \eta) = N(\sigma + \eta)  - N(\sigma).	
\end{equation*}
Rearranging this equality as $(I - P(\sigma, \eta))N(\sigma + \eta) = N(\sigma)$ and requiring $\|\eta\|_{L^\infty(\Omega)}$ to be small enough to guarantee ${\| P(\sigma, \eta) \|_{\mathscr{L}(H^1(\Omega)/\mathbb{C})} < 1}$ (cf.~\eqref{eq:Pbound}), we may employ a Neumann series to write
\begin{equation*}
N(\sigma + \eta) = \sum_{k=0}^\infty P(\sigma, \eta)^k N(\sigma).
\end{equation*}
This proves the analyticity of $\sigma\mapsto N(\sigma)$ and, in particular, shows that $P(\sigma, \, \cdot \,) N(\sigma)$ is indeed its Fr\'echet derivative.

Using the product rule for Banach spaces and Lemma~\ref{lemma:perturbop}, the second Fr\'echet derivative of $\sigma\mapsto N(\sigma)$ can be written as 
	\begin{equation*}
		D^2 \! N(\sigma; \eta, \xi) = P(\sigma, \xi) P(\sigma, \eta) N(\sigma) + P(\sigma, \eta) P(\sigma, \xi) N(\sigma)
	\end{equation*}
 for all $\sigma \in L^\infty_+(\Omega)$ and $\eta, \xi \in L^\infty(\Omega)$.
The formula \eqref{eq:Nderiva} for an arbitrary $k \in \N$ then follows by recursively applying the product rule. The continuity of the derivatives is an immediate consequence of their Fr\'echet differentiability.
\end{proof}

As $\Lambda(\sigma): H^{-1/2}_\diamond(\partial \Omega) \to H^{1/2}_\diamond(\partial \Omega)$ is symmetric with respect to the dual bracket, the same also holds for all its derivatives $D^k \!\Lambda(\sigma; \eta_1, \ldots, \eta_k): H^{-1/2}_\diamond(\partial \Omega) \to H^{1/2}_\diamond(\partial \Omega)$ for any $k \in \N$ and all $\eta_1, \dots, \eta_k \in L^\infty(\Omega)$. Indeed, it is easy to check by recursively employing the definition of Fr\'echet differentiability that the symmetric part $\tfrac{1}{2} ( D^k \!\Lambda(\sigma; \eta_1, \ldots, \eta_k) + D^k \!\Lambda(\sigma; \eta_1, \ldots, \eta_k)^* )$ also defines a $k$th derivative for $\sigma \mapsto \Lambda(\sigma)$. Hence, any non-symmetry of $D^k \!\Lambda(\sigma; \eta_1, \ldots, \eta_k)$ would contradict the uniqueness of Fr\'echet derivatives.

We complete this appendix by presenting a corollary that covers \eqref{eq:FrDB} and \eqref{eq:FrDB2} as special cases.

\begin{corollary} \label{coro:NDdiffbound}
For any $\sigma \in L^\infty_+(\Omega)$, it holds
	\begin{equation*}
		\|D^k\! \Lambda(\sigma; \,\cdot \,, \dots , \, \cdot \,) \|_{\mathscr{L}(L^\infty(\Omega)^k, \mathscr{L}(H^{-1/2}_\diamond(\partial \Omega), H^{1/2}_\diamond(\partial \Omega)))} \leq \frac{C}{\essinf \sigma^{k+1}},
	\end{equation*}
where $C = C(\Omega, k) > 0$ is independent of $\sigma$.
\end{corollary}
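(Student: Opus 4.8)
The plan is to read the bound straight off the explicit derivative formula \eqref{eq:Lderiva} in Theorem~\ref{thm:NDdiff}, combined with the three elementary operator-norm estimates already assembled in this appendix: the boundedness of $\mathrm{tr}\colon H^1(\Omega)/\C \to H^{1/2}_\diamond(\partial\Omega)$ (inherited from the trace theorem), the bound $\|N(\sigma)\|_{\mathscr{L}(H^{-1/2}_\diamond(\partial\Omega),\,H^1(\Omega)/\C)} \le C(\Omega)/\essinf\sigma$ coming from \eqref{eq:Ubnd}, and the bound \eqref{eq:Pbound} for $P(\sigma,\eta)$. Let $C(\Omega)$ henceforth denote the largest of the constants appearing in these three estimates.

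First I would fix $\sigma\in L^\infty_+(\Omega)$ and arbitrary $\eta_1,\dots,\eta_k\in L^\infty(\Omega)$, apply the triangle inequality to the finite sum over $\alpha\in\rho_k$ in \eqref{eq:Lderiva}, and estimate each summand by submultiplicativity of the operator norm along the chain $\mathrm{tr}\circ P(\sigma,\eta_{\alpha_1})\circ\cdots\circ P(\sigma,\eta_{\alpha_k})\circ N(\sigma)$. The outer trace contributes a factor $C(\Omega)$, the innermost $N(\sigma)$ contributes $C(\Omega)/\essinf\sigma$, and each of the $k$ intermediate factors $P(\sigma,\eta_{\alpha_i})$ contributes $(C(\Omega)/\essinf\sigma)\|\eta_{\alpha_i}\|_{L^\infty(\Omega)}$. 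Since $(\eta_{\alpha_1},\dots,\eta_{\alpha_k})$ is merely a reordering of $(\eta_1,\dots,\eta_k)$, the product of the $L^\infty$-norms is the same for every $\alpha$, so each of the $|\rho_k| = k!$ summands is bounded by $C(\Omega)^{k+2}\,\essinf\sigma^{-(k+1)}\prod_{i=1}^k\|\eta_i\|_{L^\infty(\Omega)}$.

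Summing the $k!$ identical bounds yields
\[
\|D^k\!\Lambda(\sigma;\eta_1,\dots,\eta_k)\|_{\mathscr{L}(H^{-1/2}_\diamond(\partial\Omega),\,H^{1/2}_\diamond(\partial\Omega))} \le k!\,C(\Omega)^{k+2}\,\essinf\sigma^{-(k+1)}\prod_{i=1}^k\|\eta_i\|_{L^\infty(\Omega)},
\]
and taking the supremum over all $\eta_i$ in the unit ball of $L^\infty(\Omega)$ gives the asserted estimate with $C(\Omega,k):=k!\,C(\Omega)^{k+2}$, which is indeed independent of $\sigma$ because $C(\Omega)$ is. There is no genuine obstacle here; the only point deserving a moment's care is the bookkeeping of the multilinear operator norm, namely recalling that $\|D^k\!\Lambda(\sigma;\,\cdot\,,\dots,\,\cdot\,)\|_{\mathscr{L}(L^\infty(\Omega)^k,\,\mathscr{L}(H^{-1/2}_\diamond(\partial\Omega),\,H^{1/2}_\diamond(\partial\Omega)))}$ is by definition the supremum of $\|D^k\!\Lambda(\sigma;\eta_1,\dots,\eta_k)\|$ over all choices with $\|\eta_i\|_{L^\infty(\Omega)}\le 1$, and checking that the special cases $k=1,2$ reproduce \eqref{eq:FrDB} and \eqref{eq:FrDB2}.
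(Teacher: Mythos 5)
Your argument is correct and is precisely the paper's own (very terse) proof spelled out in detail: the paper likewise obtains the bound directly from \eqref{eq:Lderiva} via \eqref{eq:Ubnd}, \eqref{eq:Pbound}, and the boundedness of the nonstandard trace map, with the $k!$ and the powers of $\essinf\sigma$ accounted for exactly as you describe.
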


\begin{proof}
 The claim is an immediate consequence of \eqref{eq:Lderiva}, \eqref{eq:Ubnd}, \eqref{eq:Pbound}, and the boundedness of the nonstandard trace map ${\rm tr}: H^1(\Omega)/\C \to H^{1/2}_\diamond(\partial \Omega)$.
\end{proof}

\section{On equivalent norms for \texorpdfstring{$H_\diamond^r(\partial \Omega)$}{mean-free H\^{}r}}
\label{app:norms}
Let $\{ \lambda_k(\sigma), \phi_k(\sigma) \}_{k \in \N}$ be a normalized eigensystem for the ND operator $\Lambda(\sigma): H_\diamond^{-1/2}(\partial \Omega) \to  H_\diamond^{1/2}(\partial \Omega)$, with $\sigma \in L^\infty_+(\Omega)$ and a bounded Lipschitz domain $\Omega$. Consult Section~\ref{sec:param} for more detailed definitions of these entities.
Let us start by introducing the (unbounded for $r<0$) powers of $\Lambda(\sigma): L^2_\diamond(\partial \Omega) \to L^2_\diamond(\partial \Omega)$ defined via
\begin{equation}
\label{eq:Lpower}
\Lambda^{2r}(\sigma): f \mapsto \sum_{k=1}^\infty \lambda_k^{2r}(\sigma) \,\langle f, \phi_k(\sigma) \rangle \, \phi_k(\sigma)
\end{equation}
for $-\tfrac{1}{2} \leq r \leq \tfrac{1}{2}$. 

\begin{proposition}
	\label{prop:power_map}
	The operator $\Lambda^{2r}(\sigma)$ defined by \eqref{eq:Lpower} can be interpreted as a symmetric isomorphism from $H^{-r}_\diamond(\partial \Omega)$ to  $H^{r}_\diamond(\partial \Omega)$ for any $-\tfrac{1}{2} \leq r \leq \tfrac{1}{2}$.
\end{proposition}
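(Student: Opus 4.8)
The plan is to reduce the proposition to a single family of norm equivalences on the mean-free Sobolev spaces and then obtain that family by interpolation between the endpoints $r=\pm\tfrac12$. Throughout, write $f_k:=\langle f,\phi_k(\sigma)\rangle$ for the (dual-pairing) coefficients of $f$ in the eigensystem of $\Lambda(\sigma)$, and set $|||f|||_s^2:=\sum_k\lambda_k^{-2s}|f_k|^2$. I would first observe that it suffices to prove $\|\cdot\|_{H^s(\partial\Omega)}\asymp|||\cdot|||_s$ on $H^s_\diamond(\partial\Omega)$ for $s\in[-\tfrac12,\tfrac12]$, with constants allowed to depend on $\sigma$ (making them independent of $\sigma$ on a range $\varsigma_-\le\sigma\le\varsigma_+$ is the separate content of Theorem~\ref{thm:norm_equi}). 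Granting this, $\Lambda^{2r}(\sigma)$ as defined by \eqref{eq:Lpower} satisfies $|||\Lambda^{2r}(\sigma)f|||_r=|||f|||_{-r}$ on the dense span of the $\phi_k$, hence extends to a bounded and bounded-below map $H^{-r}_\diamond(\partial\Omega)\to H^r_\diamond(\partial\Omega)$ whose range is closed and contains that dense span, and is therefore an isomorphism; its symmetry with respect to the dual bracket is read off from $\langle f,\Lambda^{2r}(\sigma)g\rangle=\sum_k\lambda_k^{2r}\overline{f_k}g_k$, first on finite sums and then by density. The endpoint exponents $r=\pm\tfrac12$ and $r=0$ are of course already covered by Section~\ref{sec:param}: $\Lambda(\sigma)$ is a symmetric isomorphism $H^{-1/2}_\diamond(\partial\Omega)\to H^{1/2}_\diamond(\partial\Omega)$, $\Lambda^{-1}(\sigma)$ is its (symmetric) inverse, and $\Lambda^0(\sigma)=I$ on $L^2_\diamond(\partial\Omega)$.

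Next I would establish the norm equivalence at the three exponents $s\in\{-\tfrac12,0,\tfrac12\}$, from which the rest follows by interpolation. The case $s=0$ is the identity $|||f|||_0=\|f\|_{L^2(\partial\Omega)}$. For $s=-\tfrac12$: since $\Lambda(\sigma)f=\sum_k\lambda_k f_k\phi_k$ converges in $H^{1/2}_\diamond(\partial\Omega)$ for every $f\in H^{-1/2}_\diamond(\partial\Omega)$ (recorded after \eqref{eq:spectral}), the pairing $\langle f,\Lambda(\sigma)f\rangle$ equals $\sum_k\lambda_k|f_k|^2=|||f|||_{-1/2}^2$; combining the boundedness estimate $\langle f,\Lambda(\sigma)f\rangle\le C\|f\|_{H^{-1/2}}^2$ with the coercivity bound $\langle f,\Lambda(\sigma)f\rangle\ge c\|f\|_{H^{-1/2}}^2$ from Section~\ref{sec:param} yields $\|f\|_{H^{-1/2}}\asymp|||f|||_{-1/2}$. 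For $s=\tfrac12$: apply the case just proved to $\Lambda^{-1}(\sigma)g\in H^{-1/2}_\diamond(\partial\Omega)$ for $g\in H^{1/2}_\diamond(\partial\Omega)$; using $(\Lambda^{-1}(\sigma)g)_k=\lambda_k^{-1}g_k$ together with $\|\Lambda^{-1}(\sigma)g\|_{H^{-1/2}}\asymp\|g\|_{H^{1/2}}$ one gets $\|g\|_{H^{1/2}}^2\asymp|||\Lambda^{-1}(\sigma)g|||_{-1/2}^2=\sum_k\lambda_k^{-1}|g_k|^2=|||g|||_{1/2}^2$.

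To pass to all $s\in[-\tfrac12,\tfrac12]$ I would interpolate two compatible scales. Via the coefficient map $f\mapsto(f_k)_k$, the couple $(H^{-1/2}_\diamond(\partial\Omega),H^{1/2}_\diamond(\partial\Omega))$ is — by the two endpoint equivalences and the density of $\mathrm{span}\{\phi_k\}$ in all the spaces involved — isomorphic to the weighted sequence couple $(\ell^2(\lambda_k),\ell^2(\lambda_k^{-1}))$. Complex interpolation of weighted $\ell^2$ spaces gives $[\ell^2(\lambda_k),\ell^2(\lambda_k^{-1})]_\theta=\ell^2(\lambda_k^{1-2\theta})$, while the Sobolev scale satisfies $[H^{-1/2}_\diamond(\partial\Omega),H^{1/2}_\diamond(\partial\Omega)]_\theta=H^{\theta-1/2}_\diamond(\partial\Omega)$. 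Since the two couples agree at $\theta=0$ and $\theta=1$ up to equivalence of norms, so do their $\theta$-interpolants; writing $s=\theta-\tfrac12$ this is precisely $\|\cdot\|_{H^s}\asymp|||\cdot|||_s$ for $s\in[-\tfrac12,\tfrac12]$, and the proof is complete once one notes that for $r\in[-\tfrac12,0)$ the operator $\Lambda^{2r}(\sigma)$ is simply the inverse of the already-treated isomorphism $\Lambda^{-2r}(\sigma)\colon H^{-r}_\diamond(\partial\Omega)\to H^r_\diamond(\partial\Omega)$.

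The step requiring the most care is this last one: justifying that complex interpolation commutes with the mean-free restriction — so that $[H^{-1/2}_\diamond,H^{1/2}_\diamond]_\theta$ is $H^s_\diamond(\partial\Omega)$ itself and not merely a subspace of $H^s(\partial\Omega)$ — and that the coefficient map is genuinely an isomorphism of interpolation couples. The former is handled by the splitting $H^s(\partial\Omega)=H^s_\diamond(\partial\Omega)\oplus\mathrm{span}(1)$, which is compatible in $s$ and induces a bounded projection onto the mean-free part, so that interpolation respects it; for the latter one checks that the coefficient map carries $H^{\pm1/2}_\diamond(\partial\Omega)$ isomorphically onto $\ell^2(\lambda_k^{\mp1})$ with the two endpoint identifications compatible. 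An alternative route that avoids the sequence-space couple is Stein's interpolation theorem applied to the analytic family $z\mapsto\Lambda^z(\sigma)$ on the strip $0\le\mathrm{Re}\,z\le1$: on $\mathrm{Re}\,z=0$ the operators $\Lambda^{it}(\sigma)$ are unitary on $L^2_\diamond(\partial\Omega)$, and on $\mathrm{Re}\,z=1$ the $s=\pm\tfrac12$ equivalences show $\Lambda^{1+it}(\sigma)\in\mathscr{L}(H^{-1/2}_\diamond(\partial\Omega),H^{1/2}_\diamond(\partial\Omega))$ with norm uniform in $t$; the theorem then produces $\Lambda^{2r}(\sigma)\in\mathscr{L}(H^{-r}_\diamond(\partial\Omega),H^r_\diamond(\partial\Omega))$ for $r\in[0,\tfrac12]$, and applying it to $z\mapsto\Lambda^{-z}(\sigma)$ gives the bounded inverse, with $r<0$ following by inversion as before.
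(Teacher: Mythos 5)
Your argument is correct, but it takes a genuinely different route from the paper's. The paper's proof is short and citation-based: it quotes from the proof of \cite[Lemma~1, Appendix~A]{Hyvonen18} that $\Lambda^{r}(\sigma)\colon L^2_\diamond(\partial\Omega)\to H^{r}_\diamond(\partial\Omega)$ is an isomorphism for $0\le r\le\tfrac12$ with inverse $\Lambda^{-r}(\sigma)$, checks that the dual operator $(\Lambda^{r}(\sigma))^*\colon H^{-r}_\diamond(\partial\Omega)\to L^2_\diamond(\partial\Omega)$ is given by the same spectral formula, and obtains the desired extension of $\Lambda^{2r}(\sigma)$ as the composition $\Lambda^{r}(\sigma)(\Lambda^{r}(\sigma))^*$; the range $-\tfrac12\le r\le 0$ is then handled by inversion, exactly as in your closing remark. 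You instead rebuild the underlying norm equivalence \eqref{eq:norm_equi} from scratch --- your $|||\cdot|||_s$ is precisely the paper's $\|\cdot\|_{s,\sigma}$ from \eqref{eq:s_sigma_norm} --- deriving the endpoints $s=\pm\tfrac12$ from the coercivity and boundedness of $\Lambda(\sigma)$ recorded in Section~\ref{sec:param} and the intermediate exponents by complex interpolation against weighted $\ell^2$ couples, after which the proposition reduces to the tautology that $\Lambda^{2r}(\sigma)$ is an isometry from $\|\cdot\|_{-r,\sigma}$ to $\|\cdot\|_{r,\sigma}$. What your version buys is self-containedness (it in effect reproves the external result that the paper merely quotes) and a clear identification of the two delicate points, namely that interpolation commutes with the mean-free splitting via the common bounded projection and that the coefficient map is a morphism of couples; what it costs is reliance on the interpolation identity $[H^{-1/2}(\partial\Omega),H^{1/2}(\partial\Omega)]_\theta=H^{\theta-1/2}(\partial\Omega)$ on a Lipschitz boundary, machinery the paper's duality argument never needs. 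One cosmetic point: with the paper's sesquilinear convention the coefficient sum in your symmetry check should read $\sum_k\lambda_k^{2r}f_k\overline{g_k}$ rather than $\sum_k\lambda_k^{2r}\overline{f_k}g_k$, but the conclusion $\langle f,\Lambda^{2r}(\sigma)g\rangle=\overline{\langle g,\Lambda^{2r}(\sigma)f\rangle}$ is unaffected.
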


\begin{proof}
	As indicated,~e.g.,~in the proof of \cite[Lemma~1, Appendix~A]{Hyvonen18}, the operator $\Lambda^{r}(\sigma)$, $0 \leq r \leq \tfrac{1}{2}$, is an isomorphism from $L^2_\diamond(\partial \Omega)$ to $H^{r}_\diamond(\partial \Omega)$ with the inverse $\Lambda^{-r}(\sigma)$. It is straightforward to check that the isomorphic dual operator $(\Lambda^{r}(\sigma))^*: H^{-r}_\diamond(\partial \Omega) \to L^2_\diamond(\partial \Omega)$ is also defined by \eqref{eq:Lpower} and, in particular, coincides with $\Lambda^{r}(\sigma)$ on $L^2_\diamond(\partial \Omega)$. Hence,
	\begin{equation*}
	\Lambda^{r}(\sigma) (\Lambda^{r}(\sigma))^*: f \mapsto  \sum_{k=1}^\infty \lambda_k^{2r}(\sigma) \,\langle f, \phi_k(\sigma) \rangle  \, \phi_k(\sigma)
	\end{equation*}
	is an isomorphism between $H^{-r}_\diamond(\partial \Omega)$ and $H^{r}_\diamond(\partial \Omega)$ for any $0 \leq r \leq \tfrac{1}{2}$ and it also obviously coincides with $\Lambda^{2r}(\sigma)$ on $L^2_{\diamond}(\partial \Omega)$, thus providing the sought for extension. To complete the proof, the claim for $-\tfrac{1}{2} \leq r \leq 0$ follows by simply considering the inverse of the isomorphic extension $\Lambda^{-2r}(\sigma) := \Lambda^{-r}(\sigma) (\Lambda^{-r}(\sigma))^*:  H^{r}_\diamond(\partial \Omega) \to H^{-r}_\diamond(\partial \Omega)$ constructed above.
\end{proof}

In the following, we drop the `dual star notation' and write any power of the Neumann-to-Dirichlet map as $\Lambda^{r}(\sigma)$ independently of its domain of definition that should be clear from the context. In particular, note that all of these powers are defined by \eqref{eq:Lpower}.

According to \cite[Lemma~1, Appendix~A]{Hyvonen18} and the remark preceding it, there exist constants $c,C>0$, depending only on $\sigma$, $\Omega$, and $-1/2\leq r \leq 1/2$, such that
\begin{equation}
\label{eq:norm_equi}
c \| f \|_{r,\sigma} \leq \| f \|_{H^r(\partial \Omega)} \leq C \| f \|_{r, \sigma} \qquad {\rm for} \ {\rm all} \ f \in H_\diamond^r(\partial \Omega),
\end{equation}
where the equivalent norm for the mean-free Sobolev space $H_\diamond^r(\partial \Omega)$ is defined via
\begin{equation}
\label{eq:s_sigma_norm}
\| f \|_{r,\sigma}^2 := \sum_{k=1}^\infty \lambda_k^{-2r}(\sigma) |\langle f, \phi_k(\sigma) \rangle |^2, \qquad -1/2\leq r \leq 1/2.
\end{equation}
The main result of this appendix essentially states that the constants in \eqref{eq:norm_equi} can be chosen to only depend on $\essinf \sigma$ and $\esssup \sigma$, not on $\sigma$ itself.

\begin{theorem}
	\label{thm:norm_equi}
	Let $-1/2\leq r \leq 1/2$ and $0 < \varsigma_- \leq \varsigma_+ < \infty$ be fixed. Then all norms $\| \cdot \|_{\sigma, r}: H_\diamond^r(\partial \Omega) \to \overline{\R_+}$ defined by \eqref{eq:s_sigma_norm} with some $\sigma \in L^\infty_+(\Omega)$ satisfying
	\begin{equation*}
	\varsigma_- \leq \sigma \leq \varsigma_+ \qquad {\rm a.e.} \ {\rm in} \ \Omega
	\end{equation*} 
	are jointly equivalent in the sense that \eqref{eq:norm_equi} holds with constants $c = c(\Omega,r, \varsigma_-, \varsigma_+)>0$ and $C = C(\Omega,r, \varsigma_-, \varsigma_+)>0$ independent of the actual conductivity  $\sigma$.
\end{theorem}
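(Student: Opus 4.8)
\textit{Proof proposal.} The plan is to compare each norm $\|\cdot\|_{r,\sigma}$ with the \emph{single} reference norm $\|\cdot\|_{r,1}$ attached to the constant conductivity $\sigma\equiv1$. Since \eqref{eq:norm_equi} applied with $\sigma\equiv1$ and index $r$ already gives $\|\cdot\|_{r,1}\sim\|\cdot\|_{H^r(\partial\Omega)}$ with constants $c=c(\Omega,r)$ and $C=C(\Omega,r)$, it suffices to show that $\|\cdot\|_{r,\sigma}$ and $\|\cdot\|_{r,1}$ are equivalent on $H^r_\diamond(\partial\Omega)$ with constants depending only on $r$, $\varsigma_-$ and $\varsigma_+$. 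The starting point is the identity $\|f\|_{r,\sigma}^2=\langle\Lambda^{-2r}(\sigma)f,f\rangle$, immediate from \eqref{eq:s_sigma_norm} and \eqref{eq:Lpower} with the power understood as in Proposition~\ref{prop:power_map}. This recasts the claim as a statement about how the quadratic forms of the powers of $\Lambda(\sigma)$ depend on $\sigma$.

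The structural input is the monotonicity of the Neumann-to-Dirichlet map with respect to the conductivity, together with its homogeneity: for conductivities $\sigma_1\le\sigma_2$ one has $\Lambda(\sigma_2)\le\Lambda(\sigma_1)$ as bounded positive operators on $L^2_\diamond(\partial\Omega)$, and $\Lambda(c\sigma)=c^{-1}\Lambda(\sigma)$ for every constant $c>0$ (cf.\ Lemma~\ref{lemma:inverse} and the standard variational/scaling arguments). Applying this to $\varsigma_-\le\sigma\le\varsigma_+$ and using homogeneity at the constant conductivities $\varsigma_\pm$ yields the two-sided operator inequality
\[
\varsigma_+^{-1}\,\Lambda(1)\ \le\ \Lambda(\sigma)\ \le\ \varsigma_-^{-1}\,\Lambda(1)
\]
between bounded positive operators on $L^2_\diamond(\partial\Omega)$.

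I would then transfer this inequality to the relevant powers. For $r\in[-\tfrac12,0]$ the exponent $-2r$ lies in $[0,1]$, so the L\"owner--Heinz inequality applied to the bounded positive operators above gives $\varsigma_+^{2r}\Lambda(1)^{-2r}\le\Lambda(\sigma)^{-2r}\le\varsigma_-^{2r}\Lambda(1)^{-2r}$; taking quadratic forms and using the identity of the first paragraph produces $\varsigma_+^{2r}\|f\|_{r,1}^2\le\|f\|_{r,\sigma}^2\le\varsigma_-^{2r}\|f\|_{r,1}^2$, first for $f\in L^2_\diamond(\partial\Omega)$ and then, by density (both norms being equivalent to $\|\cdot\|_{H^r(\partial\Omega)}$ for the fixed $\sigma$), for all $f\in H^r_\diamond(\partial\Omega)$. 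The remaining range $r\in(0,\tfrac12]$ follows by duality: a weighted Cauchy--Schwarz computation in the eigenbasis $\{\phi_k(\sigma)\}$ shows that $\|f\|_{r,\sigma}=\sup\{|\langle f,g\rangle|/\|g\|_{-r,\sigma}:0\neq g\in L^2_\diamond(\partial\Omega)\}$, and likewise for $\sigma\equiv1$; since $-r\in[-\tfrac12,0)$ is covered by the case already treated, the uniform equivalence $\|g\|_{-r,\sigma}\sim\|g\|_{-r,1}$ transfers, with reciprocal constants, to $\|f\|_{r,\sigma}\sim\|f\|_{r,1}$. The case $r=0$ is trivial since $\|\cdot\|_{0,\sigma}=\|\cdot\|_{L^2(\partial\Omega)}$ for every $\sigma$. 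Combining all these equivalences with \eqref{eq:norm_equi} at $\sigma\equiv1$ completes the proof.

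The main point requiring care is the structural input and its transfer: comparing norms built from the $\sigma$-dependent eigensystems $\{\lambda_k(\sigma),\phi_k(\sigma)\}$ for different conductivities would be hopeless without exploiting the positivity and monotonicity of the forward map, so the operator inequality $\varsigma_+^{-1}\Lambda(1)\le\Lambda(\sigma)\le\varsigma_-^{-1}\Lambda(1)$ is the heart of the argument, and L\"owner--Heinz does the rest for non-integer powers. The only genuine technical subtlety is that $\Lambda^{-2r}(\sigma)$ is unbounded for $r>0$, which is precisely why I treat that range by duality — so that L\"owner--Heinz is only ever invoked for the bounded positive operators $\Lambda(\sigma)^{|2r|}$ — rather than manipulating unbounded operators (or their Balakrishnan-type integral representations) directly.
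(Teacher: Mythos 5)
Your argument is correct, and for $r\in[-\tfrac12,0]$ it coincides with the paper's proof: both rest on the monotonicity relation \eqref{eq:monot} combined with the L\"owner--Heinz inequality applied to the bounded positive operators $\Lambda(\varsigma_\pm)$ and $\Lambda(\sigma)$ (your normalization via $\Lambda(\varsigma_\pm)=\varsigma_\pm^{-1}\Lambda(1)$ is just the scaling identity and changes nothing), followed by the known equivalence \eqref{eq:norm_equi} for a fixed constant conductivity. The only genuine divergence is in the range $r\in(0,\tfrac12]$. The paper proves an explicit operator-inversion statement, Lemma~\ref{lemma:inverse}, by conjugating the quadratic-form inequality with $\Lambda^{-r}(\varsigma_\pm)$ and passing through a positive self-adjoint square root to reverse it; you instead characterize $\|f\|_{r,\sigma}$ as the dual norm of $\|\cdot\|_{-r,\sigma}$ via a weighted Cauchy--Schwarz in the eigenbasis and transfer the already-established uniform equivalence with reciprocal constants. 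The two mechanisms prove the same intermediate fact; your duality route avoids manipulating operator square roots (and keeps L\"owner--Heinz confined to bounded operators, as you note), at the cost of having to justify the attainment of the supremum in the dual characterization --- a routine truncation argument in the eigenbasis, but one that should be spelled out since $\Lambda^{-2r}(\sigma)$ is unbounded on $L^2_\diamond(\partial\Omega)$ for $r>0$. Either way the constants depend only on $\Omega$, $r$, $\varsigma_-$, $\varsigma_+$, as required.
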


\begin{proof}
	To summarize, the assertion is a consequence of two results, with the first being the L\"owner--Heinz inequality \cite[Satz~3,~p.~426]{Heinz51} stating that $t\mapsto t^\alpha$ is \emph{operator monotone} for $\alpha\in[0,1]$; see \cite{Pedersen_1972} for a short proof. The second result is the monotonicity relation \cite{Ikehata98,Kang97}:
	\begin{equation}
	\label{eq:monot}
	\langle f, \Lambda(\sigma_2) f \rangle \leq \langle f, \Lambda(\sigma_1) f \rangle \qquad {\rm for} \ {\rm all} \  f \in H^{-1/2}_\diamond(\partial \Omega)
	\end{equation}
	if $\sigma_1 \leq \sigma_2$ almost everywhere in $\Omega$.

	Let us first consider the case $-1/2 \leq r \leq 0$. By applying the L\"owner--Heinz inequality with the power $0 \leq -2r \leq 1$ to the self-adjoint operators $\Lambda(\varsigma_+), \Lambda(\sigma), \Lambda(\varsigma_-): L^2_\diamond(\partial \Omega) \to L^2_\diamond(\partial \Omega)$, we get
	\begin{align}
	\label{eq:monot_frac}
	\langle f, \Lambda^{-2r}(\varsigma_+) f \rangle
	\leq \langle f, \Lambda^{-2r}(\sigma) f \rangle \leq  \langle f, \Lambda^{-2r}(\varsigma_-) f \rangle \qquad {\rm for} \ {\rm all} \ f \in L^2_\diamond(\partial \Omega).
	\end{align}
	By the density of the continuous embedding $L^2_\diamond(\partial \Omega) \hookrightarrow H^{r}_\diamond(\partial \Omega)$ and the boundedness of $\Lambda^{-2r}(\varsigma_+)$, $\Lambda^{-2r}(\sigma)$, $\Lambda^{-2r}(\varsigma_-): H^r_\diamond(\partial \Omega) \to H^{-r}_\diamond(\partial \Omega)$ guaranteed by Proposition~\ref{prop:power_map}, the inequality \eqref{eq:monot_frac} holds, in fact, for all $f \in H^r_\diamond(\partial \Omega)$.
	In particular, \eqref{eq:monot_frac} is just another way of writing
	\begin{equation*}
	\| f \|_{r,\varsigma_+}^2 \leq \| f \|_{r,\sigma}^2 \leq \| f \|_{r,\varsigma_-}^2
	\qquad {\rm for} \ {\rm all} \ f \in H^{r}_\diamond(\partial \Omega).
	\end{equation*}
	Hence, for all $f \in H^{r}_\diamond(\partial \Omega)$,
	\begin{equation}
	\label{eq:norm_equi_minus}
	c(\varsigma_-) \| f \|_{r,\sigma} \leq c(\varsigma_-) \| f \|_{r,\varsigma_-} \leq \| f \|_{H^r(\partial \Omega)} \leq C(\varsigma_+) \| f \|_{r,\varsigma_+} \leq
	C(\varsigma_+)  \| f \|_{r,\sigma},
	\end{equation}
	where the constants $c(\varsigma_-) =  c(\Omega,\varsigma_-,r) > 0$ and $C(\varsigma_+) = C(\Omega, \varsigma_+,r)$ correspond to the homogeneous conductivities $\varsigma_-$ and $\varsigma_+$ in \eqref{eq:norm_equi}, respectively. This completes the proof for $-1/2 \leq r \leq 0$.
	
	Observe that \eqref{eq:monot_frac} induces the `inverse estimate'
	\begin{equation}
	\label{eq:inverse}
	\langle \Lambda^{-2r}(\varsigma_-) f, f \rangle
	\leq \langle \Lambda^{-2r}(\sigma) f,f \rangle \leq  \langle \Lambda^{-2r}(\varsigma_+) f, f \rangle \qquad {\rm for} \ {\rm all} \ f \in H^{r}_\diamond(\partial \Omega)
	\end{equation}
	and $0 \leq r \leq 1/2$; the proof of this fact is included for completeness as Lemma~\ref{lemma:inverse} below. In other words,
	\begin{equation*}
	\| f \|_{r,\varsigma_-}^2 \leq \| f \|_{r,\sigma}^2 \leq \| f \|_{r,\varsigma_+}^2
	\qquad {\rm for} \ {\rm all} \ f \in H^{r}_\diamond(\partial \Omega),
	\end{equation*}
	and thus the proof for $0 \leq r \leq 1/2$ can straightforwardly be completed by exchanging the roles of $\varsigma_-$ and $\varsigma_+$ in \eqref{eq:norm_equi_minus}. 
\end{proof}

We complete this appendix and the whole paper by presenting a lemma proving \eqref{eq:inverse}. This result could also be proved by directly employing monotonicity properties of the Dirichlet-to-Neumann operator and applying the L\"owner--Heinz inequality that remains valid for unbounded self-adjoint operators. Moreover, one could consider a map operating over a general Gelfand triple in place of $\Lambda^{2r}(\sigma)$ and the Sobolev spaces $H^{r}_\diamond(\partial \Omega) \hookrightarrow L^2_\diamond(\partial \Omega) \hookrightarrow H^{-r}_\diamond(\partial \Omega)$.

\begin{lemma}
	\label{lemma:inverse}
	Let $\sigma_1, \sigma_2 \in L^\infty_+(\Omega)$ be such that $\sigma_1 \leq \sigma_2$ almost everywhere in $\Omega$. Then
	\begin{equation}
	\label{eq:inverse2}   
	\langle \Lambda^{-2r}(\sigma_1) f, f \rangle
	\leq \langle \Lambda^{-2r}(\sigma_2) f, f \rangle
	\end{equation}
	for all $0\leq r \leq \tfrac{1}{2}$ and $f \in H^{r}_\diamond(\partial \Omega)$.
\end{lemma}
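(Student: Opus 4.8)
The plan is to derive \eqref{eq:inverse2} from the monotonicity relation \eqref{eq:monot} in two moves: first invert, then raise to the power $2r$ by the L\"owner--Heinz inequality, bearing in mind that negative powers of $\Lambda(\sigma)$ are unbounded, so that every operator inequality below is to be read in the sense of quadratic forms. Note first that $\langle \Lambda^{-2r}(\sigma)f,f\rangle = \|f\|_{r,\sigma}^2$ for $f\in H^r_\diamond(\partial\Omega)$, so that, by \eqref{eq:norm_equi} applied with a fixed conductivity (which involves no circularity with Theorem~\ref{thm:norm_equi}), both sides of \eqref{eq:inverse2} are finite precisely on $H^r_\diamond(\partial\Omega)$; it therefore suffices to establish the inequality there. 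The endpoint $r=0$ is trivial, so assume $0<r\le\tfrac12$, whence $0<2r\le 1$.

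First, restricting \eqref{eq:monot} to $f\in L^2_\diamond(\partial\Omega)\subset H^{-1/2}_\diamond(\partial\Omega)$ shows that $\Lambda(\sigma_2)\le\Lambda(\sigma_1)$ as bounded, positive, injective self-adjoint operators on $L^2_\diamond(\partial\Omega)$. I then pass to the (unbounded) inverses and claim $\Lambda^{-1}(\sigma_1)\le\Lambda^{-1}(\sigma_2)$, i.e.\ $\langle f,\Lambda^{-1}(\sigma_1)f\rangle\le\langle f,\Lambda^{-1}(\sigma_2)f\rangle$ for all $f$ in the common form domain $H^{1/2}_\diamond(\partial\Omega)$. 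This is exactly the classical monotonicity of the Dirichlet-to-Neumann operator and may be cited as such from \cite{Ikehata98,Kang97}; alternatively it follows directly from the previous step, since for $\lambda>0$ the inequality $0<\Lambda(\sigma_2)+\lambda I\le\Lambda(\sigma_1)+\lambda I$ between boundedly invertible operators yields $(\Lambda(\sigma_1)+\lambda I)^{-1}\le(\Lambda(\sigma_2)+\lambda I)^{-1}$ by the elementary order-reversal of inversion for bounded positive operators, and letting $\lambda\downarrow 0^+$, while the associated quadratic forms increase monotonically to those of $\Lambda^{-1}(\sigma_i)$, gives the claim by monotone convergence.

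Finally, I apply the L\"owner--Heinz inequality in the form valid for unbounded positive self-adjoint operators (as already pointed out in the paragraph preceding the statement): since $0\le 2r\le 1$ and $\Lambda^{-1}(\sigma_1)\le\Lambda^{-1}(\sigma_2)$, we obtain $\Lambda^{-2r}(\sigma_1)=(\Lambda^{-1}(\sigma_1))^{2r}\le(\Lambda^{-1}(\sigma_2))^{2r}=\Lambda^{-2r}(\sigma_2)$, again as a form inequality. Evaluating on $f\in H^r_\diamond(\partial\Omega)$, which is the common form domain of $\Lambda^{-2r}(\sigma_1)$ and $\Lambda^{-2r}(\sigma_2)$, this is precisely \eqref{eq:inverse2}. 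The only point that requires attention is the bookkeeping with unbounded operators — checking that the form domains of $\Lambda^{-2r}(\sigma_1)$ and $\Lambda^{-2r}(\sigma_2)$ coincide (they both equal $H^r_\diamond(\partial\Omega)$ by \eqref{eq:norm_equi}) and invoking the correct unbounded versions of the inversion inequality and of L\"owner--Heinz; no deeper difficulty arises.
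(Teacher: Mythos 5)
Your proof is correct, but it follows a genuinely different route from the one the paper actually writes out --- in fact, it is precisely the alternative that the paper sketches in the sentence preceding the lemma (``This result could also be proved by directly employing monotonicity properties of the Dirichlet-to-Neumann operator and applying the L\"owner--Heinz inequality that remains valid for unbounded self-adjoint operators''). You invert first, obtaining $\Lambda^{-1}(\sigma_1)\le\Lambda^{-1}(\sigma_2)$ as a form inequality on $H^{1/2}_\diamond(\partial\Omega)$ (your regularization $(\Lambda(\sigma_i)+\lambda I)^{-1}$ with $\lambda\downarrow 0^+$ is sound: by the spectral decomposition the forms $\sum_k(\lambda_k+\lambda)^{-1}|\langle f,\phi_k\rangle|^2$ increase monotonically to $\|f\|_{1/2,\sigma_i}^2$), and then take the power $2r\in[0,1]$ via the unbounded version of L\"owner--Heinz, with the form domains identified as $H^r_\diamond(\partial\Omega)$ through \eqref{eq:norm_equi} for a fixed conductivity --- correctly noted to be non-circular. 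The paper instead applies L\"owner--Heinz only to the \emph{bounded} operators $\Lambda(\sigma_i)$ on $L^2_\diamond(\partial\Omega)$ to get \eqref{eq:converse}, and then reverses the inequality at the level of the fractional powers by a congruence with $\Lambda^{\mp r}(\sigma_2)$ and a square-root argument, so that every operator in sight is a bounded isomorphism between spaces in the Gelfand triple. The trade-off: the paper's argument needs only the classical bounded-operator L\"owner--Heinz theorem it cites, at the cost of the slightly less transparent square-root manipulation; yours is the more direct ``invert, then take powers'' argument, at the cost of invoking the unbounded form version of L\"owner--Heinz and the attendant bookkeeping of form domains, which you do handle correctly.
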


\begin{proof}
	Due to \eqref{eq:monot} and the L\"owner--Heinz inequality,
	\begin{equation}
	\label{eq:converse}
	\langle g, \Lambda^{2r}(\sigma_1) g\rangle
	\geq \langle g, \Lambda^{2r}(\sigma_2) g \rangle \qquad {\rm for} \ {\rm all} \ g \in H^{-r}_\diamond(\partial \Omega).
	\end{equation}
	Since $\Lambda^{-r}(\sigma_2): L^2_\diamond(\partial \Omega) \to H^{-r}_\diamond(\partial \Omega)$ is an isomorphism and coincides with its dual on $H^r_\diamond(\partial\Omega)$, via the substitution $g = \Lambda^{-r}(\sigma_2)w$ for $w\in L^2_\diamond(\partial\Omega)$, it follows that \eqref{eq:converse} is equivalent to
	\begin{equation}
	\label{eq:pos_identity}
	\big \langle w, \Lambda^{-r}(\sigma_2) \Lambda^{2r}(\sigma_1) \Lambda^{-r}(\sigma_2) w \big\rangle \geq \| w \|_{L^2(\partial \Omega)}^2 \qquad {\rm for} \ {\rm all} \ w \in L^2_\diamond(\partial \Omega).
	\end{equation}
	In particular, $\Lambda^{-r}(\sigma_2) \Lambda^{2r}(\sigma_1) \Lambda^{-r}(\sigma_2): L^2_\diamond(\partial \Omega) \to L^2_\diamond(\partial \Omega)$ is a positive self-adjoint isomorphism, and thus it has a positive self-adjoint isomorphic square root $R: L^2_\diamond(\partial \Omega) \to  L^2_\diamond(\partial \Omega)$.
	
	For any $v \in L^2_\diamond(\partial \Omega)$, we may write
	\begin{align*}
	\big \langle \Lambda^{r}(\sigma_2) \Lambda^{-2r}(\sigma_1) \Lambda^{r}(\sigma_2) v, v \big \rangle =  \| R^{-1} v \|_{L^2(\partial \Omega)}^2 
	\leq
	\langle R^{-1} v,  R^2 R^{-1}v \rangle = \| v \|_{L^2(\partial \Omega)}^2,
	\end{align*}
	where the second step follows from \eqref{eq:pos_identity}. Employing the substitution $v = \Lambda^{-r}(\sigma_2) f$ for $f\in H^r_\diamond(\partial\Omega)$, this is equivalent to \eqref{eq:inverse2} and the proof is complete.
\end{proof}

\bibliographystyle{plain}
\bibliography{logL-refs}

\begin{thebibliography}{10}

\bibitem{Adler09}
A.~Adler, J.~H. Arnold, R.~Bayford, A.~Borsic, B.~Brown, P.~Dixon, T.~J.~C.
  Faes, I.~Frerichs, H.~Gagnon, Y.~G\"{a}rber, B.~Grychtol, G.~Hahn, W.~R.~B.
  Lionheart, A.~Malik, R.~P. Patterson, J.~Stocks, A.~Tizzard, N.~Weiler, and
  G.~K. Wolf.
\newblock {GREIT}:\ a unified approach to {2D} linear {EIT} reconstruction of
  lung images.
\newblock {\em Physiol. Meas.}, 30(6):S35--S55, 2009.

\bibitem{Borcea02}
L.~Borcea.
\newblock Electrical impedance tomography.
\newblock {\em Inverse Problems}, 18:R99--R136, 2002.

\bibitem{Borcea2002}
L.~Borcea.
\newblock Addendum to ``electrical impedance tomography".
\newblock {\em Inverse Problems}, 19:997--998, 2003.

\bibitem{Calderon80}
A.-P. {C}alder{\'o}n.
\newblock On an inverse boundary value problem.
\newblock In {\em Seminar on {N}umerical {A}nalysis and its {A}pplications to
  {C}ontinuum {P}hysics}, pages 65--73. Soc. Brasil. Mat., Rio de Janeiro,
  1980.

\bibitem{Cheney90}
M.~Cheney, D.~Isaacson, J.~C. Newell, S.~Simske, and J.~Goble.
\newblock {NOSER}: An algorithm for solving the inverse conductivity problem.
\newblock {\em Int. J. Imag. Syst. Tech.}, 2(2):66--75, 1990.

\bibitem{Cheney99}
M.~Cheney, D.~Isaacson, and J.C. Newell.
\newblock Electrical impedance tomography.
\newblock {\em SIAM Rev.}, 41:85--101, 1999.

\bibitem{Cheng89}
K.-S. Cheng, D.~Isaacson, J.~C. Newell, and D.~G. Gisser.
\newblock Electrode models for electric current computed tomography.
\newblock {\em IEEE Trans. Biomed. Eng.}, 36(9):918--924, 1989.

\bibitem{Dautray88}
R.~Dautray and J.~L. Lions.
\newblock {\em Mathematical Analysis and Numerical Methods for Science and
  Technology: Functional and Variational Methods}, volume~2.
\newblock Springer-Verlag, 1988.

\bibitem{Garde17}
H.~Garde and S.~Staboulis.
\newblock Convergence and regularization for monotonicity-based shape
  reconstruction in electrical impedance tomography.
\newblock {\em Numer. Math.}, 135:1221--1251, 2017.

\bibitem{Gilliam_2009}
D.~S. Gilliam, T.~Hohage, X.~Ji, and F.~Ruymgaart.
\newblock The {F}r{\'{e}}chet derivative of an analytic function of a bounded
  operator with some applications.
\newblock {\em Int.\ J.\ Math.\ Math.\ Sci.}, 2009:1--17, 2009.

\bibitem{Grisvard85}
P.~Grisvard.
\newblock {\em Elliptic Problems in Nonsmooth Domains}.
\newblock Pitman, 1985.

\bibitem{Harrach10}
B.~Harrach and J.~K. Seo.
\newblock Exact shape-reconstruction by one-step linearization in electrical
  impedance tomography.
\newblock {\em SIAM J. Math. Anal.}, 42(4):1505--1518, 2010.

\bibitem{Heinz51}
E.~Heinz.
\newblock Beitr\"age zur st\"orungstheorie der spektralzerlegung.
\newblock {\em Math. Ann.}, 123:415--438, 1951.

\bibitem{Hille_1952}
E.~Hille.
\newblock Une g\'{e}n\'{e}ralisation du probl\`eme de {C}auchy.
\newblock {\em Ann. Inst. Fourier Grenoble}, 4:31--48, 1952.

\bibitem{Hille_1957}
E.~Hille and R.~S. Phillips.
\newblock {\em Functional analysis and semi-groups}.
\newblock American Mathematical Society Colloquium Publications, vol. 31.
  American Mathematical Society, Providence, R. I., 1957.
\newblock rev. ed.

\bibitem{Hyvonen18}
N.~Hyv{\"o}nen and L.~Mustonen.
\newblock Generalized linearization techniques in electrical impedance
  tomography.
\newblock {\em Numer. Math.}, 140:95--120, 2018.

\bibitem{Ikehata98}
M.~Ikehata.
\newblock Size estimation of inclusion.
\newblock {\em J. Inverse Ill-Posed Problems}, 6:127–140, 1998.

\bibitem{Kang97}
H.~Kang, J.~Seo, and D.~Sheen.
\newblock The inverse conductivity problem with one measurement: Stability and
  estimation of size.
\newblock {\em SIAM J. Math. Anal.}, 28:1389--1405, 1997.

\bibitem{Lee1989}
J.~Lee and G.~Uhlmann.
\newblock Determining anisotropic real-analytic conductivities by boundary
  measurements.
\newblock {\em Comm. Pure Appl. Math.}, 42(2):1097--1112, 1989.

\bibitem{Pedersen_1972}
G.~K. Pedersen.
\newblock Some operator monotone functions.
\newblock {\em Proc.\ Amer.\ Math.\ Soc.}, 36(1):309--310, 1972.

\bibitem{Pedersen00}
G.~K. Pedersen.
\newblock Operator differentiable functions.
\newblock {\em Publ. RIMS, Kyoto Univ.}, 36:139--157, 2000.

\bibitem{Somersalo92}
E.~Somersalo, M.~Cheney, and D.~Isaacson.
\newblock Existence and uniqueness for electrode models for electric current
  computed tomography.
\newblock {\em SIAM J. Appl. Math.}, 52(4):1023--1040, 1992.

\bibitem{Uhlmann09}
G.~Uhlmann.
\newblock Electrical impedance tomography and {C}alder\'{o}n's problem.
\newblock {\em Inverse Problems}, 25(12):123011, 2009.

\end{thebibliography}

\end{document}